\numberwithin {equation} {section}
\newtheorem{theo}{Theorem}[section]
\newtheorem{lemma}[theo]{Lemma}
\newtheorem{corol}[theo]{Corollary}
\newtheorem{prop}[theo]{Proposition}
\theoremstyle{definition}
\newtheorem{remark}[theo]{Remark}
\newtheorem{example}[theo]{Example}
\newtheorem{defi}[theo]{Definition}
\newtheorem{note}[theo]{Note}
\newcommand{\Real}{\mathbb R}
\newcommand{\Kapa}{\mathbb K}
\newcommand{\Nat}{\mathbb N}
\newcommand{\norm}{\|\cdot\|}
\newcommand{\Lra}{\Longrightarrow}
\newcommand{\Ra}{\Rightarrow}
\newcommand{\sms}{\smallsetminus}
\newcommand{\mbx}{\mbox}
\newcommand{\rond}{\mathcal}
\newcommand{\inter}{\operatorname{int}}
\newcommand{\epi}{\operatorname{epi}} %
\newcommand{\bequ}{\begin{equation}} %
\newcommand{\eequ}{\end{equation}} %
\newcommand{\bequs}{\begin{equation*}} %
\newcommand{\eequs}{\end{equation*}} %
\newcommand{\vphi}{\varphi}
\newcommand{\epsic}{\varepsilon}
\newcommand{\W}{\Omega}
\newcommand{\Om}{\Omega}
\newcommand{\lra}{\longrightarrow}
\newcommand{\ba}{\begin{array}}
\newcommand{\ea}{\end{array}}
\newcommand{\ben}{\begin{enumerate}}
\newcommand{\een}{\end{enumerate}}
\newcommand{\ei}{\end{itemize}}
\newcommand{\bi}{\begin{itemize}}
\newcommand{\bc}{\begin{center}}
\newcommand{\ec}{\end{center}}
\newcommand{\bfr}{\begin{flushright}}
\newcommand{\efr}{\end{flushright}}
\newcommand{\f}{\frac}
\newcommand{\ov}{\overline}
\newcommand{\wtil}{\widetilde}
\newcommand{\lbda}{\lambda}
\begin{document}
\setcounter{page}{1}

\title{Lipschitz properties of  convex mappings}
\author{S. Cobza\c s}
\date{September   08, 2016}
\address{\it Babe\c s-Bolyai University, Faculty of Mathematics
and Computer Science, 400 084 Cluj-Napoca, Romania}\;\;
\email{scobzas@math.ubbcluj.ro}
\date{October 03, 2016}
\subjclass[2010]{Primary: 46N10; Secondary:  26A16, 26A51, 46A08, 46A16, 46A40,  46B40}
\keywords{convex function, convex operator, Lipschitz property,  ordered locally convex  space, cone, normal cone, normed lattice,
barrelled space, metrizale locally convex space, metric linear space}

\begin{abstract}
The present paper is  concerned with Lipschitz properties of convex mappings. One considers the general context of mappings defined on an open convex subset $\Omega$ of a locally convex space $X$  and taking values in a locally convex space $Y$  ordered by a normal cone.
One proves also equi-Lipschitz properties for   pointwise bounded families of continuous convex
mappings, provided the source space $X$ is barrelled.
 Some results  on Lipschitz properties  of continuous convex functions defined on metrizable topological vector spaces are included as well.

 The  paper has a methodological character - its aim is to show that some geometric properties (monotonicity of the slope, the normality of  the seminorms) allow to extend the proofs from the scalar case to the vector one. In this way the proofs become more transparent and natural.

\end{abstract}
\maketitle
\section{Introduction}
As it is well known  every convex function defined on an open
interval of the real axis is Lipschitz on each compact subinterval of its
domain of definition (see, e.g.,  \cite{Hardy-Lit-Pol},  Ch.3, \S18). This result can be extended to   convex functions defined
on convex open subsets of $\mathbb{R}^{n}$ - every such function is locally Lipschitz on $\W$ and Lipschitz on  every compact subset of $\W$.
Assuming the continuity of the convex function the result can be further extended to the case when $\W$ is an open convex subset of a normed space (see, e.g.,  \cite{Ekl-Tem74}), or of a locally convex space,  \cite{cob79}, \cite{co-mun76},  \cite{ma-mu96}, \cite{zali78} (see also \cite{Zali-CvAn}).

Convex mappings (convex operators, convex vector-functions), meaning mappings defined on a convex
subset of a vector space and with values in an ordered vector space, have
been intensively studied in the last years, mainly in connection with
optimization problems and mathematical programming in ordered vector spaces,
 see \cite{bor81}, \cite{bor82}, \cite{bor84}, \cite{nem86} and the monographs  \cite{Zali-Gopf}, \cite{KK95}. The normality of the cone is essential in the proofs of the continuity properties of convex vector-functions and, as it was remarked by Carioli and Vesel\'y \cite{vesely13}, the normality is, in some sense, also necessary for the validity of these properties (see Section \ref{S.cv-vect-fcs}).

Lipschitz properties of continuous convex vector functions defined on an open convex subset of a normed space and with values in a normed space ordered by a normal cone were proved in \cite{zali16} and \cite{papag83a}.

Equicontinuity results (Banach-Steinhaus type principles) for pointwise
bounded families of continuous convex mappings were proved in
\cite{k-s-w79}, \cite{neu85}. P. Kosmol \cite{Kos91} proved that a pointwise
bounded family of continuous convex mappings, defined on an open convex subset
$\Omega$ of a Banach space $X$ and with values in a normed space $Y$ ordered
by a normal cone, is locally equi-Lipschitz  on $\Omega.$ The case of
real-valued functions was considered in \cite{kos76}. M. Jouak and
L. Thibault \cite{jou-thi84} proved equicontinuity and equi-Lipschitz results
for families of continuous convex mappings defined on open convex subsets
of Baire topological vector spaces or of barrelled locally convex spaces and
taking values in a topological vector space respectively in a locally convex
space, ordered by a normal cone. New proofs of these results were given in \cite{cobz01a}. W. W.  Breckner and T. Trif \cite{br-tr99} extended  these results to
families of rationally $s$-convex functions. Condensation of singularities
principles for non-equicontinuous families of continuous convex mappings
have been proved in \cite{b-g-t96}.

 The present paper has a methodological character - its aim is to show that some geometric properties (monotonicity of the slope, the normality of  the seminorms) allow to extend the proofs from the scalar case to the vector one. In this way the proofs become more transparent and natural.

 \section{Ordered vector spaces and normal cones}\label{S.OVS}

 As we shall work with functions taking values in ordered vector spaces, we recall some notions and results on this topic. Details can be found in
 \cite{Alipr-Cones}, \cite{Hitchhick}, \cite{Breck} or \cite{Schaef-TVS}.

 A \emph{preorder}  on a nonempty set $S$ is a binary
relation on S, denoted $\leq $, which satisfies the following properties:

(O1) \;\;$s\leq s$, for all $s\in S$;

(O2)\;\; if $s\leq s^{^{\prime }}$ and $s^{^{\prime }}\leq s^{^{\prime \prime }}$, then $s\leq s^{^{^{\prime \prime }}}$,

The relation $\le $ is called an order if further

(O3)\;\;  $s\leq s^{^{\prime }}$ and $s^{^{\prime }}\leq s$ imply $s=s^{^{\prime }}$.

Two elements of $s,s'\in S$ are called \emph{comparable} if $s\le s'$ or $s'\le s$. If none of these relations hold, then the elements $s,s'\in S$ are called \emph{incomparable}. If any two elements  $s,s'\in S$ are comparable, then the set $S$ is called \emph{totally preordered} (resp. \emph{totally ordered}).

A \emph{cone} in a vector space $X$  is a nonempty subset $C$ of $X$ such that

$${\rm (C1)}\;\; C+C\subset C\quad \mbx{and}\quad {\rm (C2)}\;\; \Real_+C\subset C\,.$$

It is clear that a cone $C$ is a convex set and
$$
\alpha x+\beta y\in C\,,$$
for all $x,y\in C,$ and all $\alpha,\beta\ge 0$ in $\Real$.

The relation
\bequs
x\le_Cy\iff y-x\in C\,,
\eequs
is a \emph{vector  preorder} on $X$, i.e. a preorder satisfying  the conditions:

(OVS1)\;\; $x\le y\;\Ra x+z\le y+z$;

(OVS2) \;\; $x\le y\;\Ra tx\le ty$\,,\\
for all $x,y,z\in X$ and all $t\ge 0$.

Conversely,  if $X$ is a  vector space  equipped with a  preorder satisfying (OVS1) and (OVS2), then
$$
X_+:=\{x\in X : x\ge 0\}$$
is a cone in $X$, called the cone of positive elements, and the preorder $\le_{X_+}$ induced by $X_+$ agrees with $\le$.
A  vector preorder $\le_C$ induced by a cone $C$ is an   order if and only if the cone $C$ is \emph{pointed}, i.e.
$$
{\rm (C3)}\qquad C\cap(-C)=\{0\}\,.$$

\begin{remark}
  Some authors (see, e.g., \cite{Peressini})  use the term wedge to designate a nonempty set satisfying (C1) and (C2), and reserve the term cone for nonempty sets  satisfying (C1)--(C3).
\end{remark}

An \emph{order interval} in an ordered vector space $(X,C)$ is a (possibly empty) set of the form
\bequ\label{o-interv}
[x,y]_o=\{z\in X : x\le z\le y\}=(x+C)\cap(y-C),\eequ
for $x,y\in X.$  It is clear that an order interval $[x,y]_o$ is a convex subset of $X$ and that
$$
[x,y]_o=x+[0,y-x]_o. $$

The notation $[x,y]$ will be reserved for algebraic intervals: $$[x,y]:=\{(1-t) x+t y : t\in [0,1]\}.$$

If the elements $x,y$ are not comparable, then $[x,y]_o=\emptyset$.  If $x\le y,$ then $[x,y]\subset [x,y]_o,$ but the reverse inclusion could not hold as the following example shows.
    Taking $X=\mathbb{R}^2$ with the coordinate order and $x=(0,0),\, y=(1,1),$ then $[x,y]_o$ equals the (full) square with the vertices  $(0,0), \,(0,1),\, (1,1)$ and $(0,1),$ so it is larger  than the segment $[x,y].$

A subset $A$ of $X$ is called  \emph{full} (or \emph{order-convex}, or \emph{saturated}) if $[x,y]_o\subset A$ for all $x,y\in A.$ Since the intersection of an arbitrary  family of order--convex sets is order--convex, we can define the order--convex hull $[A]$ of a nonempty subset $A$ of $X$ as the intersection of all order-convex subsets of $X$ containing $A$, i.e. the smallest order--convex subset of $X$ containing $A$. It follows that
\begin{equation}\label{eq.o-cv-hull}
[A]=\bigcup\{[x,y]_o : x,y\in A\} =(A+C)\cap (A-C).
\end{equation}

Obviously, $A$ is order-convex iff $A=[A].$

An ordered vector space $X$ is called a \emph{vector lattice} (or a \emph{Riesz space}) if every pair $x,y\in X$ admits a supremum $x\vee y$. Since
$$
x\wedge y=-[(-x)\vee (-y)]\,,$$
it follows that every pair of elements in $X$ admits an infimum. The property extends to finite subsets of $X$, i.e. every such subset has an infimum and a supremum.

For $x\in X$ one defines
$$
x^+=x\vee 0,\quad x^-=(-x)\vee 0,\quad |x|=x\vee(-x)\,.$$

It follows
\bequ\label{eq1.v-latt}
\begin{aligned}
  & {\rm (i)}\;\; x=x^+-x^-\;\;\mbx{ and }\;\; x^+\wedge x^-=0,\quad |x|=x^++x^-,\quad |-x|=|x|; \\
  & {\rm (ii)}\;\; ||x|-|y||\le |x+y|\le |x|+|y|\,;\\
  &  {\rm (iii)}\;\; |x|\le a\iff (x\le a\;\mbx{and}\; -x\le a)  \;\mbx{ for any}\; a\ge 0\,;\\
  & {\rm (iv)}\;\; |x|\vee |y|=\frac12\big[|x+y|+|x-y|\big]\quad\mbx{and}\quad |x|\wedge |y|=\frac12\big||x+y|-|x-y|\big|\,;\\
  & {\rm (v)}\;\; x\le y\le z\;\Ra\; |y|\le |x|\vee |z|\,.
\end{aligned}\eequ

We prove only the last assertion (v) from above which will be used in the proof of Theorem \ref{t.o-Lip} (see also Remark \ref{re.o-Lip}).
The others can be found in every book on ordered vector spaces (see, for instance, \cite[Th. 1.17]{Alipr-Cones} or \cite[p. 318]{Hitchhick}).

Observe that
$$
x\le y\le z\;\Ra\; 0\le y-x\le z-x\,.$$

By (iv),
\begin{align*}
|x|\vee |z| &= \frac12\big[|z+x|+|z-x|\big]=\frac12\big[|z+x|+z-x\big]\\
&\ge \frac12\big[z+x+y-x\big]=\frac12\big[z+y\big]\ge y\,.
\end{align*}

Since
$$
x\le y\le z\;\Ra\; -z\le -y\le -x\,,$$
it follows
$$
|x|\vee |z|=|-x|\vee |-z|\ge -y\,,$$
implying  $|y|\le|x|\vee |z|$.

In fact, the following general principles hold in vector lattices (\cite[Th. 8.6 and Corollary 8.7, p. 318]{Hitchhick}).
\begin{theo}\label{t1.vect-latt} \hfill
\ben\item[\rm 1.] Every lattice identity that is true for real numbers is also true in
every Archimedean Riesz space.
\item[\rm 2.] If a lattice inequality is true for real numbers, then it is true in
any Riesz space.
\een\end{theo}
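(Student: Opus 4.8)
The plan is to treat the two parts separately, because part~2 is the substantial one and part~1 will in fact follow from it (even without the Archimedean hypothesis, which I retain only because that is how the classical statement in \cite{Hitchhick} is recorded and because it makes available the representation argument sketched at the end). For part~2 the idea is to reduce an arbitrary inequality between Riesz-space terms to a finite convex-geometric fact in $\Real^n$.

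The first step is a \emph{normal form} for terms: every term $t(x_1,\dots,x_n)$ built from the variables by means of addition, multiplication by real scalars, $\vee$ and $\wedge$ can be rewritten, \emph{using only identities valid in every vector lattice}, as $\bigvee_i\bigwedge_j\ell_{ij}$ with each $\ell_{ij}=\sum_k c_{ijk}x_k$ a linear form with real coefficients, and dually also in the ``conjunctive'' form $\bigwedge_i\bigvee_j m_{ij}$. This is a routine structural induction: one pushes the vector operations inward using the compatibility of addition and of scalar multiplication with $\vee$ and $\wedge$ (with the usual sign change for negative scalars), and rearranges the lattice operations by the distributive law $a\wedge(b\vee c)=(a\wedge b)\vee(a\wedge c)$ together with its finite iterate $\bigwedge_{i=1}^p\bigvee_j m_{ij}=\bigvee_\sigma\bigwedge_{i=1}^p m_{i\sigma(i)}$, the join taken over all choice functions $\sigma$; all of these hold in every vector lattice (see, e.g., \cite{Alipr-Cones}).

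The heart of the matter is the following lemma: if $a_1,\dots,a_p,b_1,\dots,b_q\in\Real^n$ satisfy $\min_i\langle a_i,x\rangle\le\max_j\langle b_j,x\rangle$ for all $x\in\Real^n$, then in \emph{every} Riesz space $E$ one has $\bigwedge_i\big(\sum_k a_{ik}x_k\big)\le\bigvee_j\big(\sum_k b_{jk}x_k\big)$ for all $x_1,\dots,x_n\in E$. To prove it one shows first that $\conv\{a_1,\dots,a_p\}$ and $\conv\{b_1,\dots,b_q\}$ intersect: if these compact convex subsets of $\Real^n$ were disjoint, strict separation would produce $x^\ast$ with $\max_i\langle a_i,x^\ast\rangle<\min_j\langle b_j,x^\ast\rangle$, and then at $x=-x^\ast$ we would have $\min_i\langle a_i,x\rangle>\max_j\langle b_j,x\rangle$, contradicting the hypothesis. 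Picking $c$ in the intersection and writing it both as $c=\sum_i\mu_i a_i$ and as $c=\sum_j\nu_j b_j$ with convex weights $\mu_i,\nu_j$, we get in $E$ --- from $\bigwedge_m(a_m\cdot\vec x)\le a_i\cdot\vec x$ and $b_j\cdot\vec x\le\bigvee_m(b_m\cdot\vec x)$, where $a_i\cdot\vec x:=\sum_k a_{ik}x_k$ --- the chain $\bigwedge_m(a_m\cdot\vec x)\le c\cdot\vec x\le\bigvee_m(b_m\cdot\vec x)$. Granting the lemma, an inequality $s\le t$ valid on $\Real^n$ is settled by putting $s$ in disjunctive and $t$ in conjunctive normal form: since $\bigvee_i u_i\le w$ iff $u_i\le w$ for all $i$, and $v\le\bigwedge_j w_j$ iff $v\le w_j$ for all $j$, the inequality $s\le t$ breaks into finitely many instances of the shape $\bigwedge_j\ell_j\le\bigvee_k m_k$, each of which remains valid on $\Real^n$ and hence, by the lemma, valid in $E$.

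Part~1 is then immediate: an identity $s=t$ valid on $\Real^n$ gives $s\le t$ and $t\le s$ on $\Real^n$, so by part~2 both hold in every Riesz space, whence $s=t$ by antisymmetry of the order. (The classical route, which is where the Archimedean hypothesis genuinely enters, runs instead as follows: given $x_1,\dots,x_n$ in an Archimedean Riesz space $E$, pass to the principal ideal $E_u$ generated by $u=|x_1|\vee\dots\vee|x_n|$; it is an Archimedean Riesz space with order unit $u$, so by the Kakutani--Krein representation it embeds as a Riesz subspace of some $C(K)$, $K$ compact Hausdorff, where all lattice and vector operations are pointwise; an identity true for real numbers then holds pointwise, hence in $C(K)$, hence in $E_u$, hence for $x_1,\dots,x_n$ in $E$.) The step I expect to be the real obstacle is the geometric lemma --- specifically, recognizing that the scalar hypothesis forces the two convex hulls to meet and that any common point is simultaneously a convex combination of the $a_i$ and of the $b_j$; the normal-form reduction is mechanical though a little tedious, and the representation theorem invoked in the alternative proof of part~1 is standard.
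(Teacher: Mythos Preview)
Your argument is correct. The paper, however, does not actually prove this theorem: it merely cites \cite[Th.~8.6 and Corollary~8.7]{Hitchhick} and offers a one-line justification, namely that ``every Archimedean Riesz space is lattice isomorphic to an appropriate function space with the order defined pointwise.'' So the paper's ``proof'' is just the representation-theoretic route that you mention as an alternative at the end of your treatment of part~1.

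Your main argument for part~2 --- the normal-form reduction followed by the convex-separation lemma --- is genuinely different from the representation approach hinted at in the paper, and it has two advantages worth noting. First, it is completely elementary and self-contained: it uses only finite-dimensional separation of convex sets and the distributive law in a vector lattice, whereas the representation route invokes the Kakutani--Krein theorem. Second, it works in \emph{any} Riesz space, not just Archimedean ones, which is exactly what part~2 claims; the representation argument as stated in the paper (and in your parenthetical alternative) genuinely needs the Archimedean hypothesis to embed into a $C(K)$. Conversely, the representation argument is shorter once one is willing to quote the structure theorem, and it explains conceptually \emph{why} the result holds. Your observation that part~1 follows from part~2 without Archimedeanness is also correct and is a small sharpening over what the paper's one-line explanation would give.
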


This is due to the fact that every Archimedean Riesz space is lattice isomorphic  to an appropriate function space with the order defined pointwise.

By a lattice equality (inequality) in $\Real$ one understand an equality (inequality) expressed in terms of  the order, the order operations $\sup, \inf$ and the algebraic operations with real numbers.

In the case of an ordered topological vector space (TVS) $(X,\tau)$ some connections between order and topology hold. Let  $(X,\tau)$ be a TVS with a preorder, or an  order, $\,\le\,$ generated by a cone  $C.$

We start by a simple result.
\begin{prop}\label{p1.order-tvs}
The cone   $C$ is closed if and only if the   inequalities are preserved by limits, meaning that  for all nets $(x_i : i\in I),\, (y_i : i\in I) $ in $X,$
$$
(\forall\,  i\in I,\; x_i\le y_i \; \;\;\mbox{and}\;\; \lim_ix_i=x,\, \lim_iy_i=y)\; \Longrightarrow \; x\le y.$$
\end{prop}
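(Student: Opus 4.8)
The plan is to exploit the translation of the order into membership in $C$, namely $x\le y\iff y-x\in C$, together with the elementary fact that in a topological space a set is closed if and only if it contains the limit of every convergent net in it, and the continuity of the algebraic operations in a TVS.

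For the implication ``$C$ closed $\Rightarrow$ inequalities preserved by limits'', I would start from nets $(x_i),(y_i)$ with $x_i\le y_i$ for all $i\in I$ and $\lim_i x_i=x$, $\lim_i y_i=y$. By definition $z_i:=y_i-x_i\in C$ for every $i$. Since subtraction $(u,v)\mapsto u-v$ is continuous on $X\times X$, the net $(z_i)$ converges to $y-x$. As $C$ is closed and contains every $z_i$, it contains the limit, so $y-x\in C$, i.e. $x\le y$.

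For the converse, I would verify directly that $C$ is closed by checking it contains the limit of any convergent net of its elements. Given a net $(c_i)$ in $C$ with $\lim_i c_i=c$, set $x_i:=0$ and $y_i:=c_i$; then $x_i\le y_i$ for all $i$ because $y_i-x_i=c_i\in C$, and $\lim_i x_i=0$, $\lim_i y_i=c$. The hypothesis that inequalities pass to the limit yields $0\le c$, that is $c=c-0\in C$. Hence $C$ is closed.

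I do not expect a genuine obstacle here: both directions are immediate once one records that (i) membership in $C$ encodes the inequality, (ii) nets detect closedness, and (iii) subtraction is continuous in a TVS. The only point worth stating carefully is the use of nets (rather than sequences) for the closedness criterion, since $X$ need not be metrizable; this is exactly why the statement is phrased in terms of nets.
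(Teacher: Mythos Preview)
Your argument is correct in both directions and is exactly the standard proof. Note that the paper itself does not supply a proof for this proposition---it is stated as a ``simple result'' and left to the reader---so there is nothing to compare against; your write-up fills that gap appropriately.
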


Other results are contained in the following proposition.

\begin{prop}[\cite{Alipr-Cones}, Lemmas 2.3 and 2.4]\label{p2.order-tvs}
  Let $(X,\tau)$  be a TVS ordered by a $\tau$-closed cone $C$. Then
  \begin{enumerate}
\item[{\rm 1.}]   The topology $\tau$ is Hausdorff.
\item[{\rm 2.}]  The cone $K$ is Archimedean.
\item[{\rm 3.}]  The order intervals are $\tau$-closed.
\item[{\rm 4.}]  If $(x_i:i\in I)$ is an increasing net which is $\tau$-convergent to $x\in X$, then $x=\sup_ix_i.$
\item[{\rm 5.}] Conversely, if the topology $\tau$ is Hausdorff, $\;\inter(K)\ne\emptyset$ and $K$ is Archimedean, then $K$ is $\tau$-closed.
\end{enumerate}
\end{prop}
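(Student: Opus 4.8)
The plan is to derive the first four assertions directly from the hypothesis that $C$ is $\tau$-closed, using that translations $y\mapsto a+y$ and positive dilations $y\mapsto ty$ (for $t>0$) are homeomorphisms of $X$, together with Proposition \ref{p1.order-tvs}; the genuinely substantive statement is the converse, assertion 5, which I would treat last. Throughout I identify the cone $K$ of assertions 2, 4, 5 with the closed cone $C$ of the hypothesis.

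For assertion 1, I would observe that since the relation induced by $C$ is a genuine order, its cone is pointed, so $\{0\}=C\cap(-C)$. As $C$ is $\tau$-closed, so is $-C$, hence $\{0\}$ is $\tau$-closed; a topological vector space whose singleton $\{0\}$ is closed is Hausdorff, which gives 1. For assertion 2, I would unwind the definition of the Archimedean property: if $nx\le y$ for all $n\in\Nat$, then $y-nx\in C$, and scaling by $1/n>0$ yields $\tfrac1n y-x\in C$; as $n\to\infty$ the vectors $\tfrac1n y-x$ converge to $-x$, and closedness of $C$ forces $-x\in C$, i.e. $x\le 0$. Assertion 3 is immediate from the representation \eqref{o-interv}, namely $[a,b]_o=(a+C)\cap(b-C)$: each of $a+C$ and $b-C$ is a translate of a closed (respectively, reflected closed) set, hence closed, and so is their intersection.

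For assertion 4, let $(x_i)_{i\in I}$ be increasing with $x_i\to x$. To see that $x$ is an upper bound, I would fix $j\in I$ and note that for $i\ge j$ one has $x_i-x_j\in C$; the tail $(x_i-x_j)_{i\ge j}$ still converges to $x-x_j$, so closedness of $C$ gives $x-x_j\in C$, i.e. $x_j\le x$. To see that $x$ is the least upper bound, I would take any upper bound $u$, so that $u-x_i\in C$ for every $i$; since $u-x_i\to u-x$, closedness of $C$ again yields $u-x\in C$, i.e. $x\le u$. Hence $x=\sup_i x_i$.

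The main work is assertion 5, where closedness is the conclusion and therefore may not be invoked. Fixing an interior point $e\in\inter(C)$, the key auxiliary fact I would establish is that $\overline{C}+\inter(C)\subseteq C$: given $x\in\overline{C}$ and $v\in\inter(C)$, choose a balanced neighbourhood $V$ of $0$ with $v+V\subseteq C$; since $x\in\overline{C}$ there is $c\in C$ with $x-c\in V$, whence $x+v=c+\bigl(v+(x-c)\bigr)\in C+C\subseteq C$. Applying this with $v=\tfrac1n e\in\inter(C)$ gives $x+\tfrac1n e\in C$, and multiplying by $n>0$ (using that $C$ is a cone) gives $nx+e\in C$, that is $n(-x)\le e$ for every $n\in\Nat$. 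The Archimedean hypothesis now forces $-x\le 0$, i.e. $x\in C$; thus $\overline{C}\subseteq C$ and $C$ is closed. The delicate point, and the reason the argument is not circular, is precisely that the concluding step must use the Archimedean property — not the closedness of $C$ — to pass from $x+\tfrac1n e\in C$ to $x\in C$; I would also double-check the exact role of the Hausdorff hypothesis, which at the least guarantees that the order induced by $C$ is genuinely antisymmetric.
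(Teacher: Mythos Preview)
Your proof is correct and follows the standard route (essentially the one in the cited reference \cite{Alipr-Cones}). Note, however, that the paper itself does not supply a proof of this proposition: it is stated with a citation to \cite{Alipr-Cones}, Lemmas 2.3 and 2.4, and no argument is given in the text, so there is nothing in the paper to compare your approach against beyond the reference. Your treatment of assertion~5 via the inclusion $\overline{C}+\inter(C)\subseteq C$ followed by the Archimedean property is exactly the classical argument; your flagged uncertainty about the precise role of the Hausdorff hypothesis is reasonable, as in your argument it is not explicitly invoked (the Archimedean step is purely order-theoretic), though it is part of the hypotheses in the cited source.
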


\begin{note} In what follows a cone in a TVS will be always supposed to be closed.
  \end{note}

Let $(X,\tau)$ be a locally convex space ordered by a closed  cone $C$.

The cone $C$ is called {\it normal} if the space $X$ admits
a neighborhood basis at the origin   formed of  $C$-full sets. It can be
 shown that in this case $Y$ admits a basis of 0-neighborhoods formed of
 absolutely convex $C$-full sets (see \cite[V.3.1]{Schaef-TVS}).

 A seminorm $p$ on a vector space $X$ is called:
\begin{itemize}

\item \; $\gamma$-\emph{monotone} if $0\le x\le y\;\Longrightarrow\; p(x)\le \gamma p(y);$
\item \; $\gamma$-\emph{absolutely monotone} if $-y\le x\le y\;\Longrightarrow\; p(x)\le \gamma p(y);$
\item \; $\gamma$-\emph{normal} if $x\le z\le y\;\Longrightarrow\; p(z)\le \gamma\max\{p(x),p(y)\}.$  \end{itemize}

 The following characterizations of normal cones hold.
 \begin{theo}[\cite{Breck},  \cite{Schaef-TVS}]\label{t2.char-normal-cone}
 Let $(X,\tau)$ be a  LCS ordered by a cone $C.$ The following are equivalent.
   \begin{enumerate}
\item[{\rm 1.}] The cone $C$ is normal.
\item[\rm 2.] The LCS $X$ admits  a basis of 0-neighborhoods formed of  $C$-full  absolutely convex sets.
\item[{\rm 3.}] There exist $\gamma>0$ and a family   of $\gamma$-normal seminorms generating the topology $\tau$ of $X$.
\item[{\rm 4.}] There exist $\gamma>0$ and a family   of $\gamma$-monotone seminorms generating the topology $\tau$ of $X$.
\item[{\rm 5.}] There exist $\gamma>0$ and a family   of $\gamma$-absolutely monotone  seminorms generating the topology $\tau$ of $X$.
    \end{enumerate}

 All the above equivalences hold also with $\gamma =1$ in all places.
 \end{theo}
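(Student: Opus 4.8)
The plan is to prove the cycle $(1)\Rightarrow(2)\Rightarrow(3)\Rightarrow(5)\Rightarrow(4)\Rightarrow(1)$, arranged so that $(2)\Rightarrow(3)$ already yields the constant $\gamma=1$ and the implications $(3)\Rightarrow(5)\Rightarrow(4)$ carry the same constant along; since ``$\gamma=1$'' is a particular instance of ``$\exists\,\gamma>0$'', the single implication $(4)\Rightarrow(1)$, proved for an arbitrary $\gamma$, then closes both the general circle and its $\gamma=1$ refinement at once. First I would record the working form of normality: by \eqref{eq.o-cv-hull} the order-convex hull $[A]=(A+C)\cap(A-C)$ is monotone in $A$ and satisfies $A\subseteq[A]$, so $C$ is normal exactly when for every $0$-neighborhood $V$ there is a $0$-neighborhood $U$ with $[U]\subseteq V$ (for one direction take $U=V$ when $V$ is $C$-full; for the other, the sets $[U]$ with $U$ a $0$-neighborhood form a base of $C$-full $0$-neighborhoods).

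The step $(1)\Rightarrow(2)$ is the one I expect to be the main obstacle, since it requires upgrading arbitrary full neighborhoods to absolutely convex full ones. The key fact is that the order-convex hull of an absolutely convex set $U$ is again absolutely convex: $U+C$ and $U-C$ are convex, hence so is $[U]$, and if $z=u_1+c_1=u_2-c_2\in[U]$ with $u_i\in U$, $c_i\in C$, then for $|\alpha|\le1$ one writes $\alpha z$ using whichever of the two representations gives the correct sign on the cone term to see that $\alpha z\in(U+C)\cap(U-C)$. Then, given an absolutely convex $0$-neighborhood $V$, I choose (via the working form of normality) a $0$-neighborhood $W$ with $[W]\subseteq V$ and an absolutely convex $0$-neighborhood $U\subseteq W$; the set $[U]$ is absolutely convex, $C$-full, a $0$-neighborhood, and contained in $[W]\subseteq V$, so such sets form the desired base.

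For $(2)\Rightarrow(3)$ I take a base $\mathcal B$ of absolutely convex $C$-full $0$-neighborhoods and let $p_U$ be the Minkowski gauge of $U\in\mathcal B$; these seminorms generate $\tau$. If $x\le z\le y$ and $t>\max\{p_U(x),p_U(y)\}$, then $x/t,y/t\in U$ and $x/t\le z/t\le y/t$, so $C$-fullness of $U$ gives $z/t\in U$; letting $t$ decrease, $p_U(z)\le\max\{p_U(x),p_U(y)\}$, i.e.\ $p_U$ is $1$-normal. The implications $(3)\Rightarrow(5)\Rightarrow(4)$ are then routine seminorm bookkeeping keeping the constant fixed: a $\gamma$-normal $p$ is $\gamma$-absolutely monotone because $-y\le x\le y$ forces $p(x)\le\gamma\max\{p(-y),p(y)\}=\gamma p(y)$; and a $\gamma$-absolutely monotone $p$ is $\gamma$-monotone because $0\le x\le y$ implies $-y\le 0\le x\le y$, hence $-y\le x\le y$ and $p(x)\le\gamma p(y)$.

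Finally, for $(4)\Rightarrow(1)$, let $\{p_i\}$ be a generating family of $\gamma$-monotone seminorms and let $V=\{x:p_i(x)\le1,\ i\in F\}$ be a basic $0$-neighborhood with $F$ finite. If $a\le z\le b$ and $a,b\in V$, then $0\le z-a\le b-a$, so $p_i(z-a)\le\gamma\,p_i(b-a)\le 2\gamma$ and $p_i(z)\le2\gamma+1$ for each $i\in F$; hence $[V]\subseteq(2\gamma+1)V$. Rescaling and using $[\lambda A]=\lambda[A]$ for $\lambda>0$, the $0$-neighborhood $(2\gamma+1)^{-1}V$ satisfies $[(2\gamma+1)^{-1}V]\subseteq V$, and since these rescaled sets again form a $0$-neighborhood base, the working form of normality gives that $C$ is normal. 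This completes the cycle; because $(2)\Rightarrow(3)$ produced $1$-normal seminorms and $(3)\Rightarrow(5)\Rightarrow(4)$ preserved the constant, the statements $(3)$, $(4)$, $(5)$ also hold with $\gamma=1$.
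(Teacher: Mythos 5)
Your proof is correct, but note that the paper does not actually prove this theorem: it is stated with attributions to Breckner and to Schaefer--Wolff and used as a black box, so you have supplied a self-contained argument where the paper relies on references. The one fragment of the circle that does appear in the paper is Lemma \ref{le.Mink-fcs}, which shows that the Minkowski functional of a $C$-full absolutely convex absorbing set satisfies $q(y)\le\max\{q(x),q(z)\}$ for $x\le y\le z$; this is exactly your step $(2)\Rightarrow(3)$ with $\gamma=1$, proved by the same rescaling argument. The remaining steps you give are sound: the reformulation of normality via the hull operator $[U]=(U+C)\cap(U-C)$, the observation that $[U]$ inherits absolute convexity from $U$ (your sign-selection between the two representations $z=u_1+c_1=u_2-c_2$ is the right device), the trivial constant-preserving implications $(3)\Rightarrow(5)\Rightarrow(4)$, and the estimate $[V]\subseteq(2\gamma+1)V$ for a basic neighborhood built from finitely many $\gamma$-monotone seminorms, which closes the cycle. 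Arranging the cycle so that $(2)\Rightarrow(3)$ manufactures the constant $1$ and then observing that $\gamma=1$ is a special case of $\exists\gamma>0$ is a clean way to get the final assertion about $\gamma=1$ for free; this matches what one finds in the cited sources, so your route is the standard one even though the paper itself omits it.
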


A subset $Z$ of a topological vector space $(X,\tau)$ is called \emph{bounded } (or \emph{topologically bounded}) if it is absorbed by every neighborhood of 0, i.e. for every neighborhood $V$ of 0, there exists $\lbda >0$ such that $\lbda Z\subset V$.

If $X$ is a locally convex space with the topology generated by a family $P$ of seminorms, then $Z\subset X$ is topologically bounded if and only if
$$
\sup\{p(z) : z\in Z\}<\infty\,,$$
for every $p\in P$. If, further, $X$ is a normed space, then $Z$ is topologically bounded if and only if
$$
\sup\{\|z\| : z\in Z\}<\infty\,.$$

A subset $Z$ of a vector space $(X,\le)$ ordered by a cone $C$ is called \emph{upper (lower) $o$-bounded} ($o$ comes from ``order") if there exists $y\in X$ such that $z\le y$ (resp. $ y\le z$) for all $z\in Z$, where $\le=\le_C$ is the order generated by the cone $C$. It is called $o$-\emph{bounded} if it is both upper and lower bounded, i.e. there exist $x,y\in X$ such that $Z\subset [x,y]_o$, where  $[x,y]_o$ denotes the order interval determined by $x$ and $y$ (see \eqref{o-interv}).

We mention the following result.
\begin{prop}\label{p.o-bd}
  Let $(X,\tau)$ be a topological vector space ordered by a cone $C$.
  \ben
  \item[\rm 1.] If the cone $C$ is normal, then every $o$-bounded subset of $X$ is topologically bounded.
   \item[\rm 2.] If $X$ is a Banach  space ordered by a closed  cone $C$  such that every order interval in $X$ is topologically bounded, then the cone $C$ is normal.
 \een \end{prop}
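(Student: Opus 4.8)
To prove Proposition \ref{p.o-bd} I would handle the two assertions separately, the first being essentially immediate and the second requiring a small construction.

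For assertion 1, assume $C$ is normal and let $Z\subset[x,y]_o$ be $o$-bounded. By Theorem \ref{t2.char-normal-cone} the topology of $X$ is generated by a family $P$ of $\gamma$-normal seminorms (one may even take $\gamma=1$). Fix $p\in P$; for every $z\in Z$ we have $x\le z\le y$, hence $p(z)\le\gamma\max\{p(x),p(y)\}$ by the very definition of a $\gamma$-normal seminorm, so $\sup_{z\in Z}p(z)\le\gamma\max\{p(x),p(y)\}<\infty$. Since this holds for every $p\in P$, $Z$ is topologically bounded. This part presents no real difficulty.

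For assertion 2 I would argue by contradiction, after translating normality into a quantitative statement about the norm. Using Theorem \ref{t2.char-normal-cone} (together with the fact that in the normed space $X$ the topology is given by the single norm $\|\cdot\|$, and that a $C$-full absolutely convex $0$-neighborhood squeezed between a ball of radius $\delta$ and the unit ball forces $\|x\|\le\frac2\delta\|y\|$ whenever $0\le x\le y$), one sees that $C$ is normal if and only if there is $\gamma>0$ with $0\le x\le y\Rightarrow\|x\|\le\gamma\|y\|$. So assume $C$ is \emph{not} normal. Then for each $n\in\Nat$ this inequality fails with $\gamma=4^n$: there are $u_n,v_n\in X$ with $0\le u_n\le v_n$ and $\|u_n\|>4^n\|v_n\|$ (in particular $v_n\ne0$). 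Set $w_n:=(2^n\|v_n\|)^{-1}v_n$ and $z_n:=(2^n\|v_n\|)^{-1}u_n$; by (OVS2), $0\le z_n\le w_n$, while $\|w_n\|=2^{-n}$ and $\|z_n\|>2^n$.

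Since $\sum_n\|w_n\|<\infty$ and $X$ is complete, $y:=\sum_{n\ge1}w_n$ exists in $X$. For each $m$ the tails $\sum_{k=m+1}^n w_k$ lie in $C$ and tend, as $n\to\infty$, to $y-\sum_{k=1}^m w_k$; since $C$ is closed, $\sum_{k=1}^m w_k\le y$. Hence $0\le z_m\le w_m\le\sum_{k=1}^m w_k\le y$ for every $m$, so $\{z_m:m\in\Nat\}\subset[0,y]_o$. By hypothesis $[0,y]_o$ is topologically bounded, hence norm-bounded, which contradicts $\|z_m\|>2^m\to\infty$; therefore $C$ is normal. The only delicate point is the bookkeeping in this last part: the defect pairs must be chosen with a ratio growing fast enough (here $4^n$) so that, after dividing by $2^n\|v_n\|$, the $w_n$ become norm-summable — which is exactly where the completeness of $X$ is used, namely to produce the dominating element $y$ — while the $z_n$ still have norms blowing up. Everything else is the routine interplay of closedness of $C$ (to pass the inequality $\sum_{k\le m}w_k\le y$ to the limit) with the boundedness hypothesis on order intervals.
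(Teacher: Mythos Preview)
Your proof is correct and takes essentially the same approach as the paper. For part~1 the paper works directly with the $C$-full neighborhood definition of normality rather than the seminorm characterization of Theorem~\ref{t2.char-normal-cone}, and for part~2 (deferred in the paper to Step~1 of Theorem~\ref{t.Vesely}) it builds the same kind of summable series from the failure of normality, with only cosmetic differences in the chosen growth rate ($3^n$ versus your $4^n$) and in how the unbounded sequence is placed inside the order interval $[0,w]_o$.
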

  \begin{proof}
     1.
    Suppose that the cone $C$ is normal and let $Z$ be an $o$-bounded subset of $X$. Then there exist $x,y\in  X$ such that $Z\subset [x,y]_o$. Let $V$ be a $C$-full neighborhood of $0\in X$. Since $V$ is absorbing, there exists $\lbda >0$ such that $\lbda x,\lbda y\in V$. It follows $[\lbda x,\lbda y]_o\subset[V]=V$, so that $\lbda Z\subset [\lbda x,\lbda y]_o \subset  V\,.$

    A proof of 2 is given in Step 1 of the proof of Theorem \ref{t.Vesely}.
  \end{proof}

\section{Some properties of convex vector-functions}

 We  consider now convex mappings  from a more general point of view, meaning mappings with values in an ordered vector space which are convex with respect to the  vector order and give some simple results that are  essential for the proofs in the following sections.

 Let $\,X,Y\,$ be real vector spaces and suppose that $Y$ is ordered by a
  cone $C$. If $\Omega$ is a convex subset of $X$, then a mapping
 $\,f:\Omega \to Y\,$ is called {\it convex} (or a {\it convex operator}, or $C$-\emph{convex})
 provided

 \begin{equation}\label{e1}
 f((1-\alpha)x_1+\alpha x_2) \leq
 (1-\alpha) f(x_1) + \alpha f(x_2)
 \end{equation}
 for all $\,x_1, x_2 \in \Omega\,$ and  $\,\alpha \in [0,1],\,$
 where $\, \leq := \leq _C\,$ stands for
 the order induced by the cone $C,\;\,  x\le_Cy\iff y-x\in C$.

The following results are well known in the case of real-valued convex functions.

\begin{prop}\label{p.cv-fcs2} Let $I$ be an interval in $\Real$, $Y$ a vector space  ordered by a
  cone $C$ and  $\vphi:I\to Y$ a $C$-convex function.
\ben\item[{\rm  1.}]  The following equivalent inequalities hold:
\bequ\label{eq2.cv-fcs2}
\begin{aligned}
&{\rm (a)}\quad \vphi(t_2)\leq\frac{t_3-t_2}{t_3-t_1}\,\vphi(t_1)\,+\, \frac{t_2-t_1}{t_3-t_1}\,\vphi(t_3)\,,\\
&{\rm (b)}\quad \frac{\vphi(t_2)-\vphi(t_1)}{t_2-t_1}\leq \frac{\vphi(t_3)-\vphi(t_1)}{t_3-t_1}\,,  \\
&{\rm (c)}\quad \frac{\vphi(t_3)-\vphi(t_1)}{t_3-t_1}\leq \frac{\vphi(t_3)-\vphi(t_2)}{t_3-t_2}\,,  \\
&{\rm (d)}\quad \frac{\vphi(t_2)-\vphi(t_1)}{t_2-t_1}\leq \frac{\vphi(t_3)-\vphi(t_2)}{t_3-t_2}\,,
\end{aligned}
\eequ
where $\, \leq := \leq _C\,$ is the order induced by the cone $C$.

\item[\rm 2.]
For $t_0\in I$ fixed, the slope of $\vphi$ at $t_0$, defined by
$$
\Delta_{t_0}(\vphi)(t)=\frac{\vphi(t)-\vphi(t_0)}{t-t_0},\quad t\in I\sms \{t_0\}\,,
$$
is an increasing function of $t$, i.e.
\bequ\label{eq1.slope}
\frac{\vphi(t)-\vphi(t_0)}{t-t_0}\le\frac{\vphi(t')-\vphi(t_0)}{t-t_0}\,,\eequ
for all $t,t'\in I\sms\{t_0\}$ with $t<t'$.
\een
\end{prop}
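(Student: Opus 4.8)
For part 1 the plan is to read off inequality (a) directly from the convexity relation \eqref{e1} and then observe that (a)--(d) are equivalent reformulations obtained only by adding fixed vectors and multiplying/dividing by strictly positive reals. Concretely, fix $t_1<t_2<t_3$ in $I$ and put $\alpha=\frac{t_2-t_1}{t_3-t_1}\in(0,1)$, so that $1-\alpha=\frac{t_3-t_2}{t_3-t_1}$ and $t_2=(1-\alpha)t_1+\alpha t_3$; substituting this into \eqref{e1} with $x_1=t_1,\ x_2=t_3$ yields exactly (a). To pass among the four forms I would use only that $\le_C$ is a \emph{vector} preorder: by (OVS1)--(OVS2) a relation $u\le_C v$ is unchanged under adding a fixed vector to both sides and under multiplying both sides by a positive scalar, and both operations are reversible. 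Multiplying (a) by $t_3-t_1>0$ gives the symmetric inequality
\[
(t_3-t_1)\,\vphi(t_2)\ \le_C\ (t_3-t_2)\,\vphi(t_1)+(t_2-t_1)\,\vphi(t_3);
\]
splitting $t_3-t_1=(t_3-t_2)+(t_2-t_1)$ on the left and regrouping turns this into
\[
(t_3-t_2)\big(\vphi(t_2)-\vphi(t_1)\big)\ \le_C\ (t_2-t_1)\big(\vphi(t_3)-\vphi(t_2)\big),
\]
which is (d) after dividing by $(t_2-t_1)(t_3-t_2)>0$. Subtracting instead $(t_3-t_1)\vphi(t_1)$ (resp. $(t_3-t_1)\vphi(t_3)$) from both sides of the symmetric inequality and dividing by $(t_2-t_1)(t_3-t_1)$ (resp. $(t_3-t_2)(t_3-t_1)$) gives (b) (resp. (c)). Since every step is reversible, (a)--(d) are equivalent.

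For part 2 I would fix $t_0\in I$, take $t<t'$ in $I\sms\{t_0\}$, and split into the three possible positions of $t_0$. If $t_0<t<t'$, apply (b) to the triple $(t_0,t,t')$: this is literally $\Delta_{t_0}(\vphi)(t)\le_C\Delta_{t_0}(\vphi)(t')$. If $t<t'<t_0$, apply (c) to $(t,t',t_0)$ and cancel the two minus signs in the denominators, again obtaining $\Delta_{t_0}(\vphi)(t)\le_C\Delta_{t_0}(\vphi)(t')$. If $t<t_0<t'$, apply (d) to $(t,t_0,t')$ and cancel the minus sign in the left-hand denominator. In all three cases one lands on \eqref{eq1.slope}.

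I do not expect a genuine obstacle here: this is the classical scalar argument (as in \cite{Hardy-Lit-Pol}) transcribed word for word. The only delicate point, and the whole reason the transcription works, is that every passage must be realized by multiplying or dividing by a \emph{strictly positive} real, so that the vector order is preserved by (OVS2); one should also be careful with the sign cancellations in the denominators of the slope in part 2. No Archimedean property, no lattice structure, and no normality of $C$ is needed for this proposition — those hypotheses enter only in the continuity and Lipschitz results of the subsequent sections.
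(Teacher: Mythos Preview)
Your proof is correct and follows essentially the same approach as the paper: derive (a) from the convex combination $t_2=\frac{t_3-t_2}{t_3-t_1}t_1+\frac{t_2-t_1}{t_3-t_1}t_3$ and the convexity of $\vphi$, then observe that (b)--(d) are algebraic rearrangements of (a) using only (OVS1)--(OVS2), and read off part~2 from part~1. The paper's own proof is terser (it just says ``isolating $\vphi(t_2)$ on the left'' for the equivalences and ``follows from 1'' for part~2), whereas you spell out the manipulations and the three-case split, but the substance is identical.
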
\begin{proof} The proof is based on the  identity
\bequ\label{eq1.cv-fcs2}
    t_2=\frac{t_3-t_2}{t_3-t_1}t_1\,+\, \frac{t_2-t_1}{t_3-t_1}t_3,
\eequ
valid for all  points  $t_1<t_2<t_3$ in $I.$
The identity can be verified by a direct calculation.

 The inequality \eqref{eq2.cv-fcs2}.(a) follows from \eqref{eq1.cv-fcs2} and the convexity of $\vphi$.

Isolating in the left-hand side of the inequalities (b),(c),(d) the value $\vphi(t_2)$ one obtains in all cases the inequality from (a), proving their equivalence.

2 Follows from 1.
\end{proof}

For $x,y\in X,\, x\ne y,$ the right line $D(x,y)$ and the algebraic segment determined  $x,y$ are given by
 $$
 D(x,y)=\{x+t(y-x) : t\in\Real\}\quad\mbox{and}\quad [x,y]=\{x+t(y-x) : t\in[0,1]\}\,,$$
 respectively.

Consider now a more general framework.

\begin{prop}\label{cs-le.cv-Lip-lcs}
Let $X$ be a vector space and $p$ a seminorm on $X$.
For   $x,y\in X$    such that  $p(x-y)>0$    put $z_t=x+t(y-x),\, t\in\Real$.
\ben
\item[\rm 1.] For every $t,t'\in\Real$
\bequs
p(z_t-z_{t'})=|t-t'|\,p(y-x)\,.\eequs
\item[\rm 2.] If $z_1,z_2,z_3$ are points corresponding to $t_1<t_2<t_3$, then
\bequs
z_2=\frac{p(z_3-z_2)}{p(z_3-z_2)}z_1+\frac{p(z_2-z_1)}{p(z_3-z_2)}z_3\quad\mbx{and}\quad p(z_3-z_1)=p(z_2-z_1)+p(z_3-z_2)\,.\eequs
\item[\rm 3.] Let $\W$ be a convex subset of $X$, $Y$ a vector space ordered by a cone $C$  and $f:\W\to Y$ a $C$-convex function. For   $x_0:=x+t_0(y-x)\in D(x,y)\cap \W$, the $p$-slope of $f$ is given by
    $$
   \Delta_{p,x_0}(f)(z_t)=\frac{f(z_t)-f(x_0)}{p(z_t-x_0)}\,,$$
for $t\in\Real$ such that $z_t\in D(x,y)\cap\W\sms \{x_0\}$.

  Then $t_0<t<t'$\, or\, $t<t'<t_0$ implies
\bequ\label{eq1.slope}
 \Delta_{p,x_0}(f)(z_t)\le \Delta_{p,x_0}(f)(z_{t'})\,,\eequ
and  $t<t_0<t'$ implies
\bequ\label{eq2.slope}
 \frac{f(x_0)-f(z_t)}{p(x_0-z_t)}\le \frac{f(z_{t'})-f(x_0)}{p(z_{t'}-x_0)}\quad{\rm (} \iff -\Delta_{p,x_0}(f)(z_t)\le \Delta_{p,x_0}(f)(z_{t'}){\rm )}\; \,.\eequ
  \een
\end{prop}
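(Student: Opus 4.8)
The plan is to dispatch parts 1 and 2 by direct computation and to reduce part 3 to the one--variable Proposition \ref{p.cv-fcs2}. Part 1 is immediate: $z_t-z_{t'}=(t-t')(y-x)$, so by positive homogeneity $p(z_t-z_{t'})=|t-t'|\,p(y-x)$. For part 2 I would feed this into the elementary identity \eqref{eq1.cv-fcs2}: writing $t_2=\frac{t_3-t_2}{t_3-t_1}t_1+\frac{t_2-t_1}{t_3-t_1}t_3$ as a convex combination (legitimate since $t_1<t_2<t_3$) and applying the affine map $t\mapsto z_t$ gives $z_2=\frac{t_3-t_2}{t_3-t_1}z_1+\frac{t_2-t_1}{t_3-t_1}z_3$; by part 1 the two scalar coefficients equal $p(z_3-z_2)/p(z_3-z_1)$ and $p(z_2-z_1)/p(z_3-z_1)$. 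The additivity $p(z_3-z_1)=p(z_3-z_2)+p(z_2-z_1)$ is, again via part 1, nothing but the identity $t_3-t_1=(t_3-t_2)+(t_2-t_1)$ scaled by $p(y-x)$.

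The substance is part 3, and the idea is to restrict $f$ to the line $D(x,y)$ and invoke the scalar theory. First, $I:=\{t\in\Real : z_t\in\W\}$ is an interval of $\Real$ (it is the inverse image of the convex set $\W$ under the affine map $t\mapsto z_t$), $D(x,y)\cap\W=\{z_t : t\in I\}$, and $\vphi:I\to Y$, $\vphi(t):=f(z_t)$, is a $C$-convex function of one real variable, because $t\mapsto z_t$ is affine and $f$ is $C$-convex; hence Proposition \ref{p.cv-fcs2} applies to $\vphi$. Next, by part 1, $p(z_t-x_0)=|t-t_0|\,p(y-x)$, so the $p$-slope of $f$ at $x_0$ along the line is a rescaling of the ordinary slope $\Delta_{t_0}(\vphi)$ of $\vphi$ at $t_0$: one has $\Delta_{p,x_0}(f)(z_t)=\frac{1}{p(y-x)}\,\Delta_{t_0}(\vphi)(t)$ when $t>t_0$, and $\Delta_{p,x_0}(f)(z_t)=-\frac{1}{p(y-x)}\,\Delta_{t_0}(\vphi)(t)$ when $t<t_0$, the minus sign coming from $|t-t_0|=-(t-t_0)$. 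Dividing by the positive number $p(y-x)$ (this is where $p(x-y)>0$ enters), inequality \eqref{eq1.slope} reduces, on each of the rays $t>t_0$ and $t<t_0$ and with this sign accounted for, to the monotonicity of the scalar slope $\Delta_{t_0}(\vphi)$ established in part 2 of Proposition \ref{p.cv-fcs2}; and inequality \eqref{eq2.slope}, for $t<t_0<t'$, reduces to the two--sided monotonicity of $\Delta_{t_0}(\vphi)$ across $t_0$ — equivalently to inequality \eqref{eq2.cv-fcs2}(b) with $(t_1,t_2,t_3)=(t,t_0,t')$ — once one notes that $\frac{f(x_0)-f(z_t)}{p(x_0-z_t)}=-\Delta_{p,x_0}(f)(z_t)$ and $p(x_0-z_t)=p(z_t-x_0)$.

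The only delicate point is exactly this sign bookkeeping. The $p$-slope does not see which side of $x_0$ the point $z_t$ lies on, since $p(z_t-x_0)\ge 0$ always, whereas the scalar slope $\Delta_{t_0}(\vphi)$ does; so one must keep the regimes $t>t_0$ and $t<t_0$ separated, track the factor $\sign(t-t_0)$, and in \eqref{eq2.slope} combine the $t<t_0$ expression for the left-hand slope with the $t>t_0$ expression for the right-hand slope. Everything else is routine and rests only on Proposition \ref{p.cv-fcs2} and on the standing hypothesis $p(x-y)>0$, which guarantees that all the denominators appearing are nonzero.
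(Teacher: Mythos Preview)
Your approach is essentially the paper's own: parts 1 and 2 by direct computation from \eqref{eq1.cv-fcs2}, and part 3 by restricting to the line, setting $\vphi(t)=f(z_t)$, and invoking Proposition~\ref{p.cv-fcs2}. The paper does exactly this, writing for $t_0<t<t'$
\[
\frac{f(z_t)-f(x_0)}{p(z_t-x_0)}=\frac{\vphi(t)-\vphi(t_0)}{(t-t_0)p(y-x)}\le\frac{\vphi(t')-\vphi(t_0)}{(t'-t_0)p(y-x)}=\frac{f(z_{t'})-f(x_0)}{p(z_{t'}-x_0)},
\]
and then dismissing the case $t<t'<t_0$ as ``treated similarly''.

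There is, however, a genuine issue that your phrase ``with this sign accounted for'' conceals rather than resolves. On the ray $t<t_0$ you correctly record $\Delta_{p,x_0}(f)(z_t)=-\frac{1}{p(y-x)}\Delta_{t_0}(\vphi)(t)$. But then, for $t<t'<t_0$, the monotonicity $\Delta_{t_0}(\vphi)(t)\le\Delta_{t_0}(\vphi)(t')$ yields $\Delta_{p,x_0}(f)(z_t)\ge\Delta_{p,x_0}(f)(z_{t'})$, the \emph{reverse} of \eqref{eq1.slope}. A concrete check: with $X=\Real$, $p=|\cdot|$, $f(s)=s^2$, $x_0=0$, one gets $\Delta_{p,0}(f)(s)=|s|$, which decreases as $s$ increases through negative values. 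So the case $t<t'<t_0$ of \eqref{eq1.slope}, as stated, is simply false; neither your argument nor the paper's ``similarly'' establishes it. Your treatment of the cases $t_0<t<t'$ and $t<t_0<t'$ is fine, and only \eqref{eq2.slope} is used later in the paper.
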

\begin{proof}
  The equality from 1 follows by the definition of $z_t$.

  For 2, observe that the equality
  $$
  t_2=\frac{t_3-t_2}{t_3-t_1}\,t_1+ \frac{t_2-t_1}{t_3-t_1}\,t_3 $$
implies

$$
  z_2=\frac{t_3-t_2}{t_3-t_1}\,z_1+ \frac{t_2-t_1}{t_3-t_1}\,z_3 \,.$$

By 1,
$$
\frac{t_3-t_2}{t_3-t_1}=\frac{p(z_3-z_2)}{p(z_3-z_1)}\quad \mbx{and}\quad
\frac{t_2-t_1}{t_3-t_1}=\frac{p(z_2-z_1)}{p(z_3-z_1)}\,,$$
proving the representation formula for $z_2$.

The equality $p(z_3-z_1)=p(z_2-z_1)+p(z_3-z_2)$ is equivalent to $t_3-t_1=(t_3-t_2)+(t_2-t_1)$.

3. Let $x_0=x+t_0(y-x),\, z=x+t(y-x)$ and $z'=x+t'(y-x)$. The function $\vphi(t)=f(x+t(y-x))$ is convex, so that, by Proposition \ref{p.cv-fcs2},
its slope    is increasing.  If  $t_0<t<t'$, then

$$\frac{f(z)-f(x_0)}{p(z-x_0)}=\frac{\vphi(t)-\vphi(t_0)}{(t-t_0)p(y-x)}\le\frac{\vphi(t')-\vphi(t_0)}{(t'-t_0)p(y-x)}
=\frac{f(z')-f(x_0)}{p(z'-x_0)}\,.$$

The case $t<t'<t_0$  can be treated similarly. If  $t<t_0<t'$, then
$$\frac{f(x_0)-f(z)}{p(x_0-z)}=\frac{\vphi(t_0)-\vphi(t)}{(t_0-t)p(y-x)}\le\frac{\vphi(t')-\vphi(t_0)}{(t'-t_0)p(y-x)}
=\frac{f(z')-f(x_0)}{p(z'-x_0)}\,.$$
\end{proof}

\section{Continuity properties of convex functions}
In this section we prove some results on the continuity of convex functions.

We start with real-valued function of one real variable, a typical case. Based on the  monotonicity of the slope one can give a simple proof of the Lipschitz continuity of convex functions.
\begin{prop}\label{p1.cv-Lip} Let $\vphi:I\to\Real$ be a convex function defined on an interval $I\subset \Real$.
  Then   $\vphi$ is continuous on $\inter(I)$ and Lipschitz on every compact interval $[a,b]\subset\inter(I)$.
 \end{prop}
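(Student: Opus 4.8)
The plan is to derive both assertions from the monotonicity of the slope recorded in Proposition \ref{p.cv-fcs2}. Since continuity is a local property and every point of $\inter(I)$ is contained in some compact subinterval $[a,b]\subset\inter(I)$, and since a function that is Lipschitz on $[a,b]$ is in particular continuous there, it suffices to establish the Lipschitz estimate on an arbitrary compact $[a,b]\subset\inter(I)$. So I would fix such an interval and, crucially using that it lies in the \emph{interior} of $I$, choose points $a',b'\in I$ with $a'<a\le b<b'$. Setting
$$
m=\frac{\vphi(a)-\vphi(a')}{a-a'},\qquad M=\frac{\vphi(b')-\vphi(b)}{b'-b},\qquad L=\max\{|m|,|M|\},
$$
the whole proof reduces to the claim that $m\le \dfrac{\vphi(y)-\vphi(x)}{y-x}\le M$ for all $x,y\in[a,b]$ with $x<y$; granting this, the case $x=y$ being trivial, one gets $|\vphi(y)-\vphi(x)|\le L|y-x|$ on $[a,b]$, i.e. $\vphi$ is $L$-Lipschitz there, and continuity on $\inter(I)$ follows as explained.

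To prove the claim I would simply chain the slope inequalities of Proposition \ref{p.cv-fcs2}. For the upper bound, apply inequality \eqref{eq2.cv-fcs2}(d) to the triple $x<y<b'$ to get $\dfrac{\vphi(y)-\vphi(x)}{y-x}\le \dfrac{\vphi(b')-\vphi(y)}{b'-y}$, and then use that the slope of $\vphi$ at $b'$ is increasing (Proposition \ref{p.cv-fcs2}.2) along $y\le b<b'$ to obtain $\dfrac{\vphi(b')-\vphi(y)}{b'-y}\le \dfrac{\vphi(b')-\vphi(b)}{b'-b}=M$. Symmetrically, apply \eqref{eq2.cv-fcs2}(d) to the triple $a'<x<y$ to get $\dfrac{\vphi(x)-\vphi(a')}{x-a'}\le \dfrac{\vphi(y)-\vphi(x)}{y-x}$, and use that the slope at $a'$ is increasing along $a'<a\le x$ to obtain $m=\dfrac{\vphi(a)-\vphi(a')}{a-a'}\le \dfrac{\vphi(x)-\vphi(a')}{x-a'}$. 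Combining the two chains yields the claim.

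There is no serious obstacle here: the argument is entirely elementary bookkeeping with the slope inequalities already proved. The one point that genuinely matters is the freedom to pick strictly exterior points $a'<a$ and $b<b'$ inside $I$ — this is exactly where the hypothesis $[a,b]\subset\inter(I)$ (as opposed to merely $[a,b]\subset I$) is used, since near an endpoint of $I$ the slope of a convex function may be unbounded and no Lipschitz estimate can hold (e.g. $\vphi(t)=-\sqrt t$ on $I=[0,1]$).
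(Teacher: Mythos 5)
Your proof is correct and follows essentially the same route as the paper: reduce continuity to the Lipschitz estimate, pick auxiliary points strictly outside the compact interval but inside $I$, and sandwich the difference quotient between the two fixed slopes $m$ and $M$ using the monotonicity of the slope from Proposition \ref{p.cv-fcs2}. Your $m$, $M$, $a'$, $b'$ are exactly the paper's $A$, $B$, $a$, $b$ up to relabelling, and the chains of inequalities coincide.
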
\begin{proof}
   It is obvious that it suffices to check the fulfillment of the Lipschitz condition.
For  $[\alpha,\beta]\subset\inter(I)$ with $\alpha<\beta$, let $a,b\in\inter(I)$ be such that $a<\alpha <\beta<b$.

Let $\alpha\le t<t'\le\beta.$ By  Proposition \ref{p.cv-fcs2}.2,
$$
\frac{\vphi(t')-\vphi(t)}{t'-t}\le \frac{\vphi(b)-\vphi(t)}{b-t}\le\frac{\vphi(b)-\vphi(\beta)}{b-\beta}=:B\,,
$$
and
$$
A:=\frac{\vphi(\alpha)-\vphi(a)}{\alpha-a}\le\frac{\vphi(t')-\vphi(a)}{t'-a}\le \frac{\vphi(t)-\vphi(t')}{t-t'} \,.$$

It follows $|\vphi(t)-\vphi(t')|\le L\, |t-t'|,$ for all $t,t'\in[\alpha,\beta],$ where $L:=\max\{|A|,|B|\}$.
 \end{proof}

We mention also the following   properties of convex functions.

\begin{prop}\label{p.cv-aff} Let $I$ be an interval in $\Real, \, \varphi :I\to \Real$ a convex function and $a<b$ two points in $I$.
\ben
\item[\rm 1.] If for some
$0<t_0<1,\,  \varphi ((1-t_0)a+t_0b)=(1-t_0)\varphi (a)+t_0\varphi (b), $ then $\varphi $ is an affine function on the interval $[a,b],$
that is, $   \varphi ((1-t)a+tb)=(1-t)\varphi (a)+t\varphi (b) $  for every $t\in [0,1].$
\item[\rm 2.] Let $a,b\in\inter(I),\, a<b$. If $\vphi(a)<\vphi(b)$, then $\vphi$ is strictly inreasing on the interval $I_{b+}=\{\alpha\in I : \alpha\ge b\}$.
If $\vphi(a)>\vphi(b)$, then $\vphi$ is strictly decreasing on the interval $I_{a-}=\{\alpha\in I : \alpha\le a\}$.
\item[\rm 3.] Any nonconstant convex function $\vphi:\Real\to\Real$ is unbounded, more exactly $\sup\vphi(\Real)=+\infty$.
\item[\rm 4.] Let $\vphi:[0,\infty)\to[0,\infty)$ be convex such that $\vphi(\alpha)=0\iff \alpha =0$. Then $\vphi$ is   strictly increasing and superadditive, that is,
$$
\vphi(\alpha+\beta)\ge \vphi(\alpha)+\vphi(\beta)\,,$$
for all $\alpha,\beta\in[0,\infty)$.

If $\vphi:[0,\infty)\to [0,\infty)$ is concave and $\vphi(\alpha)=0\iff \alpha =0$, then $\vphi$ is increasisng and subadditive, that is,
$$
\vphi(\alpha+\beta)\le \vphi(\alpha)+\vphi(\beta)\,,$$
for all $\alpha,\beta\in[0,\infty)$.
\een\end{prop}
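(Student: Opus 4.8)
The plan is to derive all four assertions from the monotonicity of the slope (Proposition \ref{p.cv-fcs2}), i.e.\ from the three-point inequalities \eqref{eq2.cv-fcs2}. For item~1, let $c=(1-t_0)a+t_0b$ and let $\ell$ be the affine function with $\ell(a)=\varphi(a)$, $\ell(b)=\varphi(b)$. Convexity of $\varphi$ gives $\varphi\le\ell$ on $[a,b]$, while the hypothesis reads $\varphi(c)=\ell(c)$. Now fix $x\in(a,b)$; if $a<x<c$ then $c=(1-\mu)x+\mu b$ for some $\mu\in(0,1)$, and
\[
\varphi(c)\le(1-\mu)\varphi(x)+\mu\varphi(b)\le(1-\mu)\ell(x)+\mu\ell(b)=\ell(c)=\varphi(c)
\]
(using $\varphi(b)=\ell(b)$), so the inequalities collapse to equalities and $\varphi(x)=\ell(x)$; the case $c<x<b$ is symmetric (write $c$ as a convex combination of $a$ and $x$). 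Hence $\varphi\equiv\ell$ on $[a,b]$.

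For item~2, suppose $\varphi(a)<\varphi(b)$ and put $m:=\dfrac{\varphi(b)-\varphi(a)}{b-a}>0$. Take $b\le s<t$ in $I$. If $s=b$, the three-point inequality for $a<b<t$ gives $m\le\dfrac{\varphi(t)-\varphi(b)}{t-b}$, hence $\varphi(t)>\varphi(b)$. If $s>b$, combining the three-point inequalities for $a<b<s$ and for $a<s<t$ gives $m\le\dfrac{\varphi(s)-\varphi(a)}{s-a}\le\dfrac{\varphi(t)-\varphi(s)}{t-s}$, hence again $\varphi(t)>\varphi(s)$. Thus $\varphi$ is strictly increasing on $I_{b+}$; the assertion about $I_{a-}$ follows by applying this to $u\mapsto\varphi(-u)$ on $-I$. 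Item~3 is then immediate: if $\varphi$ is nonconstant, choose $a<b$ with $\varphi(a)\ne\varphi(b)$. If $\varphi(a)<\varphi(b)$, the three-point inequality for $a<b<t$ yields $\varphi(t)\ge\varphi(b)+(t-b)m\to+\infty$ as $t\to+\infty$; if $\varphi(a)>\varphi(b)$, the symmetric estimate gives $\varphi(t)\to+\infty$ as $t\to-\infty$. In either case $\sup\varphi(\Real)=+\infty$.

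For item~4 (convex case), strict monotonicity is obtained exactly as in item~2 but with the base point $0$: for $0<\alpha<\beta$ the three-point inequality for $0<\alpha<\beta$ gives $0<\dfrac{\varphi(\alpha)}{\alpha}\le\dfrac{\varphi(\beta)-\varphi(\alpha)}{\beta-\alpha}$, so $\varphi(\alpha)<\varphi(\beta)$, while $\varphi(0)=0<\varphi(\beta)$; hence $\varphi$ is strictly increasing on $[0,\infty)$. For superadditivity, let $\alpha,\beta>0$, set $s=\alpha+\beta$, and use $\alpha=\tfrac{\alpha}{s}s+(1-\tfrac{\alpha}{s})\cdot0$, $\beta=\tfrac{\beta}{s}s+(1-\tfrac{\beta}{s})\cdot0$; convexity and $\varphi(0)=0$ give $\varphi(\alpha)\le\tfrac{\alpha}{s}\varphi(s)$ and $\varphi(\beta)\le\tfrac{\beta}{s}\varphi(s)$, and adding yields $\varphi(\alpha)+\varphi(\beta)\le\varphi(s)=\varphi(\alpha+\beta)$; the cases $\alpha=0$ or $\beta=0$ are trivial. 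The concave statement follows by observing that the last computation never used the sign of $\varphi$, so applying it to the convex function $-\varphi$ gives the subadditivity of $\varphi$; and $\varphi$ is non-decreasing because a concave function bounded below on $[0,\infty)$ is non-decreasing there (a point where its slope turned negative would, by monotonicity of slopes, force $\varphi\to-\infty$), which $\varphi\ge0$ forbids.

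I do not anticipate a real obstacle; the only points needing a little care are dealing with both sides of the point $c$ in item~1, treating the degenerate case $s=b$ separately in item~2, and noticing in item~4 that the superadditivity argument is sign-free so that it transfers verbatim from $\varphi$ to $-\varphi$ for the concave case.
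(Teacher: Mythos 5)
Your proof is correct throughout, and it rests on the same foundation as the paper's (the three--point slope inequalities of Proposition \ref{p.cv-fcs2}); items 2 and 3 are essentially identical to the paper's treatment. Two sub-arguments differ genuinely. For item 1 the paper argues by contradiction: assuming $\varphi(a+t(b-a))<\varphi(a)+t(\varphi(b)-\varphi(a))$ for some $t$ between $t_0$ and $1$, it writes $c=a+t_0(b-a)$ as a convex combination of $a$ and $c_t$ and derives a strict inequality violating convexity; you instead introduce the chord $\ell$ and squeeze, writing $c$ as a convex combination of $x$ and an endpoint so that the chain of inequalities collapses --- both work, yours avoids the explicit coefficient computation. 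For item 4 your superadditivity proof (bound $\varphi(\alpha)\le\frac{\alpha}{s}\varphi(s)$ and $\varphi(\beta)\le\frac{\beta}{s}\varphi(s)$ with $s=\alpha+\beta$ and add) is shorter and cleaner than the paper's, which passes through the auxiliary inequality $\alpha\varphi(\beta)-\beta\varphi(\alpha)\ge0$ and a second convex-combination identity before recombining; and your observation that this computation is sign-free, so that the concave case follows by applying it to $-\varphi$, replaces the paper's ``same line with reversed inequalities'' with an actual reduction. The only price is that your monotonicity argument for the concave case is stated tersely (``a point where the slope turned negative forces $\varphi\to-\infty$''), which is exactly the paper's explicit computation with $\alpha_t=\alpha+t(\beta-\alpha)$, so nothing is missing.
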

\begin{proof} 1.\;
Suppose that for some $\,t,\,t_0<t<1,\,
\varphi (a+t(b-a))<\varphi (a)+t(\varphi (b)-\varphi (a)). $ Let $c=a+t_0(b-a)$ and
$c_t=a+t(b-a).$
It follows $0<t_0/t <1,\; c=a+\frac {t_0}{t}(c_t-a),\,$ and   %
\begin{align*}%
\varphi (c)=&\varphi (a)+t_0(\varphi (b)-\varphi (a))=\left(1-\frac{t_0}{t}\right)\varphi (a)+\frac {t_0}{t}[\varphi (a)+t(\varphi (b)-\varphi (a))]\\&>\left(1-\f{t_0}{t}\right)\varphi (a)+\f{t_0}{t}\varphi (c_t), %
\end{align*} %
in contradiction to the convexity of $f.$

The case $0<t<t_0$ can be treated similarly.

2. Suppose that $\vphi(a)<\vphi(b)$ and let  $\alpha>b$   be a point in $I$. Then,  by the monotonicity of the slope,
$$
\frac{\vphi(\alpha)-\vphi(b)}{\alpha-b}\ge \frac{\vphi(b)-\vphi(a)}{b-a}>0\;\Lra\;\vphi(\alpha)> \vphi(b)\,.$$

If $b<\alpha<\alpha'$   belong to  $I$, then $\vphi(\alpha)>\vphi(b)$, and applying the above reasoning to the points $b<\alpha<\alpha'$, it follows $\vphi(\alpha)<\vphi(\alpha').$

In the case $\vphi(a)>\vphi(b)$, a similar argument applied to points $\alpha\in I$ with $\alpha< a$ shows that $\vphi$ is strictly decreasing on $I_{a-}$.

3. \; Suppose that there exists two points  $a<b$ in $\Real$ such that $\vphi(a)\ne\vphi(b).$

\emph{Case} I. \; $\vphi(b)-\vphi(a)>0$

Let $\alpha_t=a+t(b-a),\, t>1$. The monotonicity of the slope implies
$$
\frac{\vphi(\alpha_t)-\vphi(a)}{\alpha_t-a}\ge \frac{\vphi(b)-\vphi(a)}{b-a}\,.$$

Since $\alpha_t-a=t(b-a)>0$, it follows
$$
\vphi(\alpha_t)-\vphi(a)\ge t\,(\vphi(b)-\vphi(a))\to +\infty \;\;\mbx{as}\;\; t\to\infty\,.$$

\emph{Case} II. \; $\vphi(b)-\vphi(a)<0$

Taking   $\alpha_t=a+t(b-a) $ for $t<0$, it follows   $\alpha_t<a<b$, so that,  by the monotonicity of the slope,
$$
\frac{\vphi(\alpha_t)-\vphi(a)}{\alpha_t-a}\le \frac{\vphi(b)-\vphi(a)}{b-a} \,.$$

Since, in this case, $\alpha_t-a=t(b-a)<0$, it follows
$$
\vphi(\alpha_t)-\vphi(a)\ge t\,(\vphi(b)-\vphi(a))\to +\infty \;\;\mbx{as}\;\; t\to -\infty\,.$$

4.\; By 2, $\vphi$ is strictly increasing on $[0,\infty)$  because  $\vphi(\alpha)>0=\vphi(0)$ for every $\alpha>0$.

Let now $0<\alpha< \beta$. Then, by the convexity of $\vphi$,
$$
\vphi(\alpha)=\vphi\left(\big(1-\frac\alpha\beta\big)0+\frac\alpha\beta\beta\right)\le \frac\alpha\beta \vphi(\beta)\,,$$
so that
\bequ\label{eq1.sup-ad}
\alpha\vphi(\beta)-\beta\vphi(\alpha)\ge 0\,.
\eequ

 Again, by the convexity of $\vphi$,
\begin{align*}
 \vphi(\beta)\le \frac\alpha\beta\vphi(\alpha)+\frac{\beta-\alpha}{\beta}\vphi(\alpha+\beta)\\
 \end{align*}
implying
\begin{align*}
  \vphi(\alpha+\beta) \ge\frac{\alpha\vphi(\beta)-\beta\vphi(\alpha)}{\beta-\alpha}+\vphi(\alpha)+\vphi(\beta)\overset{\eqref{eq1.sup-ad}}{\ge}\vphi(\alpha)+\vphi(\beta)\,.
 \end{align*}

   Suppose now that $\vphi$ is concave and  not increasing on $[0,\infty)$. Then there exist  two numbers $0<\alpha<\beta$ such that $\vphi(\alpha)>\vphi(\beta)$. Let  $\alpha_t=\alpha+t(\beta-\alpha)$  with $t>1$.   Since the slope of $\vphi$ is decreasing, we have

   $$
\frac{\vphi(\alpha_t)-\vphi(\alpha)}{\alpha_t-\alpha}\le \frac{\vphi(\beta)-\vphi(\alpha)}{\beta-\alpha}\,,$$
implying
$$
 \vphi(\alpha_t)\le-\vphi(\alpha)+t (\vphi(\beta)-\vphi(\alpha))\lra -\infty \quad\mbox{as}\;\ t\to \infty\,.$$

 Consequently,  $\vphi(\alpha_t)<0$ for $t$ large enough, in contradiction to the hypothesis that $\vphi\ge 0$.

 The proof of the subadditivity follows the same line (reversing the inequalities) as  the proof of superadditivity in the case of a convex function.
\end{proof}
\begin{remark}
  Geometrically, the property 1 from Proposition \ref{p.cv-aff} says that if a point $(t_0,\vphi(t_0))$, with $a<t_0<b$, belongs to the segment $[A,B]$ where $A(a,\vphi(a))$ and $B(b,\vphi(b))$ are points on the graph of $\vphi$, then the graph of $\vphi$ for $t\in[a,b]$ agrees with the segment $[A,B]$.

  The example of the function $\vphi(t)=t$ for $t\in[0,1]$ and $\vphi(t)=1$ for $t\ge 1$ shows that a concave function satisfying the hypotheses from Proposition \ref{p.cv-aff}.4, can be only increasing, not strictly.
\end{remark}

We consider now a more general situation.

\begin{prop}\label{cs-p.cv-cont} Let $X$ be a TVS, $\W\subset X$ open and convex and $f:\W\to \Real$  a convex function.
\begin{enumerate}
\item[{\rm 1.}] If the function $f$ is bounded from above on a neighborhood of some point $x_0\in \W$, then $f$ is continuous at $x_0.$
\item[{\rm 2.}] If there exists a point $x_0\in \W$ and a neighborhood $U\subset \W$ of $x_0$ such that $f$ is bounded from above on $U$, then $f$ is locally bounded from above on $\W$, that is, every point $x\in \W$ has a neighborhood $V\subset \W$ such that $f$ is bounded from above on $V.$
 \item[{\rm 3.}]    If the function $f$ is bounded from above on a neighborhood of some point $x_0\in \W$, then $f$ is continuous on $\W.$
\end{enumerate}
\end{prop}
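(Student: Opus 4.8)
The plan is to establish assertion 1 first, then assertion 2, and obtain 3 as an immediate combination of the two. Throughout I will use only elementary consequences of the convexity inequality \eqref{e1} (in the scalar case $Y=\Real$); morally this is just the one-variable slope estimate behind Proposition \ref{p1.cv-Lip}, applied uniformly in all directions simultaneously.

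For part 1, I would first normalize by replacing $f$ with $f-f(x_0)$, so that $f(x_0)=0$, and then choose a \emph{balanced} $0$-neighborhood $W$ with $x_0+W\subset\W$ on which $f\le M$ (necessarily $M\ge 0$, since $f(x_0)=0$). Given $\eps\in(0,1]$ and $v\in\eps W$, write $v=\eps w$ with $w\in W$; applying convexity along the segment from $x_0$ to $x_0+w$, namely $x_0+v=(1-\eps)x_0+\eps(x_0+w)$, yields $f(x_0+v)\le\eps f(x_0+w)\le\eps M$. Since $W$ is balanced, the same bound holds with $-v$ in place of $v$, and then convexity along the segment from $x_0-v$ to $x_0+v$ through its midpoint $x_0$ gives $0=f(x_0)\le\tfrac12 f(x_0+v)+\tfrac12 f(x_0-v)$, i.e. $f(x_0+v)\ge-\eps M$. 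Hence $|f(x_0+v)-f(x_0)|\le\eps M$ on the $0$-neighborhood $\eps W$; as $\eps$ can be made arbitrarily small this proves continuity at $x_0$ (the case $M=0$ is trivial, $f$ being identically $0$ near $x_0$).

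For part 2, fix an arbitrary $x\in\W$. Since $\W$ is open and the ray $t\mapsto x_0+t(x-x_0)$ passes through $x$ at $t=1$, there is $t_1>1$ with $y:=x_0+t_1(x-x_0)\in\W$; setting $\lbda:=1/t_1\in(0,1)$ we get $x=(1-\lbda)x_0+\lbda y$. Then $V:=(1-\lbda)(x_0+W)+\lbda y$ is a neighborhood of $x$, and $V\subset\W$ because $x_0+W\subset\W$, $y\in\W$ and $\W$ is convex. For $z\in V$, write $z=(1-\lbda)(x_0+w)+\lbda y$ with $w\in W$; convexity gives $f(z)\le(1-\lbda)f(x_0+w)+\lbda f(y)\le(1-\lbda)M+\lbda f(y)$, a bound independent of $z$. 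So $f$ is bounded from above on the neighborhood $V$ of $x$.

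Part 3 is then immediate: by part 2, every point of $\W$ has a neighborhood on which $f$ is bounded from above, so part 1 applied at each such point yields continuity of $f$ on all of $\W$. I do not anticipate a genuine obstacle here; the only points requiring a little care are choosing $W$ \emph{balanced} (always possible in a TVS) so that the lower estimate in part 1 works, checking that the set $V$ in part 2 already lies in $\W$ thanks to convexity of $\W$ (so no further shrinking is needed), and the harmless normalizations $f(x_0)=0$ and $M\ge 0$.
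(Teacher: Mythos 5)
Your proposal is correct and follows essentially the same route as the paper: the same $\epsic$-dilation plus midpoint argument for continuity at $x_0$ (the paper works with $f(x_0)$ explicitly rather than normalizing it to $0$, which changes nothing), and the same construction $V=x+(1-\lbda)W$ with $x=(1-\lbda)x_0+\lbda y$ for propagating the upper bound, with part 3 obtained by combining the first two. No gaps.
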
\begin{proof}
  1.\; Let $ U$  be a balanced neighborhood of 0  such that $x_0+U\subset \W$ and, for some $ \beta>0,$  $\,f(x)\leq \beta\,$ for all $x\in x_0+U,$
or, equivalently, to $f(x_0+u)\leq \beta$ for all $u\in U.$

 For $0<\epsic<1,\; \pm\epsic u\in U$ and, by the convexity of $f$,
 \bequs
 f(x_0+\epsic u)-f(x_0)=f((1-\epsic)x_0+\epsic(x_0+u))-f(x_0)\leq (1-\epsic)f(x_0) +\epsic f(x_0+u)-f(x_0),
 \eequs
 so that
 \bequ\label{cs-eq1.cv-cont1}
 f(x_0+\epsic u)-f(x_0)\leq \epsic(f(x_0+u)-f(x_0))\leq \epsic(\beta-f(x_0)).
 \eequ

On the other side
\bequs %
f(x_0)=f\left(\frac{x_0+\epsic u+x_0-\epsic u}{2}\right)\leq \frac{1}{2}f(x_0+\epsic u)+\frac{1}{2}f(x_0-\epsic u),
 \eequs
 implying
 \bequ\label{cs-eq2.cv-cont1}
 f(x_0)-f(x_0+\epsic u)\leq f(x_0-\epsic u)-f(x_0)\leq \epsic(\beta -f(x_0)).
\eequ

The last inequality from above follows by replacing $u$ with $-u$ in \eqref{cs-eq1.cv-cont1}.  Now, by
\eqref{cs-eq1.cv-cont1} and \eqref{cs-eq2.cv-cont1} it follows
$$
|f(x_0+\epsic u)-f(x_0)|\leq \epsic (\beta-f(x_0))\quad\mbox{for all}\quad u\in U,
$$%
which is equivalent to
$$
|f(x_0+v)-f(x_0)|\leq \epsic (\beta-f(x_0))\quad\mbox{for every}\quad v\in \epsic U,
$$%
which shows that $f$ is continuous at $x_0.$

2.\; The proof has a geometric flavor and can be nicely illustrated by a drawing. Let  $U$ be a balanced neighborhood of 0 such that
$x_0+U \subset \W$ and, for some  $\beta>0,$   $\, f(x)\leq \beta$ for all $x\in x_0+U.$

Let $x\in \W.$ Since the set $\W$ is open, there exists $\alpha>1$ such that $x_1:=x_0+\alpha(x-x_0)\in \W,$
implying $x=\frac{\alpha-1}{\alpha}x_0+\frac{1}{\alpha}x_1. $ Putting $t=1/\alpha$ it follows $x=(1-t)x_0+tx_1$ with $0<t<1$. Consider the neighborhood
 $V:=  x+(1-t)\,U$  of $x$. We have $V\subset \W, $ because, by the convexity of $\W,$
 $$
 x+(1-t)u=tx_1+(1-t)(x_0+u)\in
 t\W+(1-t)\W \subset \W,
 $$
  for all $u\in U. $

 Also
 \begin{align*}
 f\left(x+ (1-t)u\right)=&f\left(tx_1+(1-t)(x_0+u)\right)\leq tf(x_1)+
 (1-t) f(x_0+u)\\ \leq& tf(x_1)+(1-t)\beta,
\end{align*}
  for every $u\in U.$

  3.\; The assertion from 3 follows  from 1 and 2.
\end{proof}

Based on this results one can give a characterization of the continuity of a convex function in terms of its epigraph. Let $X$ be a vector space, $\W$ a nonempty subset of $ X$  and $f:\W\to\Real$ a function. Let
\begin{align*}
 \epi(f)= &\,\{(x,\alpha)\in X\times \Real : f(x)\le\alpha\}\quad\mbx{and}\\
\epi'(f)= &\,\{(x,\alpha)\in X\times \Real : f(x)<\alpha\}\,,
\end{align*}
be the epigraph and, respectively, the strict epigraph of $f$.

The following result is a direct consequence of the definitions.

\begin{prop}\label{p1.epigraph}Let $X$ be a vector space, $\W\subset X$ a convex set and $f:\W\to\Real$ a function.
The following equivalences hold:
\begin{align*}
 \mbx{the function } f \mbx{ is convex } &\iff \epi(f) \mbx{ is a convex subset of } X\times \Real\\
&\iff  \epi'(f) \mbx{ is a convex subset of } X\times \Real\,.
 \end{align*}\end{prop}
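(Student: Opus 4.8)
The plan is to prove the chain of equivalences by showing separately that the convexity of $\epi(f)$ is equivalent to the convexity of $f$, and that the convexity of $\epi'(f)$ is equivalent to the convexity of $f$. Since here $Y=\Real$ and $C=\Real_+$, the convexity of $f$ is literally the inequality \eqref{e1}, so all four implications are just translations of the definitions of convex set, convex function, and (strict) epigraph.

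First I would prove ``$f$ convex $\Ra$ $\epi(f)$ convex''. Take $(x_1,\alpha_1),(x_2,\alpha_2)\in\epi(f)$, i.e.\ $f(x_i)\le\alpha_i$, and $\alpha\in[0,1]$. Since $\W$ is convex, $(1-\alpha)x_1+\alpha x_2\in\W$, and by \eqref{e1},
$$ f\big((1-\alpha)x_1+\alpha x_2\big)\le (1-\alpha)f(x_1)+\alpha f(x_2)\le (1-\alpha)\alpha_1+\alpha\alpha_2\,, $$
so the convex combination of the two points lies in $\epi(f)$. Repeating the same computation with the two inequalities $f(x_i)\le\alpha_i$ replaced by strict ones gives a strict final inequality (as $1-\alpha$ and $\alpha$ are nonnegative and not both zero), which proves ``$f$ convex $\Ra$ $\epi'(f)$ convex''.

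Conversely, for ``$\epi(f)$ convex $\Ra$ $f$ convex'' I would plug the graph points $(x_1,f(x_1))$ and $(x_2,f(x_2))$ into the convexity hypothesis for $\epi(f)$: for $\alpha\in[0,1]$ the point $\big((1-\alpha)x_1+\alpha x_2,\,(1-\alpha)f(x_1)+\alpha f(x_2)\big)$ belongs to $\epi(f)$, which is exactly \eqref{e1}. For ``$\epi'(f)$ convex $\Ra$ $f$ convex'' the graph points are \emph{not} available (they are not in the strict epigraph), so instead, for arbitrary $\varepsilon>0$, I would use $(x_i,f(x_i)+\varepsilon)\in\epi'(f)$; convexity then gives $f\big((1-\alpha)x_1+\alpha x_2\big)<(1-\alpha)f(x_1)+\alpha f(x_2)+\varepsilon$, and letting $\varepsilon\downarrow0$ yields \eqref{e1}.

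There is essentially no obstacle: the only mild subtlety is this last step, where recovering convexity of $f$ from convexity of the \emph{strict} epigraph cannot be done by a direct substitution of graph points but requires the $\varepsilon$-perturbation and a limiting argument; everything else is a straightforward unwinding of the definitions.
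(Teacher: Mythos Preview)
Your proof is correct and is precisely the ``direct consequence of the definitions'' that the paper alludes to; the paper does not give any explicit proof of this proposition beyond that remark, so your argument is exactly the intended unwinding, including the $\varepsilon$-perturbation needed for the implication ``$\epi'(f)$ convex $\Rightarrow$ $f$ convex''.
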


 We can characterize now the continuity of $f$.

 \begin{prop}\label{p2.epigraph}
   Let $X$ be a TVS, $\W\subset X$  nonempty open convex and $f:\W\to \Real$  a convex function.
   \ben
   \item[\rm 1.] {\rm(a)}\;\; $\inter(\epi(f))\subset \epi'(f)$;\\
   {\rm(b)}\; if $f$ is continuous at $x\in\W$, then $(x,\alpha)\in\inter(\epi(f)$ for all $\alpha>f(x)$;\\
   {\rm(c)} \; if $(x,\alpha)\in\inter(\epi(f)$, then $f$ is continuous at $x$.
\item[\rm 2.] The following are equivalent:\\
{\rm(i)}\; $f$ is continuous on $\W$;\\
{\rm(ii)}\; $\inter(\epi(f))\ne\emptyset$; \\
{\rm(iii)}\; $\epi'(f)$ is an open subset of $X\times \Real$.
\item[\rm 3.] If  $\inter(\epi(f))\ne\emptyset$, then  $\inter(\epi(f))=\epi'(f)$.
  \een\end{prop}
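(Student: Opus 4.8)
The plan is to prove the three parts of Proposition~\ref{p2.epigraph} by combining Proposition~\ref{cs-p.cv-cont} (continuity from local boundedness above) with Proposition~\ref{p1.epigraph} (equivalence of convexity of $f$ with convexity of $\epi(f)$ and $\epi'(f)$), together with elementary facts about interiors of convex sets in a TVS. Throughout, a generic point of $X\times\Real$ is written $(x,\alpha)$ and I use that $X\times\Real$ is a TVS, so that a point is interior to a convex set $E$ iff $E$ contains a neighborhood of that point of the product form $V\times(\alpha-\delta,\alpha+\delta)$ with $V$ a neighborhood of $x$ in $X$ and $\delta>0$.

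For part~1(a): if $(x,\alpha)\in\inter(\epi(f))$, then $(x,\alpha+\delta)$ and $(x,\alpha-\delta)$ both lie in $\epi(f)$ for small $\delta>0$, so $f(x)\le\alpha-\delta<\alpha$, giving $(x,\alpha)\in\epi'(f)$. For 1(b): if $f$ is continuous at $x$ and $\alpha>f(x)$, pick $\beta$ with $f(x)<\beta<\alpha$; continuity gives a neighborhood $V$ of $x$ in $\W$ with $f<\beta$ on $V$, and then $V\times(\beta,2\alpha-\beta)\subset\epi(f)$ is a product neighborhood of $(x,\alpha)$ lying in $\epi(f)$ (using $\alpha$ lies in that interval and every value exceeds $\beta>f$ on $V$). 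For 1(c): if $(x,\alpha)\in\inter(\epi(f))$, then a product neighborhood $V\times(\alpha-\delta,\alpha+\delta)\subset\epi(f)$ forces $f(y)\le\alpha$ for all $y\in V$, i.e. $f$ is bounded above on the neighborhood $V$ of $x$; Proposition~\ref{cs-p.cv-cont}.1 then yields continuity of $f$ at $x$.

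For part~2: the implication (i)$\Rightarrow$(ii) is immediate from 1(b) applied at any point (the epigraph is nonempty since $\W\ne\emptyset$, and any $(x,\alpha)$ with $\alpha>f(x)$ is interior). The implication (ii)$\Rightarrow$(i): if $\inter(\epi(f))\ne\emptyset$, pick $(x_0,\alpha_0)$ in it; by 1(c) $f$ is continuous at $x_0$, hence by Proposition~\ref{cs-p.cv-cont}.3 continuous on all of $\W$. For (i)$\Leftrightarrow$(iii): $\epi'(f)$ is convex by Proposition~\ref{p1.epigraph}; if $f$ is continuous, then for any $(x,\alpha)\in\epi'(f)$ we have $\alpha>f(x)$, so by 1(b) $(x,\alpha)\in\inter(\epi(f))\subset\epi(f)$, and in fact continuity gives a product neighborhood on which $f$ stays strictly below $\alpha$ minus a bit, showing $\epi'(f)$ open; conversely if $\epi'(f)$ is open and nonempty (again nonempty because $\W\ne\emptyset$) then $\emptyset\ne\epi'(f)=\inter(\epi'(f))\subset\inter(\epi(f))$, so (ii) holds and we are back to (i).

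For part~3: assuming $\inter(\epi(f))\ne\emptyset$, part~2 gives that $f$ is continuous on $\W$, and 1(a) already gives $\inter(\epi(f))\subset\epi'(f)$; for the reverse inclusion, any $(x,\alpha)\in\epi'(f)$ has $\alpha>f(x)$, so 1(b) puts it in $\inter(\epi(f))$. I expect the only mildly delicate point to be the bookkeeping with product neighborhoods in $X\times\Real$ — making sure the chosen interval around $\alpha$ genuinely sits inside the epigraph over the chosen neighborhood $V$ — but this is routine once one inserts an intermediate value $\beta$ between $f(x)$ and $\alpha$. No separate argument is needed for convexity of the epigraphs since Proposition~\ref{p1.epigraph} supplies it.
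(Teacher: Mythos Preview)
Your proposal is correct and follows essentially the same approach as the paper: product neighborhoods in $X\times\Real$ for 1(a)--(c), Proposition~\ref{cs-p.cv-cont} to pass from local upper boundedness to continuity, and then the equivalences in part~2 and the equality in part~3 are assembled from 1(a)--(c) exactly as you do. The only cosmetic difference is that for (i)$\Rightarrow$(iii) the paper routes through 1(b) and 1(a) (showing $\inter(\epi(f))$ is a neighborhood of $(x,\alpha)$ contained in $\epi'(f)$), whereas you give a direct product-neighborhood argument; both are fine.
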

  \begin{proof} 1.(a)\; If $(x,\alpha)\in \inter(\epi(f))$, then there exist a neighborhood $U$ of $0\in X$ and $\delta>0$ such that
  $W:=(x+U)\times(\alpha-\delta,\alpha+\delta)\subset \epi(f)$. But then $(x,\alpha-\delta/2)\in W \subset \epi(f)$ so that $f(x)\le \alpha-\delta/2
  <\alpha$, that is, $(x,\alpha)\in\epi'(f)$.

  (b)\;
  Let $W\subset\epi (f)$  be as above. Then, for every $u\in U, \; (x+u,\alpha)\in W\subset \epi(f)$, so that $f(x+u)\le \alpha$ for all $u\in U$,
  which,   by Proposition \ref{cs-p.cv-cont}, implies the continuity of $f$ at $x$.

   (c)\; Suppose that $f$ is continuous at $x\in \W$ and let $\alpha>f(x)$. Then $\delta:=(\alpha-f(x))/2>0$ and there exists a neighborhood $U$ of $0\in X$ such that
   $$
  f(x+u)<f(x+\delta)=\alpha-\delta<\alpha\,,$$
  for all $u\in U$.  It follows that the neighborhood $(x+U)\times (\alpha-\delta,\infty)$ of $(x,\alpha)$ is contained in $\epi(f)$, which implies that $(x,\alpha)\in\inter(\epi(f))$.

  2. Notice that, by Proposition \ref{cs-p.cv-cont}, the continuity of $f$ at a point $x\in\W$ is equivalent to the continuity of $f$ on $\W$.

  (i)$\iff$(ii) follows from the assertions (b) and (c) of point 1 of the proposition.

  (i) $\Ra$ (iii).

  Suppose that $f$ is continuous on $\W$. If $(x,\alpha)\in\epi'(f)$, then $f(x)<\alpha$ so that, by 1, (b) and (a), $(x,\alpha)\in\inter(\epi(f))\subset \epi'(f)$.
It follows that $\inter(\epi(f))$ is a neighborhood of $(x,\alpha)$ contained in $\epi'(f)$, that is, $(x,\alpha)\in\inter(\epi'(f))$. Consequently
$\epi'(f)\subset\inter(\epi'(f))$ and so $\epi'(f)=\inter(\epi'(f))$ is open.

(iii) $\Ra$ (i)

If $\epi'(f)$ is open, then,   $\emptyset\ne\epi'(f)\subset \inter(\epi(f))$ so that (ii) holds, which implies the continuity of $f$.

3. If  $\inter(\epi(f))\ne \emptyset$, then $f$ is continuous on $\W$, so that $\epi'(f)$ is open. The inclusion $\epi'(f)\subset \inter(\epi(f))$ implies
$\epi'(f)\subset \inter(\epi(f))$ and so, taking into account 1.(a), $\epi'(f)= \inter(\epi(f))$.
\end{proof}

The following proposition shows that in the finite dimensional case the convex functions are continuous.

\begin{prop}\label{cs-p.cv-cont-Rn} Let  $f:\W\subseteq \mathbb{R}^{n}\rightarrow
\mathbb{R}$  be a convex function, where the set $\W$  is
open and convex. Then  $f$  is locally bounded from above on $\W$.

Consequently, $f$ is continuous  on $\W.$
\end{prop}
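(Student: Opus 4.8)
The plan is to reduce everything to a single elementary observation: any point of $\W$ has a neighborhood that is the convex hull of finitely many points of $\W$, and on such a hull a convex function is controlled by the (finite) maximum of its values at the vertices. Once $f$ is known to be bounded above on a neighborhood of each point, the continuity assertion is immediate from Proposition \ref{cs-p.cv-cont}.3.

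First I would localize. Fix $x_0\in\W$. Since $\W$ is open, I can choose $\delta>0$ so small that the closed cube $Q=\{x\in\Real^n : \|x-x_0\|_\infty\le\delta\}$ is contained in $\W$ (take $\delta$ with $\sqrt{n}\,\delta$ smaller than the radius of a ball around $x_0$ inside $\W$). Let $v_1,\dots,v_N$ (with $N=2^n$) be the vertices of $Q$. A cube is the convex hull of its vertices, so every $x\in Q$ can be written as $x=\sum_{j=1}^{N}\lambda_j v_j$ with $\lambda_j\ge 0$ and $\sum_{j=1}^{N}\lambda_j=1$. (One could equally well use a nondegenerate simplex with $n+1$ vertices around $x_0$; the cube is just notationally convenient.)

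Next I would upgrade the two-point convexity inequality \eqref{e1} to arbitrary finite convex combinations — the Jensen inequality $f\big(\sum_j\lambda_j v_j\big)\le\sum_j\lambda_j f(v_j)$ for $\lambda_j\ge 0$, $\sum_j\lambda_j=1$ — by a routine induction on the number of terms. Applying it on $Q$ gives, for every $x\in Q$,
$$ f(x)\le\sum_{j=1}^{N}\lambda_j f(v_j)\le\max_{1\le j\le N}f(v_j)=:\beta\,, $$
so $f$ is bounded above by $\beta$ on the neighborhood $Q$ of $x_0$ (in particular on $\inter(Q)$, which is an open neighborhood of $x_0$). Since $x_0\in\W$ was arbitrary, $f$ is locally bounded above on $\W$. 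Finally, applying Proposition \ref{cs-p.cv-cont}.3 at any point $x_0\in\W$, where we have just exhibited a neighborhood on which $f$ is bounded above, yields that $f$ is continuous on $\W$.

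The main (and only) step requiring attention is the passage from \eqref{e1} to the finite Jensen inequality; this is a standard induction and presents no real obstacle. Everything else rests on the finite-dimensional fact that a cube (or simplex) in $\Real^n$ is the convex hull of its finitely many vertices, which is exactly what fails in infinite dimensions and why the extra hypothesis of boundedness (or continuity) is needed there.
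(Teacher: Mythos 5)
Your proof is correct and follows essentially the same route as the paper: both enclose $x_0$ in a hypercube contained in $\W$, write each point as a convex combination of the $2^n$ vertices, apply Jensen's inequality to bound $f$ above by the maximum of its values at the vertices, and then invoke Proposition \ref{cs-p.cv-cont} for continuity. The only additions on your side (the explicit induction for finite Jensen and the simplex alternative) are harmless elaborations of the same argument.
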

\begin{proof} Let us choose $x_{0}\in \W$ and $K\subseteq \W$ be a hypercube
having the center in $x_{0}$.

We are going to prove that $f$ is bounded from above on $K$.

If $v_{1},...,v_{m}$, where $m=2^{n}$, are the vertices of $K$, then
  for each $x\in K$ there exist $%
\lambda _{1},...,\lambda _{m}\in [0,1],\, \overset{m}{\underset{k=1}{%
\sum }}\lambda _{k}=1$, such that $x=\overset{m}{%
\underset{k=1}{\sum }}\lambda _{k}v_{k}.$

On one hand, taking into account Jensen's inequality for convex functions,
we obtain that
\begin{equation*}
f(x)=f(\overset{m}{\underset{k=1}{\sum }}\lambda _{k}v_{k})\leq
\sum_{k=1}^{m}\lambda _{k}f(v_{k})\leq \max_{k\in \{1,2,...,m\}}f(v_{k})\,,
\end{equation*}%
showing that  $f$ is bounded from above on $K$.
 \end{proof}

 A convex function defined on an infinite dimensional normed linear space is
not necessarily locally bounded as the following example shows.

\begin{example}\label{ex.3.1} Let $X$\ be the space of polynomials endowed with the\
norm given by%
\begin{equation*}
\left\Vert P\right\Vert =\max\limits_{x\in \lbrack -1,1]}\left\vert
P(x)\right\vert \text{.}
\end{equation*}

Then the function\textit{\ }$f:X\rightarrow \mathbb{R}$\textit{\ }given by\
\begin{equation*}
f(P)=P^{\prime }(1)
\end{equation*}%
for each\textit{\ }$P\in X$ is convex (even linear) but it is not locally bounded.\end{example}

 Consider for each    $n\in \mathbb{N}$ the polynomial
\begin{equation*}
P_{n}(x)=\frac{1}{\sqrt{n}}x^{n}\text{.}
\end{equation*}

Then
\begin{equation*}
\|P_n\|=\frac1{\sqrt n}\to 0,\quad n\to \infty,
\end{equation*}%
but
\begin{equation*}
  f(P_{n}) =\sqrt n\to \infty,\quad  n\to \infty\,,
\end{equation*}%
proving the discontinuity of the functional $f$.

\begin{remark}\label{re.cont-cv-fcs} In fact a normed space $X$ is finite dimensional if and only if every linear functional on $X$ is continuous. On the other hand there exists infinite dimensional locally convex spaces $X$ such that every convex function on $X$ is continuous.
\end{remark}

Indeed, it is known that every linear functional on a finite dimensional topological vector space is continuous. If $X$ is an infinite dimensional  normed space then it contains a linearly independent set $D=\{e_n : n\in\Nat\}\subset S_X.$ Consider a Hamel basis $E$ of $X$ containing this set and define $\vphi:E\to \Real$ by $\vphi(e_n)=n,\,n\in\Nat,$ and $\vphi(e)=0$ for $e\in E\sms D,$ extended   by linearity to whole $X$. Then $\sup\{\vphi(x) : x\in X,\, \|x\|\le 1\}\ge \sup\{\vphi(e_n) : n\in\Nat\}=\infty,$ proving the discontinuity of $\vphi.$

Concerning the second affirmation, let $X$ be an infinite dimensional vector space   equipped  with the finest locally convex topology $\tau$. A  neighborhood basis at 0 for  this topology is formed by all absolutely convex absorbing subsets of $X$. A family of seminorms generating this topology is formed of the Minkowski functionals of these neighborhoods. Since every seminorm $p$ on $X$ is the Minkowski functional of the absolutely convex absorbing subset $B_p=\{x\in X: p(x)\le 1\}$, it follows that $\tau$ is generated by the family $P$ of all seminorms on $X$. It is in fact characterized by this property: the finest locally convex topology on a vector space $X$ is the locally convex topology $\tau$ on $X$ such that  every seminorm on $X$ is $\tau$-continuous. For  the finest locally convex topology on a vector space,  see \cite[p. 56 and Exercise 7, p. 69]{Schaef-TVS} and \cite[pp. 3--4]{Bonet}. It follows that every convex absorbing subset of $X$ is a neighborhood of 0 and  every linear functional is continuous on $X$. Also  every convex function defined on a nonempty open convex subset $\W$ of  $X$ is   continuous on $\W$.

For the convenience of the reader we sketch the proof following   \cite{co-mun76}, where further details can be found.

\smallskip
\emph{Fact} 1. \emph{If $C$ is a convex subset of vector space such that $ 0\in C$, then $\alpha C\subset \beta C$
for all $0<\alpha<\beta$.}

\smallskip

Indeed, by the convexity of $C$ and the fact that $0\in C$,
$$
\alpha c =\beta\left(\frac\alpha\beta c+\left(1-\frac\alpha\beta\right)\cdot 0\right)\in \beta C\,,$$
for all   $c\in C$.
\smallskip

\emph{Fact} 2. \emph{Let $Y$ be a vector space  equipped with the finest local convex topology $\tau$. Then every  convex absorbing subset $C$ of $Y$ is a neighborhood of} 0.
\smallskip

The set $D:=C\cap(-C)$  is absolutely convex and contains 0. For $x\in Y$ there exist $\alpha,\beta>0$ such that $x\in\alpha C$ and $-x\in \beta C\iff x\in\beta(-C)$.
Then, by Fact 1, $x\in \gamma C\cap \gamma (-C)$, where $\gamma=\max\{\alpha,\beta\}$. This implies that there exist $c,c'\in C$ such that $x=\gamma c$ and $x=\gamma (-c')$. But then $c=-c'\in -C$, that is, $x\in\gamma D$. Since $D$ is absolutely convex and absorbing  it  is a neighborhood of 0 as well  as $C\supset D$.

\smallskip
\emph{Fact} 3. \emph{Let $X$ be a vector space. Consider the space $X\times\Real$ equipped with the finest locally convex topology and $X$ with the induced topology. If   $\W$ is an open convex subset of  $X$, then every convex function $f:\W\to \Real$ is continuous}.
\smallskip

For more clarity we denote by $\theta$ the null element in $X$.

We can suppose, passing, if necessary, to the set $\wtil \W:=\W-x_0$ and to the function $\tilde f(x):=f(x+x_0)-f(x_0)-1$,\,  $x\in\wtil\W$, that $\theta\in \W$ and $f(\theta)<0$.

The convex function $f$ is continuous on $\W$ if and only if it is continuous  at $\theta\in\W$. In its turn, by Proposition \ref{p2.epigraph},  this holds if  the strict epigraph $\epi'(f):=\{(x,\alpha)\in X\times \Real : f(x)<\alpha\}$ is a neighborhood of $(\theta,0)$ in $X\times \Real$.    By Fact 2, $\epi'(f)$ is a neighborhood of $(\theta,0)$ in $X\times \Real$ if  it is convex and absorbing in $X\times \Real$.

The convexity of $\epi'(f)$ follows from the convexity of $f$.

Let us  show that $\epi'(f)$ is absorbing. Consider first the case $(\theta,\alpha)\in X\times \Real$. If $\alpha>f(\theta)$, then  $(\theta,\alpha)\in\epi'(f)$. If $\alpha\le f(\theta)<0$, then, as $\lim_{\gamma\searrow 0}\gamma\alpha=0$, it follows $\gamma \alpha>f(\theta) $ for sufficiently small positive $\gamma$, that is,  $\gamma(\theta,\alpha)=(\theta,\gamma\alpha)\in\epi'(f)$. Let now $(x,\alpha)\in X\times \Real$ with $x\ne \theta$. Then $I:=\{t\in\Real : tx\in\W\}$ is an open interval in $\Real$ and $g:I\to\Real,\, g(t):=f(tx),\, t\in I,$ is convex, and so continuous. But then $\epi'(g)$ is an open convex subset of $\Real^2$. Since $g(0)=f(\theta)<0$, it follows that $(0,0)\in\epi'(g)$, hence, by Proposition \ref{p2.epigraph}, $\epi'(g)$ is a neighborhood of $(0,0)$, and so an absorbing set in $\Real^2$. Let $\lbda>0$ be such that $(\lbda,\lbda\alpha)=\lbda(1,\alpha)\in\epi'(g)$. The equivalences
\begin{align*}
 (\lbda,\lbda\alpha)\in\epi'(g)&\iff  g(\lbda)<\lbda\alpha\iff f(\lbda x)<\lbda\alpha\\&\iff
 \lbda(x,\alpha)=(\lbda x,\lbda\alpha)\in\epi'(f)\,,\end{align*}
 show that  $\lbda(x,\alpha)\in\epi'(f)$ and so $\epi'(f)$ is an absorbing subset of $X\times \Real$.

  \section{Some  further properties of  convex vector-functions}\label{S.cv-vect-fcs}

Now we shall present, following \cite{papag83a}, some further results on $C$-convex mappings.

Let $X$ be a TVS, $Y$ a vector space ordered by a cone $C$ and $\W$ an open subset of $X$.
We say that a mapping $f:\W\to Y$ is \emph{locally $o$-bounded} on $\W$ if every point in $\W$ has a neighborhood on which $f$ is $o$-bounded.

The following proposition is the analog of  Proposition \ref{cs-p.cv-cont} with boundedness replaced by $o$-boundedness.
\begin{prop}\label{p1.o-bd-cv}
  Let $X,Y$ be as above and suppose that $\W\subset X$ is open and convex and $f:\W\to Y$ a $C$-convex mapping.
   \ben
   \item[\rm 1.] If $f$   upper $o$-bounded on a neighborhood of some point $x_0\in\W$, then $f$ is locally $o$-bounded on $\W$.
   \item[\rm 2.] If $Y$ is a TVS  ordered by a  normal cone $C$ and $f$ is $o$-bounded on a neighborhood of a point $x_0\in\W$, then  $f$ is continuous at $x_0.$
    \item[\rm 3.] If $Y$ is a TVS  ordered by a  normal cone $C$ and $f$ is  upper $o$-bounded on a neighborhood of some point $x_0\in \W$, then $f$ is continuous on $\W$.
 \een   \end{prop}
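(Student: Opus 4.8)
The plan is to transcribe, almost line for line, the proofs of the scalar assertions in Proposition \ref{cs-p.cv-cont}, replacing ``bounded from above'' by ``upper $o$-bounded'' and the absolute-value estimates by the order-interval and normal-cone tools recalled in Section \ref{S.OVS}; only the very last step will really use the normality hypothesis. For part 1, I would reuse verbatim the geometric construction in the proof of Proposition \ref{cs-p.cv-cont}.2. Fix a balanced $0$-neighborhood $U$ in $X$ with $x_0+U\subset\W$ and some $b\in Y$ with $f(x_0+u)\le b$ for all $u\in U$. Given $x\in\W$, openness of $\W$ gives $\alpha>1$ with $x_1:=x_0+\alpha(x-x_0)\in\W$; with $t:=1/\alpha\in(0,1)$ one has $x=(1-t)x_0+tx_1$, and $V:=x+(1-t)U$ is a neighborhood of $x$ contained in $\W$ by convexity. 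For $u\in U$, $C$-convexity yields
\[
f\bigl(x+(1-t)u\bigr)=f\bigl(tx_1+(1-t)(x_0+u)\bigr)\le tf(x_1)+(1-t)f(x_0+u)\le M:=tf(x_1)+(1-t)b .
\]
Since $U$ is balanced, $x-(1-t)u\in V\subset\W$ as well, so the midpoint inequality $f(x)\le\frac12 f\bigl(x+(1-t)u\bigr)+\frac12 f\bigl(x-(1-t)u\bigr)$ combined with the previous bound gives $f\bigl(x-(1-t)u\bigr)\ge 2f(x)-M$ on $V$. Thus $f$ is $o$-bounded on $V$, and $x$ being arbitrary, $f$ is locally $o$-bounded on $\W$. (Note that, unlike Proposition \ref{cs-p.cv-cont}.2, which only asserts local upper boundedness, the definition of ``locally $o$-bounded'' is two-sided, whence the extra midpoint step.)

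For part 2 I would follow the proof of Proposition \ref{cs-p.cv-cont}.1. Pick a balanced $0$-neighborhood $U$ with $x_0+U$ inside the given neighborhood and $b\in Y$ with $f(x_0+u)\le b$ for all $u\in U$ — only the upper part of the $o$-bound is needed — and put $c:=b-f(x_0)\ge 0$. For $0<\eps<1$ and $u\in U$ one has $\pm\eps u\in U$, and $C$-convexity gives, as in the scalar case,
\[
f(x_0+\eps u)-f(x_0)\le\eps\bigl(f(x_0+u)-f(x_0)\bigr)\le\eps c ,
\]
while the midpoint inequality $f(x_0)\le\frac12 f(x_0+\eps u)+\frac12 f(x_0-\eps u)$ together with the same inequality applied to $-u$ gives $f(x_0)-f(x_0+\eps u)\le f(x_0-\eps u)-f(x_0)\le\eps c$; hence $-\eps c\le f(x_0+v)-f(x_0)\le\eps c$ for all $v\in\eps U$. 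Now normality enters: by Theorem \ref{t2.char-normal-cone}, $Y$ has a basis of absolutely convex $C$-full $0$-neighborhoods. Given such a $W$, choose $\eps_0\in(0,1)$ with $\eps c\in W$ for $0<\eps\le\eps_0$ (possible since $W$ is absorbing and balanced); then $\pm\eps c\in W$, and $C$-fullness gives $[-\eps c,\eps c]_o\subset W$, so $f(x_0+v)-f(x_0)\in W$ for every $v\in\eps U$. As $\eps U$ is a $0$-neighborhood in $X$, $f$ is continuous at $x_0$. (Equivalently, this last step can be run through the $\gamma$-absolutely monotone seminorms of Theorem \ref{t2.char-normal-cone}.)

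Part 3 is then immediate: by part 1, upper $o$-boundedness of $f$ near $x_0$ makes $f$ locally $o$-bounded on all of $\W$, and applying part 2 at each point of $\W$ gives continuity there. As for the difficulty, the propagation arguments in parts 1 and 2 are literally the scalar ones, so the single genuinely new point is the closing step of part 2, where the normality of $C$ — via the $C$-full neighborhood basis of Theorem \ref{t2.char-normal-cone} — is exactly what converts ``the increment lies in the small order interval $[-\eps c,\eps c]_o$'' into ``the increment lies in the prescribed neighborhood $W$''. I do not expect anything beyond routine bookkeeping elsewhere.
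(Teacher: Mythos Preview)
Your proposal is correct and follows essentially the same route as the paper: parts~1 and~3 match the paper's argument almost verbatim (the paper also does the midpoint step to pass from upper $o$-bounded to two-sided, then invokes the propagation from Proposition~\ref{cs-p.cv-cont}.2), and part~3 is in both cases the concatenation of~1 and~2.

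The only noteworthy difference is in part~2. The paper first reduces to $x_0=0$, $f(0)=0$, observes that normality makes the $o$-bounded set $f(U)$ topologically bounded (Proposition~\ref{p.o-bd}), picks $\lambda\in(0,1)$ with $\lambda f(U)\subset V$ for a given $C$-full neighborhood $V$, and then uses convexity to sandwich $f(\lambda u)$ between $-\lambda f(-u)$ and $\lambda f(u)$, both in $V$. You instead transcribe the scalar proof directly: from the upper bound alone you obtain $-\eps c\le f(x_0+v)-f(x_0)\le\eps c$ on $\eps U$, and then choose $\eps$ so small that $\pm\eps c$ lie in the prescribed $C$-full neighborhood $W$. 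Both arguments end with the same $C$-fullness step; yours is marginally more elementary in that it avoids the detour through topological boundedness and makes transparent that only the upper $o$-bound is actually used, while the paper's version highlights the role of Proposition~\ref{p.o-bd} as an independent ingredient.
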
\begin{proof} 1.\;
    Let $U$ be a balanced 0-neighborhood and let $y\in Y$ be such that $x_0+U\subset \W$ and $f(x_0+u)\le y$ for all $u\in U$.
    Then $-u\in U$ and
    $$
    f(x_0)\le \frac12[f(x_0+u)+f(x_0-u)]\, $$
    implies
    $$
    f(x_0)-f(x_0+u)\le f(x_0-u)-f(x_0)\le y-f(x_0)\,.$$

    It follows
    $$f(x_0+u)\ge 2f(x_0)-y\,,$$
   for all $u\in U$,  showing that $f$ is also lower $o$-bounded on $x_0+U$.

   The fact that $f$ is locally $o$-bounded on $\W$ can be proved similarly to the proof of assertion 2 in Proposition \ref{cs-p.cv-cont}.

   2.\; Suppose first that $0\in\W$ and $f(0)=0$.  Let $U\subset \W$ be a balanced  neighborhood of 0 such that $f$ is $o$-bounded on $U$, that is, the set $f(U)$ is $o$-bounded in $Y$.  Since the cone $C$ is normal it follows that $f(U)$ is topologically bounded. Let $V$ be a balanced $C$-full neighborhood of $f(0)=0\in Y$.
   The boundedness of $f(U)$ implies the existence of $\lbda>0$ such that $\lbda f(U)\subset V$. Since $V$ is balanced we can suppose further that $\lbda <1$.

   By the convexity of $f$
   $$
   f(\lbda u)=f((1-\lbda)0+\lbda u)\le (1-\lbda)f(0)+\lbda f(u)=\lbda f(u)\in V\,,$$
   for all $u\in U$.

   Also
   $$
   0=f(0)\le\frac 12[f(-\lbda u)+f(\lbda u)]$$
   implies
   $$ f(\lbda u)\ge -f(-\lbda u)=-f(\lbda (-u))\ge -\lbda f(-u)\in V\,.$$

   Consequently, $\,-\lbda f(-u)\le f(\lbda u)\le \lbda f(u),\,$ with  $\,-\lbda f(-u),  \lbda f(u)\in V.$  Since $V$ is $C$-full, this implies $f(\lbda u)\in V$ for all $u\in U$. Since $\lbda U$ is a neighborhood of $0\in X$ and $f(\lbda U)\subset V$, this proves the continuity of $f$ at $0$.

   In general, for $x_0\in \W$ consider the set $\wtil W=-x_0+\W$ and the function $\wtil f:\wtil\W \to Y$ given by $\wtil f(z)=f(x_0+z)-f(x_0)$. It follows that $\wtil f$  is $o$-bounded on a neighborhood $U\subset \wtil \W$ of $0\in X$, so that it is continuous at 0, implying the continuity of the mapping $f$ at $x_0\in\W$.

      The assertion from 3 follows from 1 and 2.
   \end{proof}

   In the finite dimensional case one obtains the following extension of Proposition \ref{cs-p.cv-cont-Rn}.

   \begin{corol}\label{c1.o-cont-Rn}
   Let $\Om$ be a nonempty open convex subset of $\Real^n$ and $Y$ a TVS ordered by a normal cone $C$. Then every $C$-convex function
   $f:\Om\to Y$ is locally $o$-bounded, and so continuous, on $\Om$.
     \end{corol}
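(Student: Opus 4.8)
The plan is to reduce the finite-dimensional vector-valued case to the two previously established ingredients: the scalar result that convex functions on open convex subsets of $\Real^n$ are locally bounded from above (Proposition \ref{cs-p.cv-cont-Rn}), and the vector-valued continuity criterion of Proposition \ref{p1.o-bd-cv}, which says that a $C$-convex mapping into a TVS ordered by a normal cone is continuous on $\W$ as soon as it is upper $o$-bounded on a neighborhood of a single point. Thus it suffices to produce \emph{one} point of $\Om$ near which $f$ is upper $o$-bounded; local $o$-boundedness everywhere and continuity then follow automatically from Proposition \ref{p1.o-bd-cv}, parts 1 and 3.

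To get upper $o$-boundedness at a point, I would mimic the argument of Proposition \ref{cs-p.cv-cont-Rn}. Fix $x_0\in\Om$ and choose a closed hypercube (or simplex) $K\subset\Om$ centered at $x_0$, with vertices $v_1,\dots,v_m$, $m=2^n$. For $x\in K$ write $x=\sum_{k=1}^m\lambda_k v_k$ with $\lambda_k\in[0,1]$, $\sum_k\lambda_k=1$. Applying Jensen's inequality for $C$-convex mappings — which follows by induction from the defining inequality \eqref{e1} — gives
$$
f(x)=f\Big(\sum_{k=1}^m\lambda_k v_k\Big)\le \sum_{k=1}^m\lambda_k f(v_k).
$$
Now I want to bound the right-hand side above, in the order of $Y$, by a single element independent of $x$. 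Since the finite set $\{f(v_1),\dots,f(v_m)\}$ has an upper bound in $Y$ only if we can supremize it, and $Y$ is merely an ordered vector space (not assumed to be a lattice), the natural choice is $y:=\sum_{k=1}^m f(v_k)^{+}$-type object — but that again needs lattice structure. Instead I would use the following elementary fact valid in any ordered vector space: for finitely many elements $y_1,\dots,y_m\in Y$, the element $y:=y_1+\dots+y_m+\big(\sum_{k}( -y_k)\big)$ is $\ge$ — no; cleanest is simply to pick \emph{any} $y\in Y$ with $f(v_k)\le y$ for all $k$, which exists because a finite subset of $Y$ is trivially $o$-bounded: indeed take $y = \sum_{k=1}^m\big(f(v_k)+w_k\big)$ where $w_k\in C$ is chosen so that the partial sums dominate — in fact it is enough to note that for a finite family one may set $y=\sum_{k} z_k$ with $z_k\ge f(v_k)$ and $z_k\ge 0$, whence each convex combination $\sum_k\lambda_k f(v_k)\le\sum_k\lambda_k z_k\le\sum_k z_k=y$, using $\lambda_k z_k\le z_k$ (valid since $z_k\ge 0$ and $\lambda_k\le 1$). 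This shows $f(x)\le y$ for all $x\in K$, i.e. $f$ is upper $o$-bounded on the neighborhood $K$ of $x_0$.

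With this single point of upper $o$-boundedness in hand, Proposition \ref{p1.o-bd-cv}.1 yields that $f$ is locally $o$-bounded on all of $\Om$, and Proposition \ref{p1.o-bd-cv}.3 (normality of $C$ is in the hypothesis) yields continuity of $f$ on $\Om$. The main obstacle — and the only real point requiring care — is the step above: producing, inside a general ordered vector space with no lattice or topology, a common upper bound for the finitely many values $f(v_k)$ and controlling convex combinations of them. The resolution is the observation that replacing each $f(v_k)$ by some $z_k$ with $z_k\ge f(v_k)$ and $z_k\ge 0$ (add an element of $C$, then add another element of $C$ to make it positive — possible since $C$ is a cone, so $C+C\subset C$ and we may translate), the inequalities $\lambda_k z_k\le z_k$ hold coordinate-free from (OVS2) and $0\le\lambda_k\le 1$, and summing gives the desired bound $y=\sum_k z_k$. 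Everything else is a direct citation of earlier results.
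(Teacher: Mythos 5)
Your overall strategy coincides with the paper's: the paper's proof of Corollary \ref{c1.o-cont-Rn} is a one-line instruction to transpose the hypercube/Jensen argument of Proposition \ref{cs-p.cv-cont-Rn} to the order $\le_C$, after which Proposition \ref{p1.o-bd-cv} supplies local $o$-boundedness and continuity, exactly as you arrange it. You have also correctly isolated the one step where the transposition is not automatic: in the scalar proof the convex combination $\sum_k\lambda_k f(v_k)$ is dominated by $\max_k f(v_k)$, and in an ordered vector space this maximum must be replaced by a common order upper bound of the finitely many values $f(v_1),\dots,f(v_m)$.

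However, your resolution of that step has a genuine gap. You assert that a finite subset of $Y$ is ``trivially $o$-bounded'' and, concretely, that for each $k$ one can choose $z_k\in Y$ with $z_k\ge f(v_k)$ and $z_k\ge 0$. Such a $z_k$ exists if and only if $f(v_k)\in C-C$; when the cone is not generating this can fail, and then the finite set $\{f(v_1),\dots,f(v_m)\}$ need not have any upper bound at all. For instance, take $Y=\Real^2$ ordered by $C=\Real_+\times\{0\}$, which is closed and normal (closed Euclidean balls are already $C$-full, since $(B_r+C)\cap(B_r-C)=B_r$). The elements $(0,0)$ and $(0,1)$ have no common upper bound for this order, and the linear --- hence $C$-convex --- map $f(t)=(0,t)$ from $\Real$ into $Y$ is not upper $o$-bounded on any nonempty open set. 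So your construction cannot be carried out without an additional hypothesis (e.g.\ $C$ generating, or $Y$ a vector lattice), and the same example shows that the ``locally $o$-bounded'' conclusion of the corollary as stated is itself delicate. In fairness, the paper's own \emph{mutatis mutandis} proof silently passes over exactly this point; the rest of your argument --- Jensen's inequality for $C$-convex maps, the estimate $\lambda_k z_k\le z_k$ for $z_k\ge 0$ and $0\le\lambda_k\le 1$, and the appeal to parts 1 and 3 of Proposition \ref{p1.o-bd-cv} --- is correct once a common upper bound for the vertex values is available.
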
\begin{proof}
       The proof of Proposition \ref{cs-p.cv-cont-Rn} can be transposed \emph{mutatis mutandis} to this situation, replacing the order relation in $\Real$ by the order relation $\le_C$ generated by the normal cone $C$.
     \end{proof}

  Carioli and  Vesel\'y \cite{vesely13} showed that the normality of the cone $C$ is, in some sense,  necessary for the continuity of upper $o$-bounded convex vector-functions.
   \begin{theo}\label{t.Vesely}
     Let $I\subset\Real$ be an open interval, $X$ a (nontrivial) locally convex space,
$\W\subset  X$ an open, convex set and $Y$ a  Banach space ordered by a closed  cone $C$.
 The following assertions are equivalent.
 \ben
\item[\rm 1.]  The cone $C$ is normal.
\item[\rm 2.]  Every convex function $\vphi: I\to  Y$ is continuous.
\item[\rm 3.]  Every convex function $\vphi: I \to Y$ is locally norm bounded.
\item[\rm 4.] Every convex function $f : \W\to Y, $ which is upper $o$- bounded  on some open
subset of $\W$, is continuous.
\item[\rm 5.] Every convex function $f:\W\to Y$, which is upper $o$- bounded on some nonempty
open subset of $\W$, is locally norm bounded.
  \een \end{theo}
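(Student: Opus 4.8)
We sketch a proof. The plan is to establish the two chains
\[
1\ \Longrightarrow\ 2\ \Longrightarrow\ 3,\qquad 1\ \Longrightarrow\ 4\ \Longrightarrow\ 5,
\]
and to close them by proving $3\Rightarrow 1$ and $5\Rightarrow 1$; these last two arrows are contrapositives of one and the same construction which, assuming $C$ is \emph{not} normal, exhibits a convex mapping violating the conclusion of~3 and, after composition with a continuous linear functional on $X$, also that of~5.

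\emph{The implications not involving non-normality.} For $1\Rightarrow 2$ I would take a convex $\varphi\colon I\to Y$, fix $t_{0}\in I$, choose $t_{1}<t_{0}<t_{2}$ in $I$, and put $a:=(\varphi(t_{0})-\varphi(t_{1}))/(t_{0}-t_{1})$ and $b:=(\varphi(t_{2})-\varphi(t_{0}))/(t_{2}-t_{0})$. By the monotonicity of the slope (Proposition~\ref{p.cv-fcs2}.2) one has $a\le b$ and, for every $t$ between $t_{1}$ and $t_{2}$ with $t\ne t_{0}$, the difference $\varphi(t)-\varphi(t_{0})$ lies, for the order $\le_{C}$, between $(t-t_{0})a$ and $(t-t_{0})b$. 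Since $C$ is normal, $Y$ has a basis of $C$-full $0$-neighbourhoods (Theorem~\ref{t2.char-normal-cone}); given such a $V$, for $|t-t_{0}|$ small both of those two elements lie in $V$, hence, $V$ being $C$-full, so does $\varphi(t)-\varphi(t_{0})$, which proves continuity at $t_{0}$ and therefore on $I$. The arrows $2\Rightarrow 3$ and $4\Rightarrow 5$ are immediate, a mapping into a normed space that is continuous being norm bounded on a neighbourhood of each point of its domain. Finally $1\Rightarrow 4$ is exactly Proposition~\ref{p1.o-bd-cv}.3: the hypothesis of~4 provides an open subset of $\Omega$, hence a neighbourhood of each of its points, on which $f$ is upper $o$-bounded.

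\emph{Step~1: non-normality yields an unbounded order interval.} This is also where Proposition~\ref{p.o-bd}.2 gets proved. If $C$ is not normal then for every $\gamma>0$ there exist $0\le a\le b$ in $Y$ with $\|a\|>\gamma\|b\|$, for otherwise the norm of $Y$ would be $\gamma$-monotone and $C$ normal by Theorem~\ref{t2.char-normal-cone}. Rescaling, for each $n$ I obtain $0\le a_{n}\le b_{n}$ with $\|b_{n}\|=2^{-n}$ and $\|a_{n}\|$ as large as desired (say $\|a_{n}\|\ge n$, and faster if Step~2 requires it). As $Y$ is complete and $C$ is closed, $w:=\sum_{n}b_{n}$ converges, $w\in C$, and $w-b_{n}=\sum_{m\ne n}b_{m}\in C$; hence $0\le a_{n}\le b_{n}\le w$, i.e.\ $a_{n}\in[0,w]_{o}$ for all $n$, so the order interval $[0,w]_{o}$ contains elements of arbitrarily large norm. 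Contrapositively, if every order interval of a Banach space with a closed cone is topologically bounded then the cone is normal, which is Proposition~\ref{p.o-bd}.2.

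\emph{Step~2: the counterexample, and the main obstacle.} Using the unbounded order interval $[0,w]_{o}$ of Step~1, the construction I have in mind produces, on any open interval $J$ and for any interior point $t^{\ast}\in J$, a convex function $\varphi\colon J\to Y$ which is affine outside some neighbourhood $J'$ of $t^{\ast}$, is bounded above for $\le_{C}$ on $J'$ by a single element, yet satisfies $\|\varphi(t_{k})\|\to\infty$ for a suitable sequence $t_{k}\to t^{\ast}$. The guiding principle is the sandwich of $1\Rightarrow 2$ read backwards: convexity confines $\varphi$ near $t^{\ast}$ to order intervals $[\ell(t),u(t)]_{o}$ whose endpoints tend to $\varphi(t^{\ast})$ in norm, but by Step~1 such norm-shrinking order intervals still contain points of arbitrarily large norm, and the break-point values $\varphi(t_{k})$ are chosen among these, nested so as to satisfy all the three-point convexity inequalities~\eqref{eq2.cv-fcs2}, including those straddling $t^{\ast}$. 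Taking $J=I$ gives a convex function on $I$ that is not locally norm bounded, contradicting~3. For~5 one uses that $X$ is nontrivial: pick $\ell\in X^{*}\setminus\{0\}$, translate $\Omega$ so that $0\in\Omega$, apply the construction with $J=\ell(\Omega)$ and $t^{\ast}=0$, and set $f:=\varphi\circ\ell$; then $f$ is convex and upper $o$-bounded on the nonempty open set $\ell^{-1}(J')\cap\Omega$, whereas, the nonzero functional $\ell$ being an open map, $f$ fails to be norm bounded on any neighbourhood of any point of $\ell^{-1}(0)\cap\Omega$, contradicting~5. The whole weight of the proof lies in Step~2: choosing the vertex values inside the nested, norm-wild order intervals so that the resulting piecewise-affine map is simultaneously convex across $t^{\ast}$ and norm-unbounded there is the only genuinely delicate point.
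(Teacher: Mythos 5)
Your architecture matches the paper's: the forward implications via Proposition \ref{p1.o-bd-cv} (and a direct normality/slope argument for $1\Rightarrow 2$, which is fine), and the closing implications $3\Rightarrow 1$, $5\Rightarrow 1$ by a single counterexample built from a norm-unbounded order interval and then composed with a nonzero continuous linear functional. Your Step~1 is correct and even slightly cleaner than the paper's (the paper takes $0\le x_n\le y_n$ with $\|y_n\|=1$, $\|x_n\|=3^n$, $w=\sum_k 2^{-k}y_k$ and exhibits $z_n\in[0,w]_o$ with $\|z_n\|\ge(3/2)^n-O(1)$; your version puts the $a_n$ directly into $[0,w]_o$). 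But Step~2 is a genuine gap, and you say so yourself: you assert that vertex values can be ``chosen among these, nested so as to satisfy all the three-point convexity inequalities'' without exhibiting a choice that works, and this is precisely the whole content of the counterexample. As stated, nothing in your sketch rules out that every selection of norm-large points from shrinking order intervals violates some convexity inequality.

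The paper resolves this with a concrete device you should supply. Fix $\lambda\in(0,1)$ and $1<\alpha<\lambda^{-1}(1-\lambda+\lambda^2)$, and set $\Delta_n:=[\lambda^{2n}w,\;\alpha\lambda^{2n}w]_o$. Each $\Delta_n$ is a translate of a positive multiple of $[0,w]_o$, hence norm-unbounded, so one may pick $w_n\in\Delta_n$ with $\|w_n\|>n$; the condition on $\alpha$ forces $\alpha\lambda^{2n+2}<\lambda^{2n}$, so the $\Delta_n$ decrease in the order as $n$ grows. Define $\vphi\equiv 0$ on $(-\infty,0]$, $\vphi(\lambda^n)=w_n$, affine on each $[\lambda^{n+1},\lambda^n]$ and beyond $1$. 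Convexity reduces to monotonicity of the slopes $\mu_n=(w_n-w_{n+1})/(\lambda^n-\lambda^{n+1})$, i.e.\ to $(1+\lambda)w_{n+1}\le\lambda w_n+w_{n+2}$, and this follows from $w_{n+1}\le\alpha\lambda^{2n+2}w$, $w_n\ge\lambda^{2n}w$, $w_{n+2}\ge\lambda^{2n+4}w$ exactly because $\alpha\le\lambda^{-1}(1-\lambda+\lambda^2)$; the inequalities straddling $0$ hold since $\vphi\ge 0$ everywhere. This $\vphi$ takes values in $[0,w]_o$ near $0$ (so it is upper $o$-bounded there) yet $\|\vphi(\lambda^n)\|\to\infty$, which kills assertion~3, and $f=\vphi\circ x^*$ with $x^*\ne 0$ kills assertion~5 as you describe. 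Until you specify such a selection rule and verify the slope monotonicity, your proof of the two hard implications is incomplete.
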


  The proof follows the following steps.

  \smallskip

  Step 1. \emph{If $\,Y$ is a Banach space ordered by a closed cone $C$ which is not normal, then there exists $w\ge 0$ in $Y$ such that
 the order interval $[0,w]_o$ is norm-unbounded}.

 \smallskip

 Since $C$ is not normal there exist two sequences $(x_n)$ and $(y_n)$ in $Y$ such that $0\le x_n\le y_n,\, \|y_n\|=1$ and $\|x_n\|=3^n$. One takes
 $w=\sum_{k=1}^\infty2^{-k}y_k$ and $$z_n=w-\sum_{k=1}^{n-1}2^{-k}y_k-2^{-n}x_n=w-\sum_{k=1}^{n}2^{-k}y_k+2^{-n}(y_n-x_n).$$ Then
 $0\le z_n\le w$ and
 $$
 \|z_n\|\ge \left(\frac32\right)^n-\big\|w-\sum_{k=1}^{n-1}2^{-k}y_k\big\|\lra\infty\;\mbx{ as }\;\;n\to\infty\,.$$

 \smallskip

  Step 2. \emph{Let $Y$ and $C$ be as in Step 1.  Then there exists a continuous convex function $\vphi:\Real\to Y$ locally  upper $o$-bounded on  $\Real$ which is norm-unbounded on every neighborhood of 0}.

 \smallskip
 Let $[0,w]_o$ the norm-unbounded interval given by Step 1. Then the interval $[\alpha w,\beta w]_o$ is also norm-unbounded for every $0\le \alpha<\beta$. Take the numbers $\lbda,\alpha\,$ with $\, \lbda\in (0,1)\,$ and $1<\alpha<\lbda^{-1}(1-\lbda+\lbda^2)$.  Since $1-\lbda+\lbda^2>\lbda,\,\alpha$ is well defined. Consider the intervals $\Delta_n:=\left[\lbda^{2n}w,\alpha\lbda^{2n}w\right]_o$ for $n\in\Nat_0:=\Nat\cup\{0\}$. Since $\alpha\lbda<1-\lbda+\lbda^2<1$, it follows
 $\alpha\lbda^{2n+2}w\le \lbda^{2n}w\,$ and $\, \alpha\lbda^{2n+2}w\ne \lbda^{2n}w$, so that the intervals $\Delta_n$ are pairwise disjoint and $z'\le z$ for $z\in \Delta_n,\, z'\in \Delta_{n'}\,$ with $n<n'$.

 Choose $w_n\in\Delta_n$ such that $\|w_n\|>n$ and define the function $\vphi:\Real\to Y$ by $\vphi(t)=0$ for $t\in(-\infty,0]$, $\vphi(\lbda^k)=w_k,\,k\in\Nat_0,$ and affine on each interval $[\lbda^{n+1},\lbda^n].$  Then $\vphi(t)=\vphi_n(t)$ for $t\in [\lbda^{n+1},\lbda^n]$, where

 $$
 \vphi_n(t)=\frac{\lbda^{n}w_{n+1}-\lbda^{n+1}w_{n}}{\lbda^{n}-\lbda^{n+1}}+\mu_n t\,,
\; \mbx{  with }\;
 \mu_n=\frac{w_{n}-w_{n+1}}{\lbda^{n}-\lbda^{n+1}}\,.$$

 Put also $\vphi(t)=\vphi_0(t)$ for $t>1$. One shows that $\mu_{n+1}\le \mu_n$ and that the so defined function $\vphi$  is $C$-convex. Since $\|\vphi(\lbda^n)\|=\|w_n\|\to \infty$, it is norm-unbounded on every neighborhood of $0\in\Real$. Since it takes values in $[0,w]_o$,
 it is $o$-bounded, and so locally $o$-bounded on $\Real$.

  \smallskip

 Step 3. \emph{Let $X$ be a nontrivial Hausdorff locally convex space, and  $Y$ and $C$  as in Step 1.  Then there exists a continuous convex function $f:X\to Y$ which locally  upper $o$-bounded on some neighborhood of $0$ and norm-unbounded on every neighborhood of 0}.

 \smallskip

 Let $[0,w]_o$ be the norm-unbounded interval given by Step 1 and  $\vphi:\Real\to Y$ the convex function given by Step 2. For a fixed element $v\in X\sms\{0\}$  there exists a continuous linear functional $x^*\in X^*$ such that $x^*(v)=1$. Define the function $f:X\to Y$ by $f(x)=\vphi(x^*(x)),\, x\in X.$ Then $f$ is convex,  continuous and
  $$
  \|f(\lbda^nv)\|=\|\vphi(\lbda^n\|=\|w_n\|\to \infty\;\mbx { as }\; n\to \infty\,.$$

  The function $f$ is order  bounded on every neighborhood $V_\epsic $ of $0\in X$ of the form $V_\epsic=\{x\in X :|x^*(x)|<\epsic\},\, \epsic >0 $.

\section{Lipschitz properties of convex vector-functions}

In this section we shall prove some results on Lipschitz properties for convex vector-functions, meaning convex functions with respect to a cone.

\subsection{Convex functions on locally convex spaces}

We define first Lipschitz functions between locally convex spaces.

\begin{defi}\label{def1.Lip-lcs}
Let  $(X,P)$\ and $(Y,Q)$ be  locally convex
spaces,  where $P,Q$ are directed families of seminorms generating their topologies,  and $A\subseteq X$. A function $f:A\rightarrow Y$\ is said to
satisfy the \textit{Lipschitz condition} (or that $f$ is a \emph{Lipschitz function})  if for each   $q\in Q$\   there exist   $p\in P$\ and $L=L_q\ge0$ such that
\begin{equation*}
q(f(x)-f(y))\leq L p(x-y)\text{,}
\end{equation*}%
for all $x,y\in A$.

The function $f$ is called \textit{locally Lipschitz} on $A$ if every point $x\in A$ has a neighborhood $V$ such that $f$ is Lipschitz on $V\cap A$
\end{defi}

\begin{remark} It is easy to check that the definition does not depend on the (directed) families of seminorms $P,Q$ generating the locally convex topologies on $X$ and $Y$, respectively.

\end{remark}

\begin{remark}\label{re.Lip-lcs-norm}  If $X$ and $Y$ are Banach spaces then the above definition
coincides with the standard definition (with respect to the metrics generated by the norms).

If $Y=\Kapa$, then $f:A\rightarrow \mathbb{R}$\ is
\textit{Lipschitz} if there exist $p\in  P$\ and $L>0$ such that
\begin{equation*}
\left\vert f(x)-f(y)\right\vert \leq Lp(x-y)\text{,}
\end{equation*}
for all $x,y\in A$.
\end{remark}

 The next theorem shows that continuous convex vector-functions defined on open convex subsets of locally convex spaces are locally Lipschitz.
For a seminorm $p$ on a vector space $X$ we use the notations
$$
B_p=\{x\in X : p(x)\le 1\}\quad\mbox{and}\quad B'_p=\{x\in X : p(x)< 1\}\,.$$

Arbitrary balls satisfy  the equalities
\begin{align*}
  B_p[x_0,r]:=&\,\{x\in X : p(x-x_0)\le r\}=x_0+r B_p\,,\;\;\mbx{and}\\
   B_p(x_0,r):=&\,\{x\in X : p(x-x_0)< r\}=x_0+r B'_p\,,
\end{align*}
for $x_0\in X$ and $r>0$

\begin{theo}\label{cs-t.cv-Lip-lcs}
 Let $\,(X,P),\;(Y,Q)\,$ be locally convex spaces, $C$ a normal
 cone in $Y$ and $\Omega$ an open convex subset of $X$.

 If $\,f:\Omega \to Y\,$ is a continuous convex mapping then $f$ is
 locally Lipschitz on $\Omega$.

     Furthermore, $f$ is Lipschitz on every compact subset of $\Omega.$
\end{theo}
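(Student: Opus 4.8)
The plan is to transcribe the scalar argument of Proposition \ref{p1.cv-Lip} into the vector setting, with two systematic substitutions dictated by the methodological point of the paper: the ordinary slope is replaced by the $p$-slope of Proposition \ref{cs-le.cv-Lip-lcs}, and the final passage from a two-sided order inequality $u\le f(y)-f(x)\le v$ to a seminorm estimate is carried out using a $\gamma$-normal seminorm, which exists because $C$ is normal (Theorem \ref{t2.char-normal-cone}). By the remarks following Definition \ref{def1.Lip-lcs}, the (locally) Lipschitz property does not depend on the directed generating families chosen on $X$ and $Y$; hence we may fix a constant $\gamma>0$ as in Theorem \ref{t2.char-normal-cone} and prove both assertions working with a generating family of $\gamma$-normal seminorms on $Y$, that is, it suffices to establish the required estimate for a single $\gamma$-normal seminorm $q$ on $Y$.

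The heart of the proof is the following computation. Suppose we are given $p\in P$, $\rho>0$ and $M>0$ such that the points that will occur remain in $\Omega$ and $q(f(\cdot))\le M$ on a suitable $p$-ball (respectively $p$-tube around a compact set); let $x,y$ be two admissible points with $s:=p(x-y)>0$. Put $z_t=x+t(y-x)$ and choose the extensions $a:=z_{-\rho/s}$ and $b:=z_{1+\rho/s}$, so that $p(x-a)=p(y-b)=\rho$, all of $a,x,y,b$ lie where $q(f(\cdot))\le M$, and the segment $[a,b]$ stays in $\Omega$ by convexity of $\Omega$. Applying inequality \eqref{eq2.slope} of Proposition \ref{cs-le.cv-Lip-lcs} with base point $x$ to the parameters $-\rho/s<0<1$, and then with base point $y$ to the parameters $0<1<1+\rho/s$, we obtain
\[
\frac{f(x)-f(a)}{\rho}\;\le\;\frac{f(y)-f(x)}{s}\;\le\;\frac{f(b)-f(y)}{\rho}\,,
\]
that is, $u\le f(y)-f(x)\le v$ with $u=\tfrac{s}{\rho}\bigl(f(x)-f(a)\bigr)$ and $v=\tfrac{s}{\rho}\bigl(f(b)-f(y)\bigr)$. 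Since $q(u),q(v)\le\tfrac{2M}{\rho}\,s=\tfrac{2M}{\rho}\,p(x-y)$, the $\gamma$-normality of $q$ yields $q(f(x)-f(y))\le\gamma\max\{q(u),q(v)\}\le\tfrac{2\gamma M}{\rho}\,p(x-y)$. The degenerate case $p(x-y)=0$ is disposed of directly: then the whole line $\{z_t:t\in\Real\}$ stays in the region where $q(f(\cdot))\le M$, the map $g(t):=f(z_t)$ is a convex mapping of $\Real$ into $Y$ with $q(g(\cdot))\le M$, and squeezing $g(1)-g(0)$ between $-\tfrac1n(g(-n)-g(0))$ and $\tfrac1n(g(n)-g(0))$ for every $n$ and again invoking $\gamma$-normality forces $q(f(x)-f(y))=q(g(1)-g(0))=0$.

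It remains to produce the data $p,\rho,M$. For local Lipschitzness at a point $x_0\in\Omega$: using the continuity of $f$ at $x_0$ (with the seminorm $q$ and $\varepsilon=1$) together with the directedness of $P$, one finds $p\in P$ and $r>0$ with $B_p[x_0,2r]\subset\Omega$ and $q(f(z))\le M:=q(f(x_0))+1$ for all $z\in B_p[x_0,2r]$; running the above with $\rho=r$ on $x,y\in B_p[x_0,r]$ (note that then $a,b\in B_p[x_0,2r]$) shows that $f$ is Lipschitz on the neighborhood $B_p[x_0,r]$ of $x_0$ with constant $2\gamma M/r$. For Lipschitzness on a compact set $K\subset\Omega$: openness of $\Omega$, the local $q$-boundedness of the continuous map $f$, directedness of $P$ and a finite subcover of $K$ yield $p\in P$, $\delta>0$, $M>0$ with $K+B_p[0,\delta]\subset\Omega$ and $q(f(\cdot))\le M$ on $K+B_p[0,\delta]$; running the computation with $\rho=\delta$ on $x,y\in K$ (so that $a,b\in K+B_p[0,\delta]$) gives $q(f(x)-f(y))\le\tfrac{2\gamma M}{\delta}\,p(x-y)$ for all $x,y\in K$.

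The only genuinely infinite-dimensional, order-theoretic step is the last one in the central computation: converting the order inequality $u\le f(y)-f(x)\le v$ into a seminorm bound. This is precisely where the normality of $C$ — realized, via Theorem \ref{t2.char-normal-cone}, as a generating family of $\gamma$-normal seminorms on $Y$ — is indispensable; $p$-slope monotonicity by itself controls $f(y)-f(x)$ only in the order, not in the seminorm $q$. The rest is bookkeeping: selecting one seminorm $p\in P$ that simultaneously controls the neighborhood (or the tube around $K$) and along which $f$ is $q$-bounded, which costs the directedness of $P$ plus, in the compact case, a routine finite-subcover argument; and the harmless degenerate direction $p(x-y)=0$.
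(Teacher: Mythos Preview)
Your proof is correct and follows essentially the same route as the paper: slope monotonicity (Proposition \ref{cs-le.cv-Lip-lcs}) yields a two-sided order inequality, and normality of $C$ --- realized by you via $\gamma$-normal seminorms, in the paper via Minkowski functionals of $C$-full absolutely convex sets (Lemma \ref{le.Mink-fcs} and Proposition \ref{p2.cv-Lip-lcs}) --- converts it to a seminorm bound. The only organizational difference is in the compact case: the paper first establishes the local Lipschitz property and then, for $x\in U_i,\,y\in U_j$ in a finite subcover, extends the segment slightly inside $U_i$ and $U_j$ and invokes the local Lipschitz constants, whereas you produce a single tube $K+B_p[0,\delta]\subset\Omega$ on which $q\circ f$ is uniformly bounded and run the core estimate once; both variants work.
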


We start with the following proposition,  the key tool in the proof of the theorem.

\begin{prop}\label{p2.cv-Lip-lcs}
 Let $X$ be a vector space, $x_0\in X,\, p$ a seminorm on $X,\,$  $Y$ a vector space ordered by a cone $C$ and let $q$ be the Minkowski functional of an absolutely convex $C$-full absorbing subset $W$ of $Y$.

 For  $R>0$ let $V=B_p[x_0,R]$
and let  $f:V\to Y$ be a $C$-convex function.

     If, for some $\beta>0,\, $   $q(f(x))\le \beta p(x)$ for all $x\in V$, then for every $0<r<R,$
 \bequ\label{eq2.le2.cv-Lip-lcs}
 q(f(x)-f(y))\le\frac{2\beta}{R-r}\,p(x-y)\,,
 \eequ
 for all $x,y\in B_p[x_0,r]$.
 \end{prop}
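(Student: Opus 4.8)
The plan is to mimic the classical one‑dimensional argument from Proposition \ref{p1.cv-Lip}, but with the Euclidean slope replaced by the $p$-slope introduced in Proposition \ref{cs-le.cv-Lip-lcs}. Fix $0<r<R$ and two points $x,y\in B_p[x_0,r]$. If $p(x-y)=0$ there is nothing to prove (one still has to check $q(f(x)-f(y))=0$; this follows by sandwiching $f$ along the line through $x,y$, but the generic case is the point). So assume $p(x-y)>0$ and consider the line $D(x,y)$. The idea is to extend the segment $[x,y]$ a little beyond each endpoint, staying inside $V=B_p[x_0,R]$: choose $u,v\in D(x,y)$ with the order of points $u,\,x,\,y,\,v$ on the line, such that $p(u-x_0)\le R$ and $p(v-x_0)\le R$, and such that $p(x-u)\ge R-r$ and $p(y-v)\ge R-r$. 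Such points exist because $x,y$ lie in the ball of radius $r$ and the line, parametrized by $t\mapsto x+t(y-x)$, has constant $p$-speed $p(y-x)$ by part 1 of Proposition \ref{cs-le.cv-Lip-lcs}: moving a $p$-distance $R-r$ from $x$ away from $y$ lands at $p$-distance at most $r+(R-r)=R$ from $x_0$ by the triangle inequality.

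The core step uses the monotonicity of the $p$-slope (part 3 of Proposition \ref{cs-le.cv-Lip-lcs}) applied to the $C$-convex function $f$ restricted to $D(x,y)\cap V$. With $u,x,y,v$ in this order on the line, monotonicity of $\Delta_{p,x}(f)$ at the base point $x$ gives
\[
-\Delta_{p,x}(f)(u)\;\le\;\Delta_{p,x}(f)(y)\;\le\;\Delta_{p,y}(f)(v)\quad\text{(after re-basing at }y\text{)},
\]
more carefully: from $u<x<y$ one gets $\frac{f(x)-f(u)}{p(x-u)}\le \frac{f(y)-f(x)}{p(y-x)}$, and from $x<y<v$ one gets $\frac{f(y)-f(x)}{p(y-x)}\le \frac{f(v)-f(y)}{p(v-y)}$. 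Hence the vector $g:=f(y)-f(x)$ satisfies, in the order $\le_C$,
\[
\frac{f(x)-f(u)}{p(x-u)}\;p(y-x)\;\le\;g\;\le\;\frac{f(v)-f(y)}{p(v-y)}\;p(y-x).
\]
Now apply $q$. Since $q$ is the Minkowski functional of the absolutely convex $C$-full set $W$, it is $1$-normal (Theorem \ref{t2.char-normal-cone}), so from $a\le_C g\le_C b$ we deduce $q(g)\le\max\{q(a),q(b)\}$. Thus
\[
q(f(x)-f(y))\;\le\;\max\!\left\{\frac{q(f(x)-f(u))}{p(x-u)},\;\frac{q(f(v)-f(y))}{p(v-y)}\right\}\,p(x-y).
\]

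It remains to bound the two scalar coefficients by $\dfrac{2\beta}{R-r}$. By the hypothesis $q(f(z))\le\beta p(z)$ for $z\in V$ together with subadditivity and symmetry of $q$ (note $q$ is a seminorm since $W$ is absolutely convex), $q(f(x)-f(u))\le q(f(x))+q(f(u))\le\beta(p(x)+p(u))$. This is not quite the shape we want; instead one should translate so that $x_0$ is the origin — replacing $f$ by $z\mapsto f(z)$ we may as well assume the hypothesis reads $q(f(z))\le\beta p(z-x_0)$, which is what the statement of the theorem that invokes this proposition will supply, or argue directly that $q(f(x)-f(u))\le \beta\big(p(x-x_0)+p(u-x_0)\big)\le 2\beta R$ is too weak; the correct estimate comes from noticing $u,x$ and $v,y$ both lie in $V$ and using $q(f(x)-f(u))\le q(f(x))+q(f(u))\le 2\beta R$ against $p(x-u)\ge R-r$, which gives $\frac{q(f(x)-f(u))}{p(x-u)}\le\frac{2\beta R}{R-r}$; to get the sharper $\frac{2\beta}{R-r}$ one renormalizes $p$ so that $R$ is absorbed, i.e. the hypothesis should be read with $p$ already scaled, or $\beta$ already carries the factor $R^{-1}$. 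The honest route, and the one I expect the author takes, is: since $q(f(z))\le\beta p(z)$, we have $q(f(x)-f(u))\le\beta\,p(x)+\beta\,p(u)$, but because the inequality $q(f(z))\le\beta p(z)$ already has $R$ built into $\beta$ through the normalization $p(z)\le 1$ on $V$, the two numerators are each $\le 2\beta$, and dividing by $p(x-u)\ge R-r$ yields the claim. In any case, the one genuinely delicate point is the choice of the extension points $u,v$ and verifying simultaneously the two inequalities $p(u-x_0)\le R$, $p(x-u)\ge R-r$ (and symmetrically for $v$); everything else is bookkeeping with the $1$-normality of $q$ and the slope monotonicity already proved. I would single out that geometric step as the main obstacle, and handle it by the explicit parametrization: with $x=x_0+a$, $y=x_0+b$, $p(a),p(b)\le r$, take $u=x-\frac{R-r}{p(y-x)}(y-x)$ and $v=y+\frac{R-r}{p(y-x)}(y-x)$, then $p(x-u)=R-r$ by part 1, and $p(u-x_0)\le p(x-x_0)+p(x-u)\le r+(R-r)=R$, so $u\in V$, as needed.
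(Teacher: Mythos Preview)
Your overall strategy is exactly the paper's: restrict $f$ to the line through $x,y$, extend the segment slightly on each side to points $u,v\in V$, use the vector slope monotonicity (Proposition~\ref{cs-le.cv-Lip-lcs}.3) to sandwich $(f(y)-f(x))/p(y-x)$, and then invoke the $1$-normality of $q$ (Lemma~\ref{le.Mink-fcs}). Your explicit choice $u=x-\dfrac{R-r}{p(y-x)}(y-x)$, $v=y+\dfrac{R-r}{p(y-x)}(y-x)$ is in fact slightly tidier than the paper's: the paper introduces $\psi(t)=p(x-x_0+t(y-x))$ and uses the intermediate value theorem to pick $a<0<1<b$ with $\psi(a)=\psi(b)=R$, so that $u,v$ lie exactly on the $p$-sphere of radius $R$; both constructions yield $p(x-u)\ge R-r$, $p(v-y)\ge R-r$ and $u,v\in V$, which is all that is needed.

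Your genuine difficulty at the end is not a gap in your reasoning but a typo in the stated hypothesis. The assumption should read $q(f(x))\le\beta$ for all $x\in V$ (a uniform bound), not $q(f(x))\le\beta\,p(x)$. This is how the proposition is actually applied in the proofs of Theorems~\ref{cs-t.cv-Lip-lcs} and~\ref{t.3.1} (with $\beta=1$ coming from $q(f(x))\le 1$ on $V$), and it is what the paper's own proof uses: in Case~I it bounds $q(f(z_t)-f(y))\le 2\beta$, and in Case~II it bounds $q(f(x)-f(u))\le q(f(x))+q(f(u))\le 2\beta$, then divides by $p(x-u)\ge R-r$. With the corrected hypothesis your ``honest route'' is exactly right and there is no need for any renormalization contortions.

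One place where you do hand-wave and the paper does not: the degenerate case $p(x-y)=0$ is not automatic. Here the entire line $D(x,y)$ lies in $B_p[x_0,r]$, and the paper argues directly from convexity: for $t>1$ write $x=(1-t^{-1})y+t^{-1}z_t$ with $z_t=y+t(x-y)$ (and symmetrically $z_t'$), obtain the order inequalities $t^{-1}(f(x)-f(z_t'))\le f(x)-f(y)\le t^{-1}(f(z_t)-f(y))$, apply Lemma~\ref{le.Mink-fcs} to get $q(f(x)-f(y))\le 2\beta/t$, and let $t\to\infty$. You should include this; ``sandwiching $f$ along the line'' is the right idea but needs those two lines of computation.
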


 We need  the following simple remark.
 \begin{lemma}[\cite{Breck}, Prop. 2.5.6]\label{le.Mink-fcs} Let $Y$ be a vector space ordered by a cone $C$. If $W$ is a $C$-full absolutely convex absorbing subset of $Y$ then the Minkowski functional $q$ of $W$ is  a seminorm, satisfying the condition
 \bequ\label{eq.Mink-fcs}
 q(y)\le\max\{q(x),q(z)\}\,,
 \eequ
 for all $x,y,z\in Y$ with $x\le y\le z$.
 \end{lemma}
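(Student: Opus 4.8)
The plan is to treat the two assertions in the statement separately: first that the Minkowski functional $q$ of $W$ is a seminorm, and then the order inequality \eqref{eq.Mink-fcs}, which is the only place where the $C$-fullness of $W$ is used. For the seminorm part, recall that
$q(y)=\inf\{t>0 : y\in tW\}$; since $W$ is absorbing this infimum is taken over a nonempty set, so $q$ is finite-valued; since $W$ is balanced a routine verification gives $q(\lambda y)=|\lambda|\,q(y)$ for every scalar $\lambda$ (here it is essential to use balancedness, not mere convexity); and since $W$ is convex, $q$ is subadditive. This is the classical description of the Minkowski functional of an absolutely convex absorbing set, and I would only indicate it, referring to \cite{Schaef-TVS}.

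Next I would record two elementary auxiliary observations. First, if $0\in W$ and $W$ is convex, then $sW\subset tW$ whenever $0<s\le t$; indeed, for $w\in W$ one has $sw=t\bigl(\frac st\,w+(1-\frac st)\,0\bigr)\in tW$ by convexity of $W$ (note $0\in W$ since $W$ is balanced and nonempty). Second, if $W$ is $C$-full and $\lambda>0$, then $\lambda W$ is $C$-full: if $u,v\in W$ and $\lambda u\le w\le\lambda v$, then, multiplying by $\lambda^{-1}>0$ and using (OVS2), $u\le\lambda^{-1}w\le v$, so $\lambda^{-1}w\in W$ by the $C$-fullness of $W$, i.e.\ $w\in\lambda W$.

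With these in hand the inequality \eqref{eq.Mink-fcs} is immediate. Let $x,y,z\in Y$ with $x\le y\le z$, put $M=\max\{q(x),q(z)\}$, and fix $\varepsilon>0$. By definition of the Minkowski functional, $x\in(q(x)+\varepsilon)W$ and $z\in(q(z)+\varepsilon)W$; since $q(x)+\varepsilon\le M+\varepsilon$ and $q(z)+\varepsilon\le M+\varepsilon$, the first observation yields $x,z\in(M+\varepsilon)W$. By the second observation $(M+\varepsilon)W$ is $C$-full, and since $x\le y\le z$ we conclude $y\in(M+\varepsilon)W$, whence $q(y)\le M+\varepsilon$. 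Letting $\varepsilon\searrow 0$ gives $q(y)\le M=\max\{q(x),q(z)\}$.

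There is no genuinely difficult step here; the only points deserving care are the two auxiliary observations — in particular the stability of $C$-fullness under positive dilations, which is precisely what lets one pass from the literal content of $C$-fullness (``$u,v\in W$ and $u\le w\le v$ imply $w\in W$'') to the statement applied to the scaled set $(M+\varepsilon)W$ — together with the remark that the absolute homogeneity of $q$ uses the balancedness, and not just the convexity, of $W$.
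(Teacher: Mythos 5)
Your proof is correct and follows essentially the same route as the paper: fix $\varepsilon>0$, place $x$ and $z$ in $(M+\varepsilon)W$, invoke $C$-fullness to conclude $y\in(M+\varepsilon)W$, and let $\varepsilon\searrow 0$. The only cosmetic difference is that the paper rescales the points down into $W$ itself (using balancedness for the inclusion $bW\subset(a+\varepsilon)W$), whereas you rescale $W$ up and check that $C$-fullness and the nesting of dilates are preserved; these are the same argument.
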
\begin{proof}
   Let $a:=\max\{q(x),q(z)\}$. Then, for every $\epsic>0$,  $q(x),q(z)<a+\epsic$, so, by the definition of the Minkowski functional, there exist $b,c\in (0,a+\epsic)$ such that $x\in bW\,$ and $ z\in cW$. Since $W$ is balanced,
   $$
   bW=(a+\epsic)\,\frac{b}{a+\epsic}W\subset (a+\epsic)W\,,$$
   and
   $$
   cW=(a+\epsic)\,\frac{c}{a+\epsic}W\subset (a+\epsic)W\,,$$
   implying $(a+\epsic)^{-1}x,\, (a+\epsic)^{-1}z\in W$. Since $W$ is $C$-full and
   $(a+\epsic)^{-1}x\le (a+\epsic)^{-1}y\le (a+\epsic)^{-1}z$ it follows $(a+\epsic)^{-1}y\in W$ or, equivalently, $y\in (a+\epsic)W$. But then $q(y)\le a+\epsic$. Since $\epsic>0$ was arbitrarily chosen, this implies
   $$
   q(y)\le a=\max\{q(x),q(z)\}\,.$$
   \end{proof}
 \begin{proof}[Proof of Proposition \ref{p2.cv-Lip-lcs}]

Let $x,y\in B_p[x_0,r],\, x\ne y$.

\emph{Case} I.\;  $p(x-y)=0$.

In this case  the line $D(x,y):=x+\Real(y-x)$ is contained in  $B_p[x_0,r]$.

Indeed,  for $z_t=x+t(y-x),$
$$
p(z_t-x_0)\le p(x-x_0)+|t|p(y-x)\le r,$$
for all $t\in\Real,$ proving that $D(x,y)\subset    B_p[x_0,r].$

 For $t>1$ let $z_t=y+t(x-y)$ and $z'_t=x+t(y-x)$. Then
$\,x=(1-t^{-1})y+t^{-1}z_t\,$ and
$\, y =(1-t^{-1})x+t^{-1}z_t',\,$
so that, by the convexity of $\,f,$
$$
f(x)\leq (1-t^{-1})f(y)+t^{-1}f(z_t)
$$
implying
\begin{equation}\label{e20}
 f(x)-f(y)\leq t^{-1}(f(z_t)-f(y)).
\end{equation}

Interchanging the roles of $x$ and $y$ one obtains
\begin{equation}\label{e21}
f(y)-f(x)\leq t^{-1} (f(z_t')-f(x))\iff f(x)-f(y)\geq t^{-1} (f(x)-f(z_t'))
\end{equation}

But then, by Lemma \ref{le.Mink-fcs},
$$
q(f(x)-f(y))\le \max\{t^{-1}q(f(z_t)-f(y)),t^{-1} q(f(x)-f(z_t'))\} \le\frac{2\beta}{t}.
$$

Letting $t\to \infty$, one obtains $q(f(x)-f(y))=0$.

\emph{Case} II.\; $p(x-y)>0$.

The function $\,\psi:\mathbb R\to \mathbb R\,$ defined by
$\, \psi(t) =p(x-x_0+ t(y-x)),\, t\in \mathbb R,\,$
is continuous and $\,\psi(0) =p(x-x_0)\leq r < R,\;
\psi(1) =p(y-x_0)\leq r < R.$

The inequality
$$
\psi(t)\geq |t| p(y-x)-p(x-x_0)
$$
shows that $\,\lim_{|t|\to \infty} \psi(t) =\infty,\,$ so that there are
$a<0\,$ and $b>1\,$ such that
$$
\psi(a)=R\quad \mbox{and} \quad \psi(b) = R.
$$

Putting $\,u:= x+ a(y-x)\,$ and $\, v:=x+b(y-x)\,$, it follows
$$
u-x=x-x_0+a(y-x)-(x-x_0)\quad\mbx{and}\quad v-y=x-x_0+b(y-x)-(y-x_0)\,,$$
so that
\bequ\label{eq.ineqs-psi}
p(u-x)\geq \psi(a) - p(x-x_0) \geq R-r\quad\mbx{and}\quad
 p(v-y)\geq \psi(b)-p(y-x_0) \geq R-r.
\eequ

Appealing to \eqref{eq2.slope}, it follows
\bequ\label{eq1.Lip-slope}
\frac{f(x)-f(u)}{p(x-u)}\le \frac{f(y)-f(x)}{p(y-x)}\le \frac{f(v)-f(y)}{p(v-y)}
\,.\eequ

  By hypothesis and the inequalities \eqref{eq.ineqs-psi},  $q((f(x)-f(u))/p(x-u))\le2\beta (R-r)^{-1}$ and $q((f(v)-f(y))/p(v-y))\le2\beta (R-r)^{-1}$, so that, by Lemma \ref{le.Mink-fcs},
$$
q\left(\frac{f(y)-f(x)}{p(y-x)}\right)\le \frac{2\beta}{R-r}\iff q(f(y)-f(x))\le \frac{2\beta}{R-r}\,p(y-x)\,.$$
\end{proof}

\begin{remark} If $Y=\Real$ the case $p(x-y) =0$ can be treated appealing to Proposition \ref{p.cv-aff}. Indeed, as we have seen, in this case $D(x,y)\subset B_p[x_0,r]$, so we can consider the convex function $\vphi:\Real\to\Real,\, \vphi(t)=f(x+t(y-x)),\, t\in\Real.$  By hypothesis the function $\vphi$ is bounded, so that
  by Proposition \ref{p.cv-aff}.2 it is constant. But then $f(x)=\vphi(0)=\vphi(1)=f(y).$
\end{remark}

\begin{proof}[Proof of Theorem \ref{cs-t.cv-Lip-lcs}] Suppose that $P$ is directed and that the seminorms in $Q$
are the Minkowski functionals of the members of a neighborhood base of $\,0\in Y\,$
formed of absolutely convex $C$-full sets (\cite[V.3.1]{Schaef-TVS}).

Let $\, x_0\in \Omega \,$ and $\,q\in Q.$ The continuity of $f$ at $x_0$ implies
the existence of a seminorm $\,p\in P\,$ and of $\,R>0\,$ such that
$\, V:=x_0+RB_p \subset \Omega\,$ and
$$
q(f(x))\leq 1 \qquad \forall x\in V.
$$

If $\,0<r<R\,$ then, by Proposition \ref{p2.cv-Lip-lcs},
$$
q(f(x)-f(y))\leq \frac{2}{R-r}\, p(x-y)
$$
for all $\, x,y \in x_0+rB_p.$

Let us show now that $f$ is Lipschitz on every compact subset $K$ of
$\,\Omega.\,$  Let $\,q\in Q\,$ be the Minkowski functional of a
$C$-full absolutely convex neighborhood of $\,0\in Y.$
By the first part of the proof,  for every $\,x\in K\,$ there are
$\,p_x \in P,\, L_x >0\,$ and $\,r_x>0\,$ such that
$\,U_x := x +r_x B'_{p_x} \subset \Omega\,$ and
$$
q(f(u)-f(v))\leq L_x p_x(u-v) \quad \forall u,v \in U_x.
$$
The compactness of $K$ implies the existence of a finite set
$ \{x_1,...,x_n\}\subset K\,$ such that
$$
K \subset \bigcup_{i=1}^n U_i,
$$
where $\,U_i =U_{x_i}$. Put $\,p_i=p_{x_i},\;r_i=r_{x_i},\;L_i= L_{x_i},\,$
and let $\,p\in P,\; p\geq p_{i},\; i=1,...,n\,$
and $\,L= \max \{L_{1},...,L_{n}\}$.  We show that
\begin{equation}\label{e26}
q(f(x)-f(y))\leq L p(x-y)
\end{equation}
for all $\,x,y \in K.$

Let $\, x,y\,$ be  distinct points in $K$. Suppose first that
$\,p(x-y)>0.$ If $\,i,j \in \{1,...,n\}\,$ are such that
$\,x\in U_i\,$ and $\,y\in U_j\,$ then, since these sets are open, there
exist
$\,a<0\,$ and $\,b>1\,$ such that
$$
u:=x+a(y-x) \in U_i \quad \mbox{and} \quad
v:=x+b(y-x) \in U_j.
$$

Now, by \eqref{eq2.slope},
$$
\frac{f(x)-f(u)}{p(x-u)}\le\frac{f(y)-f(x)}{p(y-x)}\le \frac{f(v)-f(y)}{p(v-y)}\,,$$
so that, by Lemma \ref{le.Mink-fcs},
$$
\frac{q(f(y)-f(x))}{p(y-x)}\le\max\left\{\frac{q(f(x)-f(u))}{p(x-u)},\frac{q(f(v)-f(y))}{p(v-y)}\right\}\le L\,.$$

If $\,p(x-y)=0\,$ then
$$
p(y-x_i)\leq p(y-x)+p(x-x_i)< r_i
$$
implying $\, x,y \in U_i\,$ and
$$
q(f(x)-f(y))\leq L_{i} p_{i} (x-y)\leq L p(x-y).
$$
\end{proof}

Taking into account  Proposition  \ref{cs-p.cv-cont-Rn} and  Theorem \ref{cs-t.cv-Lip-lcs}, one obtains the following consequence.

\begin{corol}\label{cs-c.cv-Lip-Rn} Let  $f:\W\subseteq \mathbb{R}^{n}\rightarrow
\mathbb{R}$  be a convex function, where the set $\W$  is
open and convex.
 Then $f$  is locally Lipschitz on $\W$ and Lipschitz on every compact subset of $\W$.
\end{corol}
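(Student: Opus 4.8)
The plan is to obtain the statement as a direct combination of two facts already established in the excerpt, with essentially no new work. First, by Proposition \ref{cs-p.cv-cont-Rn}, a convex function $f:\W\subseteq\Real^n\to\Real$ with $\W$ open and convex is locally bounded from above on $\W$, and consequently continuous on $\W$. Thus $f$ falls under the hypotheses of Theorem \ref{cs-t.cv-Lip-lcs}, applied with $X=Y=\Real^n$ (or $Y=\Real$), the cone $C=[0,\infty)$, which is normal in $\Real$ with constant $\gamma=1$, and $\Omega=\W$. The first conclusion of that theorem gives at once that $f$ is locally Lipschitz on $\W$, and the ``furthermore'' clause gives that $f$ is Lipschitz on every compact subset of $\W$. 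So the argument is simply: \emph{continuity comes from Proposition \ref{cs-p.cv-cont-Rn}, and Lipschitz behaviour then comes from Theorem \ref{cs-t.cv-Lip-lcs}}.

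There is essentially no obstacle here; the only point worth a sentence is to note that the locally convex space formalism of Theorem \ref{cs-t.cv-Lip-lcs} specializes correctly to the normed (in fact Euclidean) setting. Concretely, on $\Real^n$ one may take $P=Q=\{\norm\}$ to be the singleton family consisting of the Euclidean norm, which is a directed family of seminorms generating the topology; the positive cone $[0,\infty)\subset\Real$ is closed and $1$-normal (if $x\le z\le y$ in $\Real$ then $|z|\le\max\{|x|,|y|\}$); and the notion of Lipschitz function from Definition \ref{def1.Lip-lcs} reduces, by Remark \ref{re.Lip-lcs-norm}, to the classical one. Hence every hypothesis of Theorem \ref{cs-t.cv-Lip-lcs} is met.

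I would therefore write the proof as: By Proposition \ref{cs-p.cv-cont-Rn}, $f$ is continuous on $\W$. Since $\Real^n$ (resp.\ $\Real$) is a locally convex space and the cone $[0,\infty)$ is normal in $\Real$, Theorem \ref{cs-t.cv-Lip-lcs} applies and yields that $f$ is locally Lipschitz on $\W$ and Lipschitz on every compact subset of $\W$. If one wishes to avoid invoking the vector-valued machinery, an alternative self-contained route is to mimic Proposition \ref{p1.cv-Lip} locally: around each $x_0\in\W$ pick a closed Euclidean ball $B[x_0,R]\subset\W$ on which $f\le M$ (possible by local boundedness from above), deduce a two-sided bound $|f|\le M'$ on that ball via the midpoint inequality $f(x_0)\le\frac12 f(x_0+h)+\frac12 f(x_0-h)$, and then apply the slope-monotonicity argument of Proposition \ref{cs-le.cv-Lip-lcs}.3 exactly as in the scalar proof of Proposition \ref{p1.cv-Lip} to get the Lipschitz estimate on $B[x_0,r]$ for $r<R$; the compact case then follows by the standard finite-cover argument used at the end of the proof of Theorem \ref{cs-t.cv-Lip-lcs}. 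But the short route via Proposition \ref{cs-p.cv-cont-Rn} and Theorem \ref{cs-t.cv-Lip-lcs} is the cleanest, and is the one I would present.
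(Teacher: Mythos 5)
Your proposal is correct and coincides with the paper's own argument: the corollary is stated there as a direct consequence of Proposition \ref{cs-p.cv-cont-Rn} (continuity of finite-dimensional convex functions) combined with Theorem \ref{cs-t.cv-Lip-lcs}. The additional remarks on specializing the locally convex formalism to $\Real^n$ and the alternative elementary route are fine but not needed.
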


\subsection{The order-Lipschitz property}

Papageorgiou (\cite{papag83a,papag83b}) considered a notion of Lipschitzness for convex vector functions related to the order. Let $X$ be a normed space and $Y$ a normed lattice, $\W\subset X$ and $f:\W\to Y$. One says that $f$ is $o$-Lipschitz on a subset $Z$ of $\W$ if there exists $y\ge 0$ in $Y$ such that
\bequ\label{def.o-Lip}
|f(z)-f(z')|\le y \|z-z'\|\,,\eequ
for all $z,z'\in Z$.

Notice that  an $o$-Lipschitz function is Lipschitz. Indeed, from \eqref{def.o-Lip},
\bequs
\|f(z)-f(z')\|\le \|b\| \|z-z'\|\,,\eequs
for all $z,z'\in Z$, because in a normed  lattice $|x|\le |x'|$ implies $\|x\|\le\|x'\|$.

\begin{theo}\label{t.o-Lip} Let $X$ be a normed space, $Y$ a normed lattice, $\W \subset X$ open and convex and $f:\W\to Y$ a function convex with respect to  the order of $Y$.
If $f$ is upper $o$-bounded on a neighborhood of a point $x_0\in\W$, then $f$ is locally $o$-Lipschitz on $\W$.
  \end{theo}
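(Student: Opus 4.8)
The plan is to reduce the statement to the already-established Proposition~\ref{p2.cv-Lip-lcs}, applied with $X$ normed, $p=\norm$, and $Y$ the normed lattice, using the absolute value $|\cdot|$ as the substitute for the Minkowski-functional inequality of Lemma~\ref{le.Mink-fcs}. First I would fix $x_0\in\W$ together with a ball $B(x_0,R_0)\subset\W$ and $w_0\ge 0$ in $Y$ with $f(x)\le w_0$ for all $x\in B(x_0,R_0)$; by Proposition~\ref{p1.o-bd-cv}.1 (transported to the normed setting), $f$ is then locally $o$-bounded on all of $\W$, so it suffices to produce, around an arbitrary $x_1\in\W$, a ball on which an $o$-Lipschitz estimate \eqref{def.o-Lip} holds. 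So I may as well assume from the start that $f$ is $o$-bounded on a ball $V=B(x_1,R)\subset\W$, say $a\le f(x)\le b$ for all $x\in V$; by translating values (replacing $f$ by $f-\tfrac12(a+b)$, which changes nothing about the $o$-Lipschitz property) I can assume $-w\le f(x)\le w$ on $V$ for some $w\ge 0$, i.e. $|f(x)|\le w$ on $V$.

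The core is then a verbatim repetition of the proof of Proposition~\ref{p2.cv-Lip-lcs} with the following dictionary: where that proof invokes Lemma~\ref{le.Mink-fcs} to pass from $\xi\le\eta\le\zeta$ to $q(\eta)\le\max\{q(\xi),q(\zeta)\}$, here I invoke property (v) of \eqref{eq1.v-latt}, namely $\xi\le\eta\le\zeta\Rightarrow |\eta|\le|\xi|\vee|\zeta|$, which the excerpt proves just before Theorem~\ref{t1.vect-latt}. Concretely, fix $0<r<R$ and $x,y\in B(x_1,r)$ with $x\ne y$. In the generic case $\norm{x-y}>0$ I choose, exactly as in Proposition~\ref{p2.cv-Lip-lcs}, points $u=x+a(y-x)$ and $v=x+b(y-x)$ with $a<0<1<b$ lying on the sphere $\partial V$, so that $\norm{x-u}\ge R-r$ and $\norm{v-y}\ge R-r$. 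Proposition~\ref{cs-le.cv-Lip-lcs}.3, formula \eqref{eq1.Lip-slope}, gives
\[
\frac{f(x)-f(u)}{\norm{x-u}}\le \frac{f(y)-f(x)}{\norm{y-x}}\le \frac{f(v)-f(y)}{\norm{v-y}}\,.
\]
Applying (v) of \eqref{eq1.v-latt} to this chain and using $|f(\cdot)|\le w$ on $V$ gives
\[
\left|\frac{f(y)-f(x)}{\norm{y-x}}\right|\le \left|\frac{f(x)-f(u)}{\norm{x-u}}\right|\vee\left|\frac{f(v)-f(y)}{\norm{v-y}}\right|\le \frac{2w}{R-r}\,,
\]
that is $|f(x)-f(y)|\le \dfrac{2w}{R-r}\,\norm{x-y}$, which is precisely \eqref{def.o-Lip} with Lipschitz "constant" $\tilde w:=\dfrac{2w}{R-r}\ge 0$ on the ball $B(x_1,r)$.

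There remains the degenerate case $\norm{x-y}=0$, which cannot be discarded when $X$ is merely normed only if $\norm{}$ could vanish off the diagonal — but a norm does not, so in a genuine normed space $\norm{x-y}=0$ forces $x=y$ and there is nothing to prove; if one wants to cover seminormed $X$ the argument of Case~I of Proposition~\ref{p2.cv-Lip-lcs} applies mutatis mutandis, letting $t\to\infty$ and using (v) to conclude $|f(x)-f(y)|\le 2w/t\to 0$, hence $|f(x)-f(y)|=0$. The main obstacle, such as it is, is purely bookkeeping: one must make sure the passage from the single $o$-bounded ball around $x_0$ to an $o$-bounded ball around an arbitrary point of $\W$ is legitimate, i.e. that the normed-space analogue of Proposition~\ref{p1.o-bd-cv}.1 holds (its proof is the one already indicated there, modelled on Proposition~\ref{cs-p.cv-cont}.2), and that the value-translation $f\mapsto f-\tfrac12(a+b)$ genuinely leaves both the convexity and the $o$-Lipschitz inequality \eqref{def.o-Lip} untouched — both are immediate. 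The one genuinely new ingredient over the locally convex case is the replacement of Lemma~\ref{le.Mink-fcs} by the lattice identity \eqref{eq1.v-latt}(v), which is exactly why the excerpt singled out (v) for an explicit proof.
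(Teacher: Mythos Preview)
Your proposal is correct and follows essentially the same approach as the paper: reduce to local $o$-boundedness via Proposition~\ref{p1.o-bd-cv}, then rerun the geometric argument of Proposition~\ref{p2.cv-Lip-lcs} with the Minkowski-functional estimate of Lemma~\ref{le.Mink-fcs} replaced by the lattice inequality \eqref{eq1.v-latt}(v). The only cosmetic difference is that the paper does not translate values but simply notes that $a\le f(x)\le b$ already gives $|f(x)|\le |a|\vee|b|=:z$ by the same property~(v), and then states the core estimate as a separate lemma (Lemma~\ref{le1.o-Lip}) before invoking it.
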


  The proof will follow from an analog of Proposition \ref{p2.cv-Lip-lcs}.
  \begin{lemma}\label{le1.o-Lip}
Under the hypotheses of Theorem \ref{t.o-Lip}, if $R>0$ is such that $V=B[x_0,R]\subset \W$ and, for some $z\ge 0$ in $Y$,
\bequ\label{eq1.o-Lip}
|f(x)|\le z\,,\eequ
for all $x\in V$, then for every $0<r<R$
\bequ\label{eq2.o-Lip}
|f(x)-f(y)|\le \frac{2z}{R-r} \|x-y\|\,,\eequ
for all $x,y\in U:= B[x_0,r]$.
  \end{lemma}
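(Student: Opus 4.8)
\textbf{Proof strategy for Lemma \ref{le1.o-Lip}.} The plan is to mimic the proof of Proposition \ref{p2.cv-Lip-lcs} almost verbatim, replacing the seminorm $p$ on $X$ by the norm $\|\cdot\|$, the Minkowski functional $q$ on $Y$ by the absolute value $|\cdot|$ of the normed lattice, and the scalar bound ``$\le 2\beta(R-r)^{-1}p(x-y)$'' by the order bound ``$\le 2z(R-r)^{-1}\|x-y\|$''. The key point is that the lattice $Y$ supplies, via property (v) of \eqref{eq1.v-latt}, exactly the analogue of Lemma \ref{le.Mink-fcs}: if $a\le b\le c$ in $Y$, then $|b|\le |a|\vee |c|$. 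This is the substitute for the inequality $q(b)\le\max\{q(a),q(c)\}$ used in the locally convex proof, and it is the only structural fact about $Y$ that is needed.

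First I would fix $x,y\in U=B[x_0,r]$ with $x\ne y$; since $\|\cdot\|$ is a genuine norm, the degenerate case $\|x-y\|=0$ (i.e. $x=y$) cannot occur, so there is no analogue of ``Case I'' to handle. Thus I proceed directly with the argument parallel to Case II of Proposition \ref{p2.cv-Lip-lcs}: consider the continuous scalar function $\psi(t)=\|x-x_0+t(y-x)\|$, note $\psi(0)=\|x-x_0\|\le r<R$ and $\psi(1)=\|y-x_0\|\le r<R$, and use $\psi(t)\ge |t|\,\|y-x\|-\|x-x_0\|\to\infty$ to produce $a<0<1<b$ with $\psi(a)=\psi(b)=R$. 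Setting $u=x+a(y-x)$ and $v=x+b(y-x)$, the triangle inequality gives $\|u-x\|\ge R-r$ and $\|v-y\|\ge R-r$, exactly as in \eqref{eq.ineqs-psi}. Here $u,v$ lie on the line through $x,y$ but, because $\psi(a)=\psi(b)=R$, they lie in $V=B[x_0,R]$, so the hypothesis $|f(u)|\le z$, $|f(v)|\le z$, $|f(x)|\le z$ applies to all the relevant points.

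Next I would invoke Proposition \ref{cs-le.cv-Lip-lcs}.3, applied with the norm in place of the seminorm $p$ and with the ordering $a<0<1<b$ in the roles $t<t_0<t'$ (taking $t_0$ corresponding to $x$), to obtain the slope chain
\begin{equation*}
\frac{f(x)-f(u)}{\|x-u\|}\le\frac{f(y)-f(x)}{\|y-x\|}\le\frac{f(v)-f(y)}{\|v-y\|}\,.
\end{equation*}
From $|f(x)|\le z$, $|f(u)|\le z$ and $\|x-u\|\ge R-r$ one gets $\big|(f(x)-f(u))/\|x-u\|\big|\le 2z/(R-r)$, and similarly $\big|(f(v)-f(y))/\|v-y\|\big|\le 2z/(R-r)$. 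Applying property (v) of \eqref{eq1.v-latt} to the displayed chain — with the middle term $(f(y)-f(x))/\|y-x\|$ squeezed between the two outer terms — yields
\begin{equation*}
\left|\frac{f(y)-f(x)}{\|y-x\|}\right|\le\left|\frac{f(x)-f(u)}{\|x-u\|}\right|\vee\left|\frac{f(v)-f(y)}{\|v-y\|}\right|\le\frac{2z}{R-r}\,,
\end{equation*}
which, after multiplying through by $\|y-x\|$, is exactly \eqref{eq2.o-Lip}. No step here is a serious obstacle; the only thing one must be a little careful about is making sure the bound $|w|\le z$ for a vector $w$ correctly propagates to $|w/\|x-u\|\,|$ together with the factor $\|x-u\|\ge R-r$ in the denominator, i.e. that $|f(x)-f(u)|\le |f(x)|+|f(u)|\le 2z$ and hence $|f(x)-f(u)|\le \frac{2z}{R-r}\|x-u\|$ using $\|x-u\|\ge R-r$ — this is the vector-lattice analogue of the scalar estimate and uses only the triangle inequality (ii) of \eqref{eq1.v-latt} and the monotonicity of scalar multiplication by positive reals on the cone. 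Once Lemma \ref{le1.o-Lip} is established, Theorem \ref{t.o-Lip} follows exactly as Theorem \ref{cs-t.cv-Lip-lcs} follows from Proposition \ref{p2.cv-Lip-lcs}: Proposition \ref{p1.o-bd-cv}.1 propagates the upper $o$-boundedness from a neighborhood of $x_0$ to a neighborhood of any point, Proposition \ref{p1.o-bd-cv} (or the argument in its proof) upgrades one-sided to two-sided $o$-boundedness, and then Lemma \ref{le1.o-Lip} gives the local $o$-Lipschitz estimate on a ball around each point.
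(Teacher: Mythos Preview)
Your proposal is correct and follows essentially the same route as the paper's own proof: both reduce to Case~II of Proposition~\ref{p2.cv-Lip-lcs}, construct the endpoints $u,v$ on the line through $x,y$ with $\psi(a)=\psi(b)=R$, use the slope chain \eqref{eq2.slope}, and then replace Lemma~\ref{le.Mink-fcs} by the lattice inequality (v) of \eqref{eq1.v-latt} (exactly as the paper notes in Remark~\ref{re.o-Lip}). Your explicit justification of the estimate $|f(x)-f(u)|/\|x-u\|\le 2z/(R-r)$ via the triangle inequality $|f(x)-f(u)|\le|f(x)|+|f(u)|\le 2z$ and the scalar bound $\|x-u\|\ge R-r$ is a welcome clarification of a step the paper leaves implicit.
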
\begin{proof}
  The proof is similar to that of Proposition \ref{p2.cv-Lip-lcs}, so we only sketch it.

  Let $x\ne y$ in $U$. Since $\|x-y\|>0$ we have to consider only Case 2 of the corresponding proof. Like there, let $\alpha<0$ and $\beta>1$ be such that
  \bequs
  \|x-x_0+\alpha(y-x)\|=R= \|x-x_0+\beta(y-x)\|\,.\eequs

 Let   $\,u:= x+ a(y-x)\,$ and $\, v:=x+b(y-x).$  Putting $\,p(\cdot)=\norm$ in the inequalities \eqref{eq.ineqs-psi}, one obtains

\bequ\label{eq2.ineqs-psi}\begin{aligned}
&\|u-x\|\geq  R-r\quad\mbx{and}\quad
&\|v-y\| \geq R-r\,.
\end{aligned}\eequ

Appealing to \eqref{eq2.slope}, it follows
\bequ\label{eq1.Lip-slope}
\frac{f(x)-f(u)}{\|x-u\|}\le \frac{f(y)-f(x)}{\|y-x\|}\le \frac{f(v)-f(y)}{\|v-y\|}
\,.\eequ

  By hypothesis and the inequalities \eqref{eq2.ineqs-psi},
   \begin{align*}
  \frac{|f(x)-f(u)|}{\|x-u\|}\le \frac{2z}{R-r} \quad\mbx{ and }\quad \frac{|f(v)-f(y)|}{\|v-y\|}\le\frac{2z}{R-r}\,,
    \end{align*}
     so that
$$\frac{|f(y)-f(x)|}{\|y-x\|}\le \frac{2z}{R-r}\iff |f(y)-f(x)|\le \frac{2z}{R-r}\,\|y-x\|\,.$$
    \end{proof}

  \begin{proof}[Proof of Theorem \ref{t.o-Lip}] By Proposition \ref{p1.o-bd-cv} the function $f$ is locally $o$-bounded on $\W$. Therefore, for any $x\in \W$ there exist $R>0$ and $y\ge 0$ such that \eqref{eq1.o-Lip} holds. By Lemma \ref{le1.o-Lip} the function $f$ satisfies \eqref{eq2.o-Lip}, that is, it is $o$-Lipschitz on $B[x,r]$,  for   every $r\in(0,R)$.
  \end{proof}
  \begin{remark}\label{re.o-Lip} We have used some properties of the order relations in a vector lattice    (see Section \ref{S.OVS}).
For instance at the end of the proof of Lemma \ref{le1.o-Lip} we have used the following property
  $$
  u\le v\le w\;\Ra\; |v|\le|u|\vee |w|\,$$
 (see the proof following the relations \eqref{eq1.v-latt}),  applied to the inequalities  \eqref{eq1.Lip-slope}  .
  \end{remark}

\section{Equi-Lipschitz properties of families of continuous convex mappings}

Let $(X,P), (Y,Q)\,$ be real locally convex spaces, where $P,Q$ are
directed families of seminorms generating the topologies,
$\Omega$ an open
convex subset of $X$ and $F$ a family of functions from $\Omega $ to
$Y$. The family $F$ is called {\it equi-Lipschitz}  on  a subset $A$ of
$\Omega$ if for every $\, q\in Q$ there are  $p=p_q \in P$ and
a number $L_q\geq 0$ such that
\begin{equation}\label{e'3.1}
q(f(x)-f(y)) \leq L_q p(x-y)
\end{equation}
for all $\,x,y \in A\,$ and all $\,f\in F.$
The family
$F$ is called {\it locally equi-Lipschitz} on $\Omega$ if each point
$\,x\in \Omega\,$ has a neighborhood $\,U_x\subset \Omega\,$ such that
$F$ is equi-Lipschitz on $\,U_x.$

The family $F$ is called \emph{pointwise bounded} on $\Omega$ if, for every
$q\in Q,$
\begin{equation}\label{e'3.2}
\sup \{q(f(x)): f\in F\}   < \infty
\end{equation}
holds for each $ \, x\in \Omega$.

A \emph{barrel} in a locally convex space $(X,P)$ is an absorbing absolutely convex and
closed subset. The locally convex space $ X$  is called \emph{barrelled}  if each barrel is a neighborhood of 0 in X. Any Baire LCS, hence any
complete semimetrizable LCS, is a barrelled space. Notice that   there exist barrelled locally convex spaces and barrelled normed spaces  that are not Baire, see  \cite[p. 100]{Bonet} and  \cite{saxon74}, respectively. An example of an incomplete normed space that is  Baire  was given by Libor Vesel\'y, see\\

\emph{http://users.mat.unimi.it/users/libor/AnConvessa/Baire-incompleto.pdf}\\

The following result was proved in \cite{jou-thi84}. The proof given here is adapted from \cite{cobz01a}.

\begin{theo}\label{t.3.1}
Let $(X,P)$ be a barrelled locally convex space, $(Y,Q)$ a locally
convex space ordered by a normal  cone $C$ and $\Omega$ an open
convex subset of $X$.

If $F$ is a pointwise bounded family of continuous convex functions from
$\Omega$ to $Y$ then $F$ is locally equi-Lipschitz on $\Omega.$

Furthermore, the family $F$ is equi-Lipschitz on every compact subset
of $\Omega.$
\end{theo}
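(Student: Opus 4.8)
The plan is to reduce the statement, via the tools already developed for a single mapping, to one genuinely new ingredient: a barrel argument turning the pointwise bound into a uniform bound on a neighborhood. Throughout, by Theorem~\ref{t2.char-normal-cone} and the Note following Proposition~\ref{p2.order-tvs} I would assume that the seminorms in $Q$ are the Minkowski functionals of a base of absolutely convex $C$-full $0$-neighborhoods of $Y$; in particular each $q\in Q$ satisfies $q(v)\le\max\{q(u),q(w)\}$ whenever $u\le v\le w$ in $Y$ (Lemma~\ref{le.Mink-fcs}). Fix such a $q\in Q$ and a point $x_0\in\Omega$.

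First I would produce a neighborhood of $x_0$ on which $F$ is $q$-equibounded. Set
$$
A=\bigcap_{f\in F}\bigl\{x\in X:\ x_0\pm x\in\Omega,\ q\bigl(f(x_0+x)-f(x_0)\bigr)\le1,\ q\bigl(f(x_0-x)-f(x_0)\bigr)\le1\bigr\}.
$$
For $x_0+x,\,x_0-x\in\Omega$, the convexity inequality \eqref{e1} together with the midpoint estimate $f(x_0)\le\tfrac12 f(x_0+x)+\tfrac12 f(x_0-x)$ yields, for $0<t\le1$, the two-sided bound
$$
-t\bigl(f(x_0-x)-f(x_0)\bigr)\le f(x_0+tx)-f(x_0)\le t\bigl(f(x_0+x)-f(x_0)\bigr),
$$
whence, by Lemma~\ref{le.Mink-fcs}, $q(f(x_0+tx)-f(x_0))\le t\,\sup_{g\in F}\max\{q(g(x_0+x)-g(x_0)),q(g(x_0-x)-g(x_0))\}$, the supremum being finite by pointwise boundedness of $F$ at $x_0$, $x_0+x$, $x_0-x$. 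Since $\Omega-x_0$ is a $0$-neighborhood this shows that $A$ is absorbing; an analogous use of \eqref{e1}, of the midpoint estimate, and of Lemma~\ref{le.Mink-fcs} shows that $A$ is absolutely convex. Hence $\overline A$ is a barrel, and therefore a $0$-neighborhood because $X$ is barrelled. Choosing an absolutely convex $0$-neighborhood $U$ with $U\subset\overline A$ and $x_0\pm U\subset\Omega$ and passing to the limit along a net in $A$ (using the continuity of each $f\in F$ and of $q$), I would get $q(f(x_0+u)-f(x_0))\le1$ for all $u\in U$ and all $f\in F$; combined with pointwise boundedness at $x_0$ this gives a constant $M=M_q>0$ with $q(f(x))\le M$ for all $x\in x_0+U$ and all $f\in F$. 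As $x_0\in\Omega$ was arbitrary, $F$ is locally $q$-equibounded on $\Omega$.

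It then remains to pass from equiboundedness to equi-Lipschitzness, which I would do exactly as in the proof of Theorem~\ref{cs-t.cv-Lip-lcs}, now uniformly in $f\in F$. Given $x\in\Omega$, pick $p\in P$ and $R>0$ with $x+RB_p\subset\Omega$ and $q(f(z))\le M$ for all $z\in x+RB_p$ and $f\in F$. The argument of Proposition~\ref{p2.cv-Lip-lcs} uses only this uniform bound on the ball, the $p$-slope inequality \eqref{eq2.slope}, and Lemma~\ref{le.Mink-fcs}; applying it to each $f\in F$ yields $q(f(z)-f(z'))\le\frac{2M}{R-r}\,p(z-z')$ for all $z,z'\in x+rB_p$, all $f\in F$, and every $0<r<R$. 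Since $q\in Q$ was arbitrary, $F$ is locally equi-Lipschitz on $\Omega$. For a compact $K\subset\Omega$ I would repeat verbatim the covering argument from the second part of the proof of Theorem~\ref{cs-t.cv-Lip-lcs}: cover $K$ by finitely many open balls $U_i=x_i+r_iB'_{p_i}$ on which $F$ is equi-Lipschitz with seminorm $p_i$ and constant $L_i$, take $p\in P$ with $p\ge p_i$ for all $i$ and $L=\max_i L_i$, and, for distinct $x,y\in K$, points $u=x+a(y-x)\in U_i$, $v=x+b(y-x)\in U_j$ with $a<0<1<b$; then \eqref{eq2.slope} and Lemma~\ref{le.Mink-fcs} give $q(f(x)-f(y))\le L\,p(x-y)$ for all $f\in F$, the case $p(x-y)=0$ being handled as there.

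The main obstacle is the barrel step, specifically the verification that $A$ is absorbing, and this is exactly where the normality of $C$ enters essentially: it is the monotonicity of $q$ from Lemma~\ref{le.Mink-fcs} that converts the purely order-theoretic sandwich $-t(f(x_0-x)-f(x_0))\le f(x_0+tx)-f(x_0)\le t(f(x_0+x)-f(x_0))$, coming from convexity, into the genuine seminorm estimate $q(f(x_0+tx)-f(x_0))\le t\cdot\mathrm{const}\to0$, which forces $tx\in A$ for small $t>0$. Once $A$ is known to be absolutely convex and absorbing, barrelledness of $X$ does the rest, the continuity of the members of $F$ being needed only to upgrade the bound from $A$ to an honest $0$-neighborhood by passing to the closure.
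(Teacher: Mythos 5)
Your proposal is correct and follows essentially the same route as the paper: a barrel built from the pointwise bound and convexity yields local $q$-equiboundedness via barrelledness, after which Proposition~\ref{p2.cv-Lip-lcs} and the covering argument from Theorem~\ref{cs-t.cv-Lip-lcs} finish the proof uniformly in $f\in F$. The only (cosmetic) difference is that your barrel is defined by the symmetric seminorm condition $q(f(x_0\pm x)-f(x_0))\le 1$, with normality invoked up front through Lemma~\ref{le.Mink-fcs}, whereas the paper uses the one-sided order condition $f(x_0+u)-f(x_0)\in\tfrac12W-C$ and brings in $C$-fullness only at the end via $(W-C)\cap(W+C)=W$.
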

\begin{proof}
  Suppose  that the seminorms in
$Q$ are the Minkowski functionals of members of  a  neighborhood basis $\rond B$ of $\,0\in Y\,$ formed  of absolutely
convex $C$-full sets.

Let $\, x_0 \in \Omega,\; W\in \mathcal B\,$ and let  $\, q\in Q$ be
the Minkowski functional of the set $W\in\rond B$. We show that there
are $\,p\in P,\; R>0\,$ and $\,\beta >0\,$ such that
$\,V:= x_0+RB_p \subset \Omega\,$
and
\begin{equation}\label{e'3.3}
q(f(x)) \leq \beta
\end{equation}
for all $\,x\in V\,$ and all $\, f\in F.$
Taking into account Proposition \ref{p2.cv-Lip-lcs}, the relation (\ref{e'3.3}) yields
that, for any $\,0<r<R,\,$ we have
$$
q(f(x)-f(y))\leq \frac{2\beta}{R-r} p(x-y)
$$
for all $ \,x,y \in x_0+rB_p\,$ and all $\, f\in F.$

Let
$$
B=\{u\in X: x_0+u\in \Omega \; \mbox{and} \;
f(x_0+u)-f(x_0) \in \frac{1}{2}W-C \quad \forall f\in F\}
$$
A simple verification shows that $B$ is a convex subset of $X$.
We show that $B$ is also absorbing. To this end let $\,x\in X\,$ and
let $\,\alpha >0\,$ be such that $\, x_0+\alpha x \in \Omega\,$
(possible since the set $\Omega$ is open). For any $\, t,\; 0<t<1,\,x_0+t\alpha x\in\W$ (since $\W$ is convex)  and
$$
f(x_0+t\alpha x) =
f((1-t)x_0+ t(x_0+\alpha x)) \leq (1-t) f(x_0) + t f(x_0+\alpha x)
$$
implying
\begin{equation}\label{e'3.x}
f(x_0+t\alpha x)-f(x_0) \leq t(f(x_0+\alpha x) - f(x_0)).
\end{equation}
Since the family $F$ is pointwise bounded there exists
$\,t_0,\; 0<t_0<1,\,$ such that
$$
 t_0(f(x_0+ \alpha x)- f(x_0)) \in\frac{1}{2} W
$$
for all $\, f\in F,$  so that by (\ref{e'3.x}),
\begin{align*}
 &f(x_0+t_0\alpha x) -f(x_0)=\\ &=\left[f(x_0+t_0\alpha x) -f(x_0)- t_0(f(x_0+ \alpha x)- f(x_0))\right]+ t_0(f(x_0+ \alpha x)- f(x_0))\in -C+\frac{1}{2} W\,,
\end{align*}
 for all $\, f\in F,$ showing that $t_0\alpha x\in B$. Consequently, the set $\overline{B}\,$
is a barrel in $X$ and, since $X$ is barrelled, $\overline{B}\,$
is a neighborhood of $\, 0\in X.$

Take $\,R>0\,$ and $\,p\in P\,$ such that
$\, V:=x_0+RB_p \subset x_0+\overline{B}.$  For $\, f\in F\,$ and
$\, u \in RB_p\subset \ov B,\,$ there exists a net $(u_i)_{i\in I}$ in $B$ converging to $u$. The relations $f(x_0+u_i)-f(x_0)\in2^{-1}W-C$ and   the continuity of $f$ imply

$$
f(x_0+u)-f(x_0)=\lim_i(f(x_0+u_i)-f(x_0)) \in \mbox{cl}(\frac{1}{2} W-C) \subset W-C\,.$$ \quad

Similarly
$$
f(x_0-u) - f(x_0) \in    W-C.
$$

By the convexity of $f$
\begin{align*}
 2f(x_0)\le f(x_0+u)+f(x_0-u)\iff& f(x_0+u)-f(x_0)\ge f(x_0)-f(x_0-u) \\
 \Lra\;& f(x_0+u)-f(x_0)\in f(x_0)-f(x_0-u)+C\,.
\end{align*}

But then
$$
f(x_0+u)-f(x_0) \in -W+C+C=W+C\,.
$$
Therefore
$$
f(x_0+u)-f(x_0) \in (W-C)\cap (W+C)=W \subset B_q\,,
$$
i.e.
$$
q(f(x)-f(x_0)) \leq 1   \quad \forall x\in V \;\mbox{and}\; \forall f\in F\,.
$$

Hence
$$
q(f(x)) \leq 1+ q(f(x_0)) \leq 1+\sup \{ q(f(x_0)) : f\in F\}=:\beta.
$$
for all $\, x\in V\,$ and all $\, f\in F.$

The proof of the fact that $F$ is equi-Lipschitz on every compact subset of
$\Omega$ proceeds like in the case of one function, taking into account that, by \eqref{e'3.3},
we can add "for all $f\in F$" to each of the relations used in the proof of the
corresponding assertion of Theorem \ref{cs-t.cv-Lip-lcs}.
\end{proof}

\section{Convex functions on metrizable TVS}

In this section we shall discuss the Lipschitz properties of convex functions on metrizable TVS.

 As it was shown in \cite{co-mun76} continuous convex functions are also locally Lipschitz with respect to some  translation invariant metrics.

 For $0<p<1$ consider the linear space $\ell^p$ of all sequences $x=(x_k)$ of real numbers such that  $\sum_{k=1}^\infty|x_k|^p<\infty$.  The function

 \bequs
d(x,y)= \sum_{k=1}^\infty|y_k-x_k|^p
\eequs
is a translation invariant (i.e. $d(x+z,y+z)=d(x,y),\,\forall x,y,z\in X$) metric on $\ell^p$ generating a linear topology on $\ell^p$.

 \begin{prop}\label{cs-p.cv-Lip-elp}
 Let $\Om$ be an open convex subset of the space $\ell^p,\, 0< p<1.$ If $f:\Om\to \Real$ is continuous and convex, then $f$ is locally Lipschitz on $\Om.$
    \end{prop}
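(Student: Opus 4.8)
The plan is to mimic the proof of Theorem \ref{cs-t.cv-Lip-lcs}, replacing seminorm balls by $d$-balls in $\ell^p$ and exploiting the translation invariance of $d$ together with the one crucial feature of the $p$-norm for $0<p<1$: the functional $x\mapsto d(0,x)=\sum_k|x_k|^p$ is subadditive, so $d(x,z)\le d(x,y)+d(y,z)$, but it is \emph{not} positively homogeneous — instead $d(0,tx)=|t|^p\,d(0,x)$. This nonlinear scaling is harmless: along a line $z_t=x+t(y-x)$ one has $d(z_t,z_{t'})=|t-t'|^p\,d(0,y-x)$, so the slope-monotonicity machinery of Proposition \ref{cs-p.cv-cont} (with the ``seminorm'' replaced by $t\mapsto d(0,t(y-x))^{1/1}$, or more simply working directly with the parameter $t$) still applies to $\vphi(t)=f(x+t(y-x))$, which is an honest convex function of the real variable $t$.

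First I would record that $\ell^p$ is a metrizable topological vector space with the translation-invariant metric $d$, so that continuity of $f$ at a point is equivalent (via Proposition \ref{cs-p.cv-cont}, which only uses the TVS structure) to $f$ being bounded above on some $d$-ball, and hence $f$ is locally bounded above on all of $\Om$, and continuous on $\Om$. Then I would prove the analogue of Proposition \ref{p2.cv-Lip-lcs}: fix $x_0\in\Om$, choose $R>0$ with the closed $d$-ball $V=\{x:d(x,x_0)\le R\}\subset\Om$ and a bound $|f(x)|\le\beta$ on $V$ (possible since $f$ is continuous, after subtracting a constant). For $x,y\in B[x_0,r]$ with $0<r<R$ and $x\ne y$, set $\psi(t)=d(x_0,x+t(y-x))$. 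Using subadditivity and homogeneity of degree $p$ one gets $\psi(t)\ge|t|^p d(0,y-x)-d(x_0,x)$, so $\psi(t)\to\infty$ as $|t|\to\infty$, and by continuity of $\psi$ there are $a<0<1<b$ with $\psi(a)=\psi(b)=R$; the reverse triangle inequality gives $d(u,x)\ge R-r$ and $d(v,y)\ge R-r$ where $u=x+a(y-x)$, $v=x+b(y-x)$. Applying the slope inequality \eqref{eq2.slope} to the convex function $\vphi(t)=f(x+t(y-x))$ — here the relevant ``$p$'' in Proposition \ref{cs-le.cv-Lip-lcs} is the map $z\mapsto d(x_0,z)$ restricted to the line, which along the line behaves like $|t-t'|^p d(0,y-x)$, an increasing function of $|t-t'|$, so the monotonicity argument goes through verbatim — one obtains
\begin{equation*}
\frac{\vphi(0)-\vphi(a)}{-a}\le\frac{\vphi(1)-\vphi(0)}{1}\le\frac{\vphi(b)-\vphi(1)}{b-1},
\end{equation*}
and translating back to $d$-distances via $d(x,u)=|a|^p d(0,y-x)$ etc., together with $|f|\le\beta$ on $V$ and $d(x,u),d(v,y)\ge R-r$, yields
\begin{equation*}
|f(x)-f(y)|\le\frac{2\beta}{R-r}\,d(x,y).
\end{equation*}
Since $Y=\Real$ here, the ``$C$-full Minkowski functional'' step is replaced by the elementary estimate $|f(x)-f(y)|\le\max\{|f(x)-f(u)|,|f(v)-f(y)|\}$ coming from the fact that $f(x)-f(y)$ lies between those two quantities (as in the scalar case), so Lemma \ref{le.Mink-fcs} is not needed.

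The main obstacle — really the only subtle point — is to make sure the slope-monotonicity of Proposition \ref{cs-le.cv-Lip-lcs}(3) is being invoked correctly when the ``seminorm'' $p$ is replaced by the non-homogeneous gauge $d(x_0,\cdot)$: what one actually needs is merely that for three collinear points $z_{t_1},z_{t_2},z_{t_3}$ with $t_1<t_2<t_3$ the quantity $d(x_0,z_{t})$ is ratio-comparable in the right way, i.e. that $d(z_{t_i},z_{t_j})=|t_i-t_j|^p\,d(0,y-x)$, and then one should run the slope argument directly in terms of the parameter $t$ (using that $\vphi(t)=f(x+t(y-x))$ is a genuine convex function of $t\in\Real$) rather than in terms of $d$, converting to $d$-distances only at the very end. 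Once this bookkeeping is done the proof is identical in structure to that of Theorem \ref{cs-t.cv-Lip-lcs}; the passage from the local Lipschitz estimate on each ball $B[x_0,r]$ to the statement ``$f$ is locally Lipschitz on $\Om$'' is then immediate since every point of $\Om$ has such a ball around it.
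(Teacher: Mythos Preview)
Your argument has a genuine gap at the ``translating back to $d$-distances'' step. From the $t$-slope inequality
\[
\frac{\varphi(0)-\varphi(a)}{-a}\le \varphi(1)-\varphi(0)\le \frac{\varphi(b)-\varphi(1)}{b-1}
\]
together with $|f|\le\beta$ on $V$ you obtain $|f(y)-f(x)|\le \dfrac{2\beta}{b-1}$. Now $d(v,y)=(b-1)^p\,d(x,y)\ge R-r$ gives only $b-1\ge\bigl((R-r)/d(x,y)\bigr)^{1/p}$, hence
\[
|f(y)-f(x)|\le 2\beta\left(\frac{d(x,y)}{R-r}\right)^{1/p},
\]
which is a H\"older estimate of order $1/p>1$, not the Lipschitz bound $\dfrac{2\beta}{R-r}\,d(x,y)$ you claim. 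The difficulty is precisely the one you flag but do not resolve: because $d(z_t,z_{t'})=|t-t'|^p\,d(x,y)$ scales with exponent $p\ne 1$, the $t$-slope and the $d$-slope are \emph{not} proportional, and the $d$-slope $\varphi(t)/|t|^p$ need not be monotone (e.g.\ take $\varphi(t)=-t$). Your approach can be rescued --- since $d(x,y)\le 2r$ one has $d(x,y)^{1/p}\le (2r)^{1/p-1}d(x,y)$, yielding Lipschitz constant $2\beta(2r)^{1/p-1}(R-r)^{-1/p}$ --- but this extra step is essential and absent from your write-up.

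The paper sidesteps the issue entirely by a different, more direct construction: rather than pushing $u,v$ out to the boundary $\psi=R$ (which forces the awkward exponent $1/p$ when solving for $b-1$), it fixes the \emph{parameter} value $\lambda=\dfrac{r}{2d(x,y)}$ and sets $z=y+\lambda(y-x)$. Then $d(z,y)=\lambda^p d(x,y)=(r/2)^p d(x,y)^{1-p}\le r/2$, so $z$ lies in the larger ball, while $y=\dfrac{1}{1+\lambda}z+\dfrac{\lambda}{1+\lambda}x$ is a convex combination with coefficient $\dfrac{1}{1+\lambda}=\dfrac{2d(x,y)}{2d(x,y)+r}$, which is \emph{linear} in $d(x,y)$. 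Convexity then gives $f(y)-f(x)\le \dfrac{2d(x,y)}{2d(x,y)+r}(f(z)-f(x))\le\dfrac{4a}{r}\,d(x,y)$ directly. The point is that by choosing $\lambda$ in terms of $d(x,y)$ rather than choosing the auxiliary point at a prescribed $d$-distance, the nonlinear scaling never enters the final estimate.
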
\begin{proof}
      For $x_0\in\Om$ there exists $r>0$ and $a>0$ such that $|f(x)|\le a$ for all $x\in U,$ where $U:=\{x\in \ell^p : d(x_0,x)\le r\}\subset \Om$ is a neighborhood of $x_0.$  Let $V:=\{x\in \ell^p : d(x_0,x)\le r/4\}\subset U$.
For $x,y\in V,\, x\ne y,$ we have $d(x,y)\le r/2$ and
\begin{align*}
  &d\left(\frac r{2d(x,y)}\,(y-x),0\right) =\left(\frac r{2d(x,y)}\right)^p d(y-x,0)\\
  &=\left(\frac r{2d(x,y)}\right)^p d(x,y)=\left(\frac r{2}\right)^p \left(d(x,y)\right)^{1-p}\le\frac r2\,.
\end{align*}

     The element $z:=y+r\left(d(x,y)\right)^{-1}(y-x)$ belongs to $U$ because
      $$
      d(z-x_0,0)\le d(y-x_0,0)+d\left(\frac r{2d(x,y)}(y-x),0\right)\le\frac r4 + \frac r2<r.
      $$

It follows
$$
y=\frac{2d(x,y)}{2d(x,y)+r}z+ \frac{r}{2d(x,y)+r}x\,,
$$
so that, by the convexity of $f$,
$$
f(y)\le\frac{2d(x,y)}{2d(x,y)+r}f(z)+ \frac{r}{2d(x,y)+r}f(x)\,,
$$
implying
$$
f(y)-f(x)\le\frac{2d(x,y)}{2d(x,y)+r}\left(f(z)-f(x)\right)\le \frac{4a}{2d(x,y)+r}\,d(x,y)\le \frac{4a}{r}\,d(x,y)
$$

By symmetry
$$
f(x)-f(y)\le\frac{4a}{r}\,d(x,y)\,,
$$
so that
$$
|f(y)-f(x)|\le\frac{4a}{r}\,d(x,y)\,.
$$

Consequently $f$ is Lipschitz on $V$ with $L=(4a)/r.$
      \end{proof}

\begin{remark}
  The dual of the space $\ell^p,\,0<p<1,$ is the space $\ell^\infty$ of all bounded sequences, the duality $\alpha\mapsto \vphi_\alpha \in\left(\ell^p\right)^*$ for  $\, \alpha=(\alpha_k)\in\ell^\infty,$ being realized by the formula
  $$
  \vphi_\alpha(x)=\sum_{k=1}^\infty\alpha_kx_k,\quad\mbox{for}\;\;x=(x_k)\in\ell^p\,,
  $$
(see \cite[p. 110]{Maddox1}).

  Consequently, for $0<p<1$  every space $\ell^p$ contains a good supply of nonempty open convex sets and non identically null  continuous convex functions.

  In contrast, $\left(L^p[0,1]\right)^*=\{0\}$ for every $0<p<1,$ so that $L^p[0,1]$ does not contain nonempty open convex subsets and the only continuous convex function on $L^p[0,1]$ is $f\equiv 0$ (see \cite[\S1.47]{Rudin-FA}).
  \end{remark}

      A similar result holds in metrizable LCS. Let $\left(X,\tau\right)$ be a Hausdorff LCS with the topology generated by the countable directed family $(p_n)_{n\in\Nat}$ of seminorms. It is known that the topology of $X$ is metrizable and
\bequ\label{eq1.cv-Lip-LCS-m}
      d(x,y)=\sum_{n=1}^\infty\frac1{2^n}\cdot\frac{p_n(x-y)}{1+p_n(x-y)}, \quad x,y\in X\,,
\eequ
is a translation invariant metric on $X$ generating the topology $\tau$.
\begin{prop}\label{cs-p.cv-Lip-LCS-m} Let $X$ be a metrizable  LCS and $\Om$ an open convex subset of $X$. If
$f:\Om\to \Real$ is a continuous convex function, then $f$ is locally Lipschitz on $\Om$ with respect to the metric \eqref{eq1.cv-Lip-LCS-m}
  \end{prop}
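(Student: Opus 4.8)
The plan is to combine Theorem~\ref{cs-t.cv-Lip-lcs} (applied with $Y=\Real$ ordered by the normal cone $\Real_+$, whose order intervals are bounded) with an elementary \emph{local} comparison between the metric $d$ and a single seminorm $p_N$. First I would fix $x_0\in\Om$ and apply the local Lipschitz part of Theorem~\ref{cs-t.cv-Lip-lcs}: since $f$ is continuous and convex, there exist an index $N\in\Nat$, a constant $L>0$ and $\rho>0$ such that the ball $U:=\{x : p_N(x-x_0)\le\rho\}$ is contained in $\Om$ and
$$|f(x)-f(y)|\le L\,p_N(x-y)\qquad\text{for all }x,y\in U.$$
Here I use that the directed family $P=(p_n)$ generates $\tau$, so the seminorm furnished by the theorem (coming from the continuity of $f$ at $x_0$) may be taken to be one of the $p_n$.

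Next I would pick a $d$-ball that is simultaneously small and contained in $U$. Since $d$ generates the topology $\tau$, there is $r\in(0,2^{-N})$ such that $V:=\{x\in X : d(x,x_0)<r\}\subset U$. The role of the requirement $r<2^{-N}$ is that, keeping only the $N$-th term of the series defining $d$, for $x\in V$ one has
$$\frac1{2^N}\cdot\frac{p_N(x-x_0)}{1+p_N(x-x_0)}\le d(x,x_0)<r\ \Longrightarrow\ p_N(x-x_0)<\frac{2^Nr}{1-2^Nr}=:M_0,$$
so that, by the triangle inequality for $p_N$, $p_N(x-y)\le 2M_0=:M$ for all $x,y\in V$; in other words $p_N$ is bounded on differences of points of $V$.

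The third step is the comparison itself. Since $t\mapsto t/(1+t)$ is increasing on $[0,\infty)$ and $p_N(x-y)\le M$, for $x,y\in V$ we get
$$d(x,y)\ge\frac1{2^N}\cdot\frac{p_N(x-y)}{1+p_N(x-y)}\ge\frac1{2^N(1+M)}\,p_N(x-y),$$
hence $p_N(x-y)\le 2^N(1+M)\,d(x,y)$. Combining this with the Lipschitz estimate on $U\supset V$ yields
$$|f(x)-f(y)|\le L\,2^N(1+M)\,d(x,y)\qquad\text{for all }x,y\in V,$$
so $f$ is Lipschitz on $V$ with respect to $d$; since $x_0\in\Om$ was arbitrary, $f$ is locally Lipschitz on $\Om$ with respect to the metric \eqref{eq1.cv-Lip-LCS-m}.

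I do not expect a genuine obstacle. The one point to keep in mind is that $d$ is a \emph{bounded} metric, so a global Lipschitz estimate in terms of $d$ is impossible; the statement is necessarily local, and the localization is precisely what makes the truncated quantity $p_N/(1+p_N)$ comparable (up to a constant depending on the neighborhood) to $p_N$ itself. One must also ensure that the very neighborhood $V$ on which one wants the $d$-Lipschitz estimate lies inside the neighborhood on which the seminorm-Lipschitz estimate of Theorem~\ref{cs-t.cv-Lip-lcs} holds, which is guaranteed by the choice $V\subset U$.
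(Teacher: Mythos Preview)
Your proposal is correct and follows essentially the same route as the paper: apply Theorem~\ref{cs-t.cv-Lip-lcs} to obtain a seminorm Lipschitz estimate $|f(x)-f(y)|\le L\,p_N(x-y)$ on a neighborhood $U$ of $x_0$, restrict to a smaller $d$-ball $V\subset U$ on which $p_N(x-y)$ is bounded, and use that bound together with the inequality $d(x,y)\ge 2^{-N}\,p_N(x-y)/(1+p_N(x-y))$ to convert the seminorm estimate into a $d$-Lipschitz estimate. The only cosmetic difference is how the bound on $p_N(x-y)$ is obtained: the paper simply takes $V$ inside $\{x:p_m(x-x_0)\le 1\}$ (so $p_m(x-y)\le 2$ and the final constant is $3\cdot 2^m L_m$), whereas you derive the $p_N$-bound from the condition $r<2^{-N}$ via the $N$-th term of the series for $d$; both are equally valid.
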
     \begin{proof}
  Let $x_0\in \Om.$ By Theorem \ref{cs-t.cv-Lip-lcs} there exists a convex neighborhood $U\subset \Om$ of $x_0,\,$ $m\in \Nat$ and $L_m>0$ such that
  \bequ\label{eq2.cv-Lip-LCS-m}
  |f(x)-f(y)|\le L_mp_m(x-y)\,,
  \eequ
  for all $x,y\in U$. Let $r>0$ be such that $V:=\{x\in X : d(x_0,x)\le r\}\subset U\cap \{x\in X : p_m(x-x_0)\le 1\}.$
 Then, for any $x,y\in V,\, p_m(x-y)\le 2$ and
 \begin{align*}
   |f(x)-f(y)|\le& L_mp_m(x-y)=2^mL_m(1+p_m(x-y))\cdot\frac1{2^m}\cdot\frac{p_m(x-y)}{1+p_m(x-y)}\\
   \le& 3\cdot L_m\cdot 2^m\cdot\sum_{k=1}^\infty\frac1{2^k}\cdot\frac{p_k(x-y)}{1+p_k(x-y)}= L\cdot d(x,y)\,,
 \end{align*}
 where $L:=3\cdot L_m\cdot 2^m$
\end{proof}

\begin{remark}
  The fact that the metric $d$ is translation invariant is essential for  the validity of   Propositions \ref{cs-p.cv-Lip-elp} and \ref{cs-p.cv-Lip-LCS-m}.
\end{remark}

Indeed, on  $X=\Real$   the metric $d(x,y)=|x^3-y^3|,\, x,y\in\Real,$ generates the usual topology on $\Real.$
The function $f(x)=x,\, x\in\Real,$ is continuous and convex on $\Real$, but it is not Lipschitz around 0, because
$$
|f(x)-f(y)|=\frac 1{x^2+xy+y^2}\cdot|x^3-y^3|\quad \mbox{for}\;\; (x,y)\ne (0,0)\,,
$$
and
$$
\lim_{(x,y)\to (0,0)}\; \frac 1{x^2+xy+y^2} = +\infty\,.
$$


\begin{thebibliography}{99}
\small{

\bibitem{Alipr-Cones}
C.~D. Aliprantis and R.~Tourky, \emph{Cones and duality}, Graduate Studies in
  Mathematics, vol.~84, American Mathematical Society, Providence, RI, 2007.

\bibitem{Hitchhick}
C.~D. Aliprantis and K.~C. Border, \emph{Infinite-dimensional analysis. A
  hitchhiker's guide}, Studies in Economic Theory, vol.~4, Springer-Verlag,
  Berlin, 1994.

 \bibitem{zali16}
V. Anh Tuan, Ch. Tammer and C. Z\u{a}linescu,
 {\it  The Lipschitzianity of convex vector and set-valued functions}, TOP \textbf{24} (2016), no. 1, 273--299.

\bibitem{bor81}
 J. M. Borwein,
{\it Convex relations in analysis and optimization,}
in Generalized Convexity, Academic Press New York 1981,
pp. 336-377.

\bibitem{bor82}
 \bysame,
{\it Continuity and differentiability properties of convex operators,}
Proc. London Math. Soc.  {\bf 44} (1970), 420-444.

\bibitem{bor84}
\bysame,
{\it Subgradients of convex operators,}
Operationsforsch. Statist. Ser. Optimization {\bf 15} (1984),  179-191.

\bibitem{Breck}
W.~W. Breckner, \emph{Rational {$s$}-convexity. {A} generalized
  {J}ensen-convexity}, Presa Universitar\u a Clujean\u a, Cluj-Napoca, 2011.

\bibitem{b-g-t96}
 W. W. Breckner, A. G\" opfert and T. Trif,
{\it Characterizations of ultrabarrellednes and barrelledness involving
singularities of families of convex mappings,} Manuscripta Math. {\bf 91} (1996), 17-34.

\bibitem{br-tr99}
W. W. Breckner and T. Trif,
{\it Equicontinuity and H\" older equicontinuity of generalized convex
mappings,} New Zealand J. Math. {\bf 28} (1999), 155-170.

\bibitem{vesely13}
A. Carioli, L. Vesel\'y,
{\it Normal cones and continuity of vector-valued convex functions,}
J. Convex Anal. {\bf 20} (2013), no. 2, 495--500.

\bibitem{cob79}
 S. Cobza\c s,
{\it On the Lipschitz properties of convex functions,}
Mathematica {\bf 21} (1979),  123-125.

\bibitem{cobz01a}
 \bysame, {\it Lipschitz properties for families of convex mappings},
  Inequality theory and applications. {V}ol. {I}, Nova Sci. Publ., Huntington,
  NY, 2001, pp.~103--112.

  \bibitem{co-mun76}
 S. Cobza\c s and  I. Muntean,
{\it Continuous and locally Lipschitz convex functions,}
Mathematica {\bf 18} (1976), 41-51.

\bibitem{Ekl-Tem74}
I. Ekeland and R. Temam,
{\it Analyse convexe et probl\` emes variationnels},
Dunod, Paris 1974.

\bibitem{Zali-Gopf}
A.~G{\"o}pfert, H.~Riahi, Ch. Tammer and C.~Z{\u{a}}linescu, \emph{Variational
  methods in partially ordered spaces}, CMS Books in Mathematics/Ouvrages de
  Math\'ematiques de la SMC, 17, Springer-Verlag, New York, 2003.

\bibitem{Hardy-Lit-Pol}
G.~H. Hardy, J.~E. Littlewood and G.~P{\'o}lya, \emph{Inequalities}, Cambridge
  Mathematical Library, Cambridge University Press, Cambridge, 1988, Reprint of
  the 1952 edition.

\bibitem{jou-thi84}
M. Jouak and L. Thibault,
{\it Equicontinuity of families of concave-convex operators,}
Canad. J. Math. {\bf 36} (1984), 883-898.

\bibitem{kos76}
 P. Kosmol,
{\it Optiemierung konvexer Funktionen mit Stabilit\" atsbetrachtungen,}
Dissertationes Math. Vol. 140, pp. 1-42, 1976.

\bibitem{Kos91}
\bysame,
{\it Optimierung und Approximation,} Walter de Gruyter, Berlin-New York 1991.

\bibitem{k-s-w79}
 P. Kosmol, W. Schill and M. Wriedt,
{\it Der Satz von Banach-Steinhaus f\" ur Konvexe Operatore,}
Arch. Math.  33 (1979),  564-569.

\bibitem{KK95}
 A. G. Kusraev and S. S. Kutateladze,
{\it Subdifferentials: Theory and Applications,}
Kluwer A. P. Dordrecht-Boston-London 1995.

\bibitem{Maddox1}
I.~J. Maddox, \emph{Elements of functional analysis}, Cambridge University
  Press, London-New York, 1970, (second edition, 1988).

\bibitem{ma-mu96}
B. Marco and J. A. Murillo,
{\it Locally Lipschitz and convex functions,}
Mathematica {\bf 38} (1996), 121-131.

\bibitem{nem86}
 A. B. N\' emeth,
{\it On the subdifferentiablity of convex operators,}
J. London. Math. Soc. Vol. {\bf 34} (1986),  592-598.

\bibitem{neu85}
 M. Neumann,
{\it Uniform boundedness and closed graph theorems for convex operators,}
Math. Nachr. {\bf 120} (1985),  113-125.

\bibitem{papag83a}
N.~S. Papageorgiou, \emph{Nonsmooth analysis on partially ordered vector   spaces.
{I}. {C}onvex case},   Pacific J. Math. \textbf{107} (1983), no.~2,   403--458.

\bibitem{papag83b}
\bysame, \emph{Nonsmooth analysis on partially ordered vector spaces.
{II}.   {N}onconvex case, {C}larke's theory}, Pacific J. Math. \textbf{109} (1983),   no.~2, 463--495.

  \bibitem{Peressini}
A.~L. Peressini, \emph{Ordered topological vector spaces}, Harper \& Row,
  Publishers, New York-London, 1967.

\bibitem{Bonet}
P.~P{\'e}rez~Carreras and J.~Bonet, \emph{Barrelled locally convex spaces},
  North-Holland Mathematics Studies, vol. 131, North-Holland Publishing Co.,
  Amsterdam, 1987, Notas de Matem{\'a}tica [Mathematical Notes], 113.

  \bibitem{Rudin-FA}
W.~Rudin, \emph{Functional analysis}, second ed., International Series in Pure
  and Applied Mathematics, McGraw-Hill, Inc., New York, 1991.


\bibitem{saxon74}
S.~A. Saxon, \emph{Some normed barrelled spaces which are not {B}aire}, Math.
  Ann. \textbf{209} (1974), 153--160.

\bibitem{Schaef-TVS}
H.~H. Schaefer and M.~P. Wolff, \emph{Topological vector spaces}, second ed.,
  Graduate Texts in Mathematics, vol.~3, Springer-Verlag, New York, 1999.
}

\bibitem{zali78}
C.~Z{\u{a}}linescu, \emph{A generalization of the {F}arkas lemma and
  applications to convex programming}, J. Math. Anal. Appl. \textbf{66} (1978),
  no.~3, 651--678.

  \bibitem{Zali-CvAn}
\bysame, \emph{Convex analysis in general vector spaces}, World Scientific
  Publishing Co., Inc., River Edge, NJ, 2002.
\end{thebibliography}
\end{document}